\newtheorem{theorem}{Theorem}[section]
\newtheorem{proposition}[theorem]{Proposition}
\newtheorem{lemma}[theorem]{Lemma}
\newtheorem{corollary}[theorem]{Corollary}
\newtheorem{example}[theorem]{Example}
\theoremstyle{definition}
\newtheorem{definition}[theorem]{Definition}
\theoremstyle{remark}
\newtheorem{remark}[theorem]{Remark}
\DeclareFontFamily{OT1}{pzc}{}
\DeclareFontShape{OT1}{pzc}{m}{it}{<-> s * [1.10] pzcmi7t}{}
\DeclareMathAlphabet{\mathpzc}{OT1}{pzc}{m}{it}
\numberwithin{equation}{section}
\def\ca{{\mathcal A}}
\def\cb{{\mathcal B}}
\def\cd{{\mathcal D}}
\def\ch{{\mathcal H}}
\def\cl{{\mathcal L}}
\def\cam{{\mathcal M}}
\def\car{{\mathcal R}}
\def\cu{{\mathcal U}}
\def\cz{{\mathcal Z}}
\def\bc{{\mathbb C}}
\def\bn{{\mathbb N}}
\def\br{{\mathbb R}}
\def\bt{{\mathbb T}}
\def\bz{{\mathbb Z}}
\def\bno{\bn\cup\{0\}}
\def\a{\alpha}
\def\b{\beta}
\def\g{\gamma}        \def\G{\Gamma}
\def\d{\delta}        
\def\eps{\varepsilon}
\def\th{\vartheta}
\def\l{\lambda}       
\def\m{\mu}
\def\r{\rho}
\def\s{\sigma}        
\def\t{\tau}
\def\f{\varphi}
\def\c{\chi}
\def\o{\omega}        \def\O{\Omega}
\def\zcl{\mathpzc{l}} 
\def\zcp{\mathpzc{p}} 
\newcommand{\norm}[1]{\left\Vert#1\right\Vert}
\newcommand{\ceil}[1]{\lceil #1 \rceil}
\DeclareMathOperator{\diag}{diag}
\DeclareMathOperator{\Aut}{Aut}
\newcommand{\res}{\textrm{res}}
\newcommand{\ad}{\textrm{ad}}
\DeclareMathOperator{\tr}{tr}
\def\molt{r} 
\def\som{d} 
\begin{document}

\title{Spectral triples for noncommutative solenoidal spaces from self-coverings}
\author{
Valeriano Aiello$^\dag$, Daniele Guido$^\sharp$, Tommaso Isola$^\sharp$\footnote{The  authors were partially 
supported by MIUR and GNAMPA.  E-mail: 
valerianoaiello@gmail.com, guido@mat.uniroma2.it, isola@mat.uniroma2.it}
\\
$^\dag$ Dipartimento di Matematica e Fisica,\\ Universit\`a Roma Tre,\\ Largo S. Leonardo Murialdo 1, 00146 Roma, Italy.\\
$^\sharp$ Dipartimento di Matematica,\\ Universit\`a di Roma ``Tor
Vergata'',\\ I--00133 Roma, Italy.}

\date{\today}
\maketitle

\begin{abstract}
Examples of noncommutative self-coverings are described, and spectral triples on the base space are extended to spectral triples on the inductive family of coverings, in such a way that the covering projections are locally isometric. Such triples are shown to converge, in a suitable sense, to a semifinite spectral triple on the direct limit of the tower of coverings, which we call noncommutative solenoidal space. Some of the self-coverings described here are given by the inclusion of the fixed point algebra in a C$^*$-algebra acted upon by a finite abelian group. In all the examples treated here, the noncommutative solenoidal spaces have the same metric dimension and volume as  on the base space, but are not quantum compact metric spaces, namely the pseudo-metric induced by the spectral triple does not produce the weak$^*$ topology on the state space. 

Keywords: spectral triples; inductive limits; solenoidal spaces; self-coverings.
\end{abstract}

\tableofcontents

\section{Introduction}

Given a noncommutative self-covering consisting of a C$^*$-algebra with a unital injective endomorphism $(\ca,\a)$, we study the possibility of extending a spectral triple on $\ca$ to a spectral triple on the inductive limit $C^*$-algebra, where,  as in \cite{Cuntz}, the inductive family associated with the endomorphism $\a$ is
\begin{align}\label{ind-lim-diag}
\ca_0\stackrel{\a}{\longrightarrow}\ca_1\stackrel{\a}{\longrightarrow}\ca_2\stackrel{\a}{\longrightarrow}\ca_3\dots,
\end{align}
all the $\ca_n$ being isomorphic to $\ca$. The algebra $\ca_n$ may be considered as the $n$-th covering of the algebra $\ca_0$ w.r.t. the endomorphism $\a$.
As a remarkable byproduct, all the spectral triples we construct on the inductive limit $C^*$-algebra are semifinite spectral triples. 

Let us recall that the first notion of type II noncommutative geometry appeared in \cite{CoCu}, where semifinite Fredholm modules were introduced, a notion then  generalized in \cite{CaPhi1}, see also \cite{CaPhi2}, with that of semifinite unbounded Fredholm module. The latter is essentially the same definition as that of von Neumann spectral triples of \cite{BeFa}, where
some previous constructions \cite{Atiy,Conn,Shub,Lesc} were reinterpreted as examples of such concept. In the same period, \cite{PaRe} considered semifinite spectral triples for graph algebras and posed the problem of exhibiting more examples of the kind, which was done in \cite{PaReSi} using $k$-graph algebras  and in \cite{AGNe} inspired by quantum gravity. 
Further examples have been considered in \cite{CaGaReSu,Wahl}.

In the cases we analyze, it is possible to construct natural spectral triples on the C$^*$-algebras $\ca_n$ of the inductive family, which converge,  in a suitable sense, to a triple on the inductive limit, and the latter triple is indeed semifinite.

The leading idea is that of producing geometries on each of the noncommutative coverings $\ca_n$ which are locally isomorphic to the geometry on the original noncommutative space $\ca$.  This means in particular that the covering projections should be local isometries or, in algebraic terms, that the noncommutative metrics given by the Lip-norms associated with the Dirac operators via $L_n(a)=\|[D_n,a]\|$ (cf. \cite{ConnesBook,RieffelMetrics}) should be compatible with the inductive maps, {\it i.e.}
$L_{n+1}(\a(a))=L_n(a)$, $ a\in\ca_n$. In one case, this property will be weakened to the existence of a finite limit for the sequences $L_{n+p}(\a^p(a))$, $ a\in\ca_n$.

The above request  produces two related effects. On the one hand, the noncommutative coverings are metrically larger and larger, so that their radii diverge to infinity, and {\it the inductive limit is topologically compact} (the C$^*$-algebra has a unit) {\it but not totally bounded} (the metric on the state space does not induce the weak$^*$-topology). On the other hand, the spectrum of the Dirac operator becomes more and more dense in the real line, so that the resolvent of the limiting Dirac operator cannot be compact, being indeed $\tau$-compact w.r.t. a suitable trace, and thus producing a {\it semifinite spectral triple on the inductive limit}.

Pursuing this idea means also that we see the elements of the  inductive family in a more geometric way, namely as distinct (though isomorphic) algebras of \textquotedblleft functions" on noncommutative coverings, and the inductive maps as embeddings of a sub-algebra into an algebra of \textquotedblleft less periodic" functions. In this sense the inductive limit is a noncommutative version of the solenoidal spaces in \cite{McCord}, see also the noncommutative solenoids in \cite{LP2,LP}.

The first four sections are devoted to the study of {\it noncommutative regular (self-)coverings with finite abelian group}, namely in particular of a C$^*$-algebra $\ca_1$ acted upon by a finite abelian group $\Gamma$ whose fixed point algebra $\ca_0$ is isomorphic to $\ca_1$. A further property, which we call regularity, requires that the eigenspaces of $\ca_1$ w.r.t. the action of $\Gamma$ contain invertible elements. This turns out to imply that $\ca_1$ can be seen as a subalgebra of a matrix algebra on $\ca_0$, and in this way the algebras $\ca_n$ forming the inductive family described above are naturally embedded into $\ca_0\otimes M_r(\bc)^{\otimes n}$, $r=|\Gamma|$. The resulting embedding of the inductive limit into $\ca_0\otimes \mathrm{UHF}(r^\infty)$ provides the semifinite environment for the spectral triple on the inductive limit.
The regularity assumption  also implies that the ``can" map is an isomorphism (cf. Section 1.4), namely the regularity property of the covering according to \cite{BDH}.

In Sections 2, 3 and 4 we study  coverings of the torus and generalizations to noncommutative tori and to crossed products with $\bz^n$, based on a non-degenerate  integer-valued matrix $B$. This implies in particular that the regularity property holds and the invertible elements in the eigenspaces of the action $\Gamma$ can be chosen in terms of a section $s$ of the exact sequence 
\begin{equation}\label{exactseq}
0\to \bz^p\to(B^T)^{-1}\bz^p\to (B^T)^{-1}\bz^p/\bz^p\to 0,
\end{equation}
 cf. equation \eqref{map-sigma}. The choice of a particular section  plays no role in the definition of the Dirac operator and of the Lip-norm on the $n$-th covering quantum spaces, however such section enters  the formulas for the identification of the covering algebras as algebras of matrices on the base algebra. It turns out that  choosing ``minimal'' sections and requiring the matrix $B$ to be purely expanding guarantees a suitable convergence of the Dirac operators on the $n$-th covering to a Dirac operator on the inductive limit.
 
As mentioned above, the first example of regular covering is described in Section 2, and is indeed a classical covering, namely the self-covering of the $p$-torus $\br^p/\bz^p$ given by a non-degenerate matrix $B\in M_p(\bz)$. We assume $\det B\ne\pm1$ to avoid the automorphism case. The covering map is the projection $\br^p/B\bz^p\longrightarrow\br^p/\bz^p$, the group of deck transformations being $\Gamma=\bz^p/B\bz^p$. The corresponding embedding for the algebras is  $C(\br^p/\bz^p)\hookrightarrow C(\br^p/B\bz^p)$, the group $\bz^p/B\bz^p$ acts on the larger algebra having the smaller as fixed point algebra. As mentioned before, the algebras $\ca_n$, consisting of continuous functions on the $n$-th covering, can be represented as matrices on $\ca_0$, namely embed into $\ca_0\otimes M_r(\bc)^{\otimes n}$. 
Endowing the $n$-th covering  with the pullback of the metric on the base space makes the covering projections locally isometric. The corresponding Dirac operator on $\ca_n$  is formally identical to that on $\ca_0$; when $\ca_n$ is described as a sub-algebra of $\ca_0\otimes M_r(\bc)^{\otimes n}$, the Dirac operator $D_n$ is affiliated to  $\cb(\ch)\otimes M_r(\bc)^{\otimes n}$, $\ch$ being the Hilbert space of the spectral triple on $\ca_0$. When $n\to\infty$, $\ca_\infty\subset\ca_0\otimes \mathrm{UHF}(r^\infty)\subset \cb(\ch)\otimes \car$, where $\car$ is the unique injective type II$_1$ factor obtained as the weak closure of the UHF algebra in the GNS representation of the  unital trace. Moreover, with a suitable choice of the section $s$ in \eqref{exactseq} and under the assumption that the matrix $B$ is purely expanding, the sequence $D_n$ converges  to an operator $D_\infty$ affiliated with $\cb(\ch)\otimes \car$. The triple $(\ca_\infty, \cb(\ch)\otimes \car,D_\infty)$ turns out to be a semifinite spectral triple and the metric dimension, given by the abscissa of convergence $d_\infty$ of the zeta function $\tau((1+D_\infty^2)^{s/2})$, and the noncommutative volume, given by the residue in $d_\infty$ of the zeta function, coincide with the corresponding quantities for the base torus.

Section 3 contains the extension of the results for the torus to the case of rational noncommutative 2-tori $A_\vartheta$, $\vartheta=p/q$.  In this case we get a self-covering if $\det B\equiv1$ mod $q$. With this proviso, for $B$  purely expanding, we get a semifinite spectral triple on the inductive limit, and the metric dimension and the noncommutative volume are the same as those of the base torus.

The third example of noncommutative regular self-coverings with finite abelian group is treated in Section 4, where we consider the covering $\cz\rtimes_\rho \bz^p\hookrightarrow\cz\rtimes_\rho (B^T)^{-1}\bz^p$, $B\in M_p(\bz)$ with $\det B \not\in \{0,\pm 1\}$ as above.
The action of $\bz^p/B\bz^p$ by translation on $C(\br^p/B\bz^p)$ induces an action  on  $\cz\rtimes_\rho(B^T)^{-1}\bz^p$ whose fixed point algebra is $\cz\rtimes_\rho \bz^p$.
We get a self-covering if the actions $g\in\bz^p\to \rho_g\in \Aut(\cz)$ and $g\in\bz^p\to \rho_{(B^T)^{-1}g}\in \Aut(\cz)$ are conjugate by an automorphism $\beta$ of $\cz$. Under this assumption the C$^*$-algebras $\ca_n$ are all isomorphic and may be described as  $\cz\rtimes (B^T)^{-n}\bz^p$, with the action defined in terms of the action of $\bz^p$ and the automorphism $\beta$.

The study of the extension of a spectral triple on a C$^*$-algebra to a triple on  crossed products was initiated in \cite{BMR} and then pursued in \cite{Skalski,Paterson}.
Such extension requires a choice of a (proper, translation bounded, matrix-valued) length function on the group.
We assume $\cz$ is endowed with a spectral triple, and use the mentioned results to extend such triple to the algebras $\ca_n$, with a suitable choice of a length function on $(B^T)^{-n}\bz^p$. However a further assumption is required in order to prove that the covering projection is locally isometric. The same results as in the previous sections hold for the spectral triple on the inductive limit $C^*$-algebra.

The subsequent section contains the analysis of an example of covering which is not given by an action of a finite abelian group. It consists of a UHF algebra with the shift endomorphism, the spectral triple being the one described in \cite{Chris}. In this case the C$^*$-algebras $\ca_n$ are naturally given by tensoring $\ca_0$ with $\cam(\bc)^{\otimes n}$, and we choose the Dirac operators to have the same form as that on $\ca_0$. However in this way local isometricity is not exactly satisfied,  while, as mentioned above,  the sequence $L_{n+p}(\a^p(a))$ converges when $n\to\infty$, for $ a$ in a suitable dense sub-algebra of $\ca_n$. As in the previous sections, we obtain a semifinite spectral triple on the inductive limit C$^*$-algebra with the same metric dimension and  noncommutative volume of the base space.

Section 6 deals with the metric properties induced by the limit spectral triple on the inductive limit C$^*$-algebra. We show that, in all the examples considered, the radii of the algebras $\ca_n$ diverge, giving rise to a non totally bounded noncommutative space. As explained before, this is a consequence of the fact that the covering projections are  locally isometric. 

In the second part of the section we compare our results with those in \cite{LP}, where the direct limits of noncommutative tori, called noncommutative solenoids, are seen as twisted group C$^*$-algebras acted upon by the solenoid group. 
It is shown there that the inductive sequence of noncommutative tori converges in the quantum Gromov-Hausdorff metric (and in the Gromov-Hausdorff propinquity) to the noncommutative solenoid, when the latter is endowed with a Lip-norm given by a suitable choice of the length function on the solenoid group (cf. \cite{Rief}). 
It turns out that our seminorm on the inductive limit is also induced, {\it a la} Rieffel, by a length function on the solenoid group, but our function is infinite on some elements, thus giving rise to a non totally bounded space. It would be interesting to know if also our sequence of quantum compact  metric spaces tends to the direct limit  w.r.t. some kind of (pointed) quantum Gromov-Hausdorff convergence.

We mention in conclusion that a motivation for the study of spectral triples for direct limits is the attempt to extend the constructions in \cite{BMR,Skalski,Paterson} to the case of a crossed product by a single endomorphism, that is, to the case of self-covering. The corresponding results are contained in \cite{AGI02}. An example of  self-covering with ramification and the study of the corresponding crossed product is contained in a further paper \cite{AGI03}.

\section{Noncommutative coverings w.r.t. a finite abelian  group} \label{NC-coverings}
\subsection{Spectral decomposition}
The aim of this section is to describe a spectral decomposition of an algebra in terms of an action of a finite abelian group. For more details and a general theory the interested reader is referred to \cite{Ped}. 

Let $\cb$ be a $C^*$-algebra and $\Gamma$ be a finite abelian group which acts on $\cb$ (we denote the action by $\g$). Let
\begin{displaymath}
	\cb_k:=\{b\in \cb \textrm{ s.t. } \g_g(b)=\langle k,g \rangle b \quad \forall g\in \Gamma \}, \quad  k\in \widehat{\Gamma}.
\end{displaymath}

\begin{proposition} \label{prop-1.1}
With the above notation,
\begin{enumerate}
\item[$(1)$] $\cb_h\cb_k\subset \cb_{hk}$; in particular each $\cb_k$ is an $\ca$-bimodule, where $\ca$ is the fixed point subalgebra,

\item[$(2)$] if $b_k\in \cb_k$ is invertible, then $b_k^{-1},b_k^*\in \cb_{k^{-1}}$,

\item[$(3)$] each $b\in \cb$ may be written as $\sum_{k\in \widehat{\Gamma}} b_k$ with $b_k\in \cb_k$.
\end{enumerate}
\end{proposition}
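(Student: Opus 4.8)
The plan is to exploit the characters of $\widehat{\Gamma}$ together with an averaging (group-convolution) argument over the finite group $\Gamma$, which is the standard route to spectral decompositions for finite abelian group actions.

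First I would prove $(1)$. Take $b_h\in\cb_h$ and $b_k\in\cb_k$. For every $g\in\Gamma$, since $\g_g$ is an automorphism,
$\g_g(b_hb_k)=\g_g(b_h)\g_g(b_k)=\langle h,g\rangle\langle k,g\rangle\, b_hb_k=\langle hk,g\rangle\, b_hb_k$,
using that $\widehat{\Gamma}$ consists of homomorphisms, so $\langle h,g\rangle\langle k,g\rangle=\langle hk,g\rangle$. Hence $b_hb_k\in\cb_{hk}$. Taking $h$ or $k$ to be the trivial character $1\in\widehat\Gamma$ (whose eigenspace is exactly the fixed point algebra $\ca$) shows $\ca\cb_k\subset\cb_k$ and $\cb_k\ca\subset\cb_k$, i.e. each $\cb_k$ is an $\ca$-bimodule.

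Next, for $(2)$, suppose $b_k\in\cb_k$ is invertible in $\cb$. Applying $\g_g$ to $b_kb_k^{-1}=1$ gives $\langle k,g\rangle\,b_k\,\g_g(b_k^{-1})=1$, so $\g_g(b_k^{-1})=\langle k,g\rangle^{-1}b_k^{-1}=\langle k^{-1},g\rangle\,b_k^{-1}$ (here I use $|\langle k,g\rangle|=1$, so $\langle k,g\rangle^{-1}=\overline{\langle k,g\rangle}=\langle k^{-1},g\rangle$); thus $b_k^{-1}\in\cb_{k^{-1}}$. For $b_k^*$, since each $\g_g$ is a $*$-automorphism, $\g_g(b_k^*)=\g_g(b_k)^*=\overline{\langle k,g\rangle}\,b_k^*=\langle k^{-1},g\rangle\,b_k^*$, so $b_k^*\in\cb_{k^{-1}}$ as well.

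Finally, for $(3)$, I would define, for each $k\in\widehat\Gamma$, the averaging operator
$P_k(b):=\frac{1}{|\Gamma|}\sum_{g\in\Gamma}\overline{\langle k,g\rangle}\,\g_g(b)$,
and set $b_k:=P_k(b)$. A direct computation using the orthogonality relations for characters of the finite abelian group $\Gamma$ shows $\g_{g_0}(P_k(b))=\langle k,g_0\rangle P_k(b)$, so $b_k\in\cb_k$, and that $\sum_{k\in\widehat\Gamma}P_k(b)=\frac{1}{|\Gamma|}\sum_{g}\big(\sum_k\overline{\langle k,g\rangle}\big)\g_g(b)=\g_e(b)=b$, since $\sum_{k\in\widehat\Gamma}\overline{\langle k,g\rangle}$ equals $|\Gamma|$ when $g=e$ and $0$ otherwise. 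This gives the desired decomposition $b=\sum_{k\in\widehat\Gamma}b_k$.

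None of the three parts presents a serious obstacle; the only point requiring a little care is keeping the pairing conventions consistent — treating $\langle k,g\rangle$ as a unit-modulus complex number so that inversion and complex conjugation agree, and correctly invoking the column/row orthogonality relations $\sum_g\overline{\langle k,g\rangle}\langle k',g\rangle=|\Gamma|\,\delta_{k,k'}$ and $\sum_k\overline{\langle k,g\rangle}\langle k,g'\rangle=|\Gamma|\,\delta_{g,g'}$ for the finite abelian group $\Gamma$. One should also note that the $P_k$ are contractive (convex combinations of $*$-automorphisms up to phases) so the sums are well defined in the C$^*$-algebra, though for the algebraic identities stated this is not strictly needed.
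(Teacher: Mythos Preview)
Your proof is correct and follows essentially the same approach as the paper: parts (1) and (2) are direct verifications from the definitions (the paper simply says they ``follow by definition''), and for (3) both you and the paper define the same averaging projections $b_k=\frac{1}{|\Gamma|}\sum_{g\in\Gamma}\langle k^{-1},g\rangle\,\g_g(b)$ and use the Schur orthogonality relations to show $\sum_k b_k=b$ and $b_k\in\cb_k$. Your write-up is in fact more detailed than the paper's.
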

Before proving this proposition we recall that by the
Schur orthogonality relations, \cite{Serre},
given $\Gamma$ a finite abelian group, $\widehat{\Gamma}$ its dual,
\begin{eqnarray}\label{schur-ort-rel}
\sum_{k\in\widehat{\Gamma}}\langle k,g\rangle &= &\d_{g,e}\cdot |\Gamma | \qquad \forall g\in \Gamma .
\end{eqnarray}

\begin{proof} The first two properties follow by definition.
Let us set 
$$
b_k \equiv E_k(b) \stackrel{def}{=}\frac{1}{|\Gamma|}\sum_{g\in\Gamma}\langle k^{-1},g\rangle \g_g(b).
$$
Then, by \eqref{schur-ort-rel},
\begin{eqnarray*}
	\sum_{k\in \widehat{\Gamma}} b_k &=& \frac{1}{|\Gamma|} \sum_k\sum_g \langle k^{-1},g\rangle \g_g(b) \\
	&=&  \frac{1}{|\Gamma|} |\Gamma|\sum_g \delta_{g,e} \g_g(b)=b.
\end{eqnarray*}
Finally, $b_k$ belongs to $\cb_k$ since, for any $g\in\Gamma$,
\begin{eqnarray*}
	\g_{g}(b_k) & = & \frac{1}{|\Gamma|}\sum_{h\in\Gamma} \langle k^{-1},h \rangle \g_{g}\g_h(b) \\
	&=&\langle k^{-1},g^{-1}\rangle\frac{1}{|\Gamma|}\sum_{h\in\Gamma}\langle k^{-1},h\rangle \g_h(b) \\
	&=&\langle k,g\rangle b_k.
\end{eqnarray*}
\end{proof}

\subsection{Noncommutative coverings} \label{section-nc-covering}

\begin{definition} \label{def-reg-cov}
	A \emph{finite (noncommutative) covering} with abelian group is an inclusion of  (unital) $C^*$-algebras $\ca\subset \cb$ together with an action of a finite abelian group $\Gamma$ on $\cb$ such that $\ca=\cb^\Gamma$. We will say that $\cb$ is a covering of $\ca$ with deck transformations given by the group $\Gamma$.
\end{definition}

Let us denote by $M_{\widehat{\Gamma}}(\cb)$ the algebra of matrices, whose entries belong to $\cb$ and are indexed by elements of $\widehat{\Gamma}$. Then, to any $b\in \cb$, we can associate  the matrix $\widetilde{M}(b)\in M_{\widehat{\Gamma}}(\cb)$ with the following entries
\begin{displaymath}
	\widetilde{M}(b)_{hk}=b_{h-k},\quad h,k\in\widehat{\Gamma}.
\end{displaymath}

By the definition of $b_k$ the following formula easily follows
\begin{eqnarray} \label{Mtilde}
		\widetilde{M}(b)\widetilde{M}(b')=\widetilde{M}(bb').
\end{eqnarray}
The following definition is motivated by Theorem \ref{caniso} below.
\begin{definition}\label{unitaries}
We say that the finite covering $\ca\subset\cb$ w.r.t. $\Gamma$ is regular if each $\cb_k$ has an element which is unitary in $\cb$, namely we may choose a map $\sigma: \widehat{\Gamma}\to \cb$ such that $\sigma(k)\in U(\cb)\cap \cb_k$, with $\sigma(e)=I$.
\end{definition}

\begin{remark}\label{regcov}
\item{$(i)$} Example \ref{nounitaries} shows this assumption does not always hold.
\item{$(ii)$} In the previous definition, it is enough to ask that each $\cb_k$ has an element which is invertible in $\cb$. Indeed, if $C\in\cb_k$ is invertible, and $C=UH$ is its polar decomposition, then $H=(C^*C)^{1/2}\in\ca$. It follows that  $U$ is unitary and belong to $\cb_k$.
\item{$(iii)$} Regularity also implies that the action is faithful. Indeed, if $g\in\Gamma$ acts trivially, we may find $k\in\widehat \Gamma$ such that $\langle k,g\rangle\ne1$, therefore the equation $\g_g(b)=\langle k,g \rangle b$ is satisfied only for $b=0$, and $\cb_k$ does not contain invertible elements.
\end{remark}

For regular coverings, we can define an embedding of $\cb$ into $M_{\widehat{\Gamma}}(\ca)$. Set 
\begin{displaymath}
	M(b)_{hk}=\sigma(h)^{-1}\widetilde{M}(b)_{hk}\sigma(k)
	=\sigma(h)^{-1}b_{h-k}\sigma(k), \quad h,k\in\widehat{\Gamma}.
\end{displaymath}
It follows from Proposition \ref{prop-1.1} that $M(b)_{hk}\in \ca$.

\begin{theorem}\label{embedding}
	Under the regularity hypothesis, the algebra $\cb$ is isomorphic to a subalgebra of matrices with coefficients in $\ca$, i.e. we have an embedding 
	\begin{equation}\label{eq:embedding}
	\cb\hookrightarrow \ca\otimes M_{\widehat{\Gamma}}(\mathbb C).
	\end{equation}
\end{theorem}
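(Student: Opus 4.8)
The plan is to show that the map $M\colon\cb\to M_{\widehat\Gamma}(\ca)$ defined just above is an injective $*$-homomorphism, which immediately yields the embedding \eqref{eq:embedding} upon identifying $M_{\widehat\Gamma}(\ca)$ with $\ca\otimes M_{\widehat\Gamma}(\bc)$. The key observation is that $M$ is obtained from $\widetilde M$ by conjugation with the diagonal matrix $\Sigma=\diag(\sigma(k))_{k\in\widehat\Gamma}$, which has entries in $\cb$ and is unitary in $M_{\widehat\Gamma}(\cb)$ since each $\sigma(k)\in U(\cb)$; concretely $M(b)=\Sigma^{-1}\widetilde M(b)\Sigma$. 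First I would verify multiplicativity: from \eqref{Mtilde} we get $\widetilde M(b)\widetilde M(b')=\widetilde M(bb')$, hence $M(b)M(b')=\Sigma^{-1}\widetilde M(b)\Sigma\Sigma^{-1}\widetilde M(b')\Sigma=\Sigma^{-1}\widetilde M(bb')\Sigma=M(bb')$. Linearity is clear from the definition. That the entries of $M(b)$ lie in $\ca$ is already noted in the excerpt (it follows from Proposition \ref{prop-1.1}(1)--(2), since $\sigma(h)^{-1}\in\cb_{h^{-1}}$, $b_{h-k}\in\cb_{h-k}$, $\sigma(k)\in\cb_k$, so the product lies in $\cb_{h^{-1}(h-k)k}=\cb_e=\ca$).

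\textbf{The $*$-property.} Next I would check that $M$ intertwines the involutions. One has $\widetilde M(b)_{hk}^{*}=b_{h-k}^{*}=(b^{*})_{k-h}=\widetilde M(b^{*})_{kh}$, using that $(b_j)^{*}=(b^{*})_{-j}$, which follows from the explicit formula $b_j=E_j(b)=\frac1{|\Gamma|}\sum_g\langle j^{-1},g\rangle\g_g(b)$ together with $\g_g$ being a $*$-automorphism and $\overline{\langle j^{-1},g\rangle}=\langle j,g\rangle=\langle (-j)^{-1},g\rangle$. Hence $\widetilde M(b)^{*}=\widetilde M(b^{*})$. Then $M(b)^{*}=(\Sigma^{-1}\widetilde M(b)\Sigma)^{*}=\Sigma^{*}\widetilde M(b)^{*}(\Sigma^{-1})^{*}=\Sigma^{-1}\widetilde M(b^{*})\Sigma=M(b^{*})$, where I use $\Sigma^{*}=\Sigma^{-1}$. (One should also confirm $M$ is unital: $M(I)_{hk}=\sigma(h)^{-1}I_{h-k}\sigma(k)=\delta_{hk}\sigma(h)^{-1}\sigma(h)=\delta_{hk}I$ since $I_j=\delta_{j,e}I$, so $M(I)$ is the identity matrix.)

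\textbf{Injectivity.} For injectivity, suppose $M(b)=0$. Conjugating back, $\widetilde M(b)=\Sigma M(b)\Sigma^{-1}=0$, so in particular every entry vanishes; taking the row $h=k$ gives $b_{e}=0$ and more generally $\widetilde M(b)_{hk}=b_{h-k}=0$ for all $h,k$, hence $b_{j}=0$ for every $j\in\widehat\Gamma$. By Proposition \ref{prop-1.1}(3), $b=\sum_{k}b_{k}=0$. Therefore $M$ is an injective unital $*$-homomorphism, which is automatically isometric, and its image is a $C^{*}$-subalgebra of $M_{\widehat\Gamma}(\ca)\cong\ca\otimes M_{\widehat\Gamma}(\bc)$, proving the theorem.

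\textbf{Main obstacle.} The construction itself is essentially formal once one has the spectral decomposition of Proposition \ref{prop-1.1} and the regularity hypothesis; the only genuinely delicate point is the bookkeeping with characters — making sure the index $h-k$ (written additively for $\widehat\Gamma$) and the various occurrences of $\sigma(h)^{-1}$, which lives in $\cb_{-h}$, combine so that all entries land in the fixed-point algebra $\ca$ and that the conjugation-by-$\Sigma$ picture is consistent with the entrywise formula. I expect no serious difficulty beyond that; in particular, no completeness or density argument is needed since $\widehat\Gamma$ is finite and the sum in Proposition \ref{prop-1.1}(3) is finite.
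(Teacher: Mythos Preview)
Your proof is correct and follows essentially the same approach as the paper: verify that $M$ is a $*$-homomorphism by reducing to the already-established multiplicativity of $\widetilde M$ in \eqref{Mtilde}, and note that entries land in $\ca$ by Proposition \ref{prop-1.1}. Your conjugation-by-$\Sigma$ viewpoint is a clean repackaging of the same computation; you also spell out injectivity and unitality, which the paper leaves implicit.
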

\begin{proof}
It is easy to show that $M(b^*)_{jk}=(M(b)_{kj})^*$, $\forall b\in\cb$, $j,k\in \widehat{\Gamma}$. That the product is preserved, namely 	
	\begin{displaymath}
		M(bb')_{hk}=\sum_j M(b)_{hj}M(b')_{jk} 
	\end{displaymath}
follows  easily from  \eqref{Mtilde}.
\end{proof}

In this paper we are mainly interested in self-coverings, namely when there exists an isomorphism $\phi:\cb\to \ca$ or, equivalently, $\ca$ is the image of $\cb$ under a unital endomorphism $\alpha=j\circ\phi$, where $j$ is the embedding of $\ca$ in $\cb$.

\begin{theorem}\label{embedding2}
Given a (noncommutative) regular self-covering with abelian group $\Gamma$, we may construct an inductive family $\ca_i$ associated with the endomorphism $\alpha$ as in \cite{Cuntz}. 
Then, setting $r=|\widehat{\Gamma}|=|\Gamma|$, we have the following embedding:
	$$
		\varinjlim \ca_i\hookrightarrow \ca\otimes UHF(r^{\infty}).
	$$
\end{theorem}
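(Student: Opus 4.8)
The plan is to iterate the embedding of Theorem \ref{embedding} and check that these embeddings are compatible with the connecting maps $\a$ of the inductive system. Concretely, by Theorem \ref{embedding} applied to the regular covering $\ca\subset\cb$ we have an embedding $\psi\colon\cb\hookrightarrow\ca\otimes M_{\widehat\Gamma}(\bc)$. Composing with the isomorphism $\phi\colon\cb\to\ca$ (so that $\a=j\circ\phi$), we obtain, after identifying $\ca_i\cong\ca$ for every $i$, a unital embedding $\iota\colon\ca\hookrightarrow\ca\otimes M_r(\bc)$, $r=|\Gamma|$, such that the diagram relating $\a\colon\ca_i\to\ca_{i+1}$ to $\mathrm{id}\otimes(\text{inclusion})$ commutes. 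The point is that $\a$, read through $\iota$, looks like the first-leg inclusion $\ca\otimes M_r(\bc)^{\otimes n}\hookrightarrow\ca\otimes M_r(\bc)^{\otimes(n+1)}$, $x\mapsto \iota(x)\otimes 1$ appropriately interpreted; more precisely one builds inductively embeddings $\iota_n\colon\ca_n\hookrightarrow\ca\otimes M_r(\bc)^{\otimes n}$ with $\iota_{n+1}\circ\a=(\iota_n\otimes\mathrm{id}_{M_r})\circ(\text{slot shift})$, using \eqref{Mtilde} to guarantee multiplicativity at each stage.

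The key steps, in order, are: (i) fix once and for all the unitaries $\sigma(k)\in U(\cb)\cap\cb_k$ granted by regularity, and record the resulting $\psi=M(\cdot)$ from Theorem \ref{embedding}; (ii) transport everything along $\phi$ to get $\iota\colon\ca\to\ca\otimes M_r(\bc)$ and verify that $\iota$ is a unital $*$-monomorphism — injectivity is automatic since $\phi$ and $\psi$ are injective and the composite of injective $*$-homomorphisms is injective; (iii) define $\iota_n$ by $\iota_0=\mathrm{id}_\ca$ and $\iota_{n+1}=(\mathrm{id}_\ca\otimes\mathrm{id}_{M_r}^{\otimes(n-1)}\otimes\iota')\circ\iota_n$ for a suitable "last-slot" version $\iota'$ of $\iota$, checking the intertwining identity $\iota_{n+1}\circ\a=\iota_n\otimes\mathrm{id}_{M_r}$ up to the canonical identification $\ca_n\cong\ca$; (iv) pass to the inductive limit: since each $\iota_n$ is isometric and the squares commute, the universal property of $\varinjlim\ca_i$ yields an isometric $*$-homomorphism $\iota_\infty\colon\varinjlim\ca_i\to\varinjlim(\ca\otimes M_r(\bc)^{\otimes n})=\ca\otimes\mathrm{UHF}(r^\infty)$, where the identification of the limit on the right uses that the connecting maps $\ca\otimes M_r(\bc)^{\otimes n}\hookrightarrow\ca\otimes M_r(\bc)^{\otimes(n+1)}$ are precisely the standard unital inclusions defining $\mathrm{UHF}(r^\infty)$ (tensored with $\ca$, which is a fixed C$^*$-algebra so commutes with the inductive limit); (v) conclude injectivity of $\iota_\infty$ from injectivity (equivalently, isometry) of each $\iota_n$, which survives the limit.

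The main obstacle I anticipate is \emph{bookkeeping the identifications}: the inductive family $\ca_i$ is an abstract copy of $\ca$ for each $i$, the connecting map $\a$ is an endomorphism of $\ca$ rather than a literal inclusion, and Theorem \ref{embedding} embeds $\cb$ (not $\ca$) into matrices over $\ca$; so one must carefully track how $\phi\colon\cb\to\ca$ and the embedding $\ca\subset\cb$ interact, and make sure that the "matrix slot" introduced at step $n$ is genuinely a fresh tensor factor $M_r(\bc)$ and that the connecting maps on the matrix side are exactly the UHF inclusions (unitality of $\sigma$, with $\sigma(e)=I$, is what ensures the embeddings are unital, hence that the limit is $\ca\otimes\mathrm{UHF}(r^\infty)$ and not some non-unital subalgebra). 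Once these identifications are pinned down, the verification that the squares commute is a direct computation from the formula $M(b)_{hk}=\sigma(h)^{-1}b_{h-k}\sigma(k)$ together with \eqref{Mtilde}, and the passage to the limit is the standard functoriality of $\varinjlim$ for isometric compatible maps.
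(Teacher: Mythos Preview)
Your proposal is correct and follows the same route as the paper: iterate the embedding of Theorem~\ref{embedding} to obtain $\ca_j\hookrightarrow\ca\otimes M_r(\bc)^{\otimes j}$ and then pass to the inductive limit. The paper's own proof is a two-line sketch (``apply Theorem~\ref{embedding} $j$ times; the result immediately follows'') and does not spell out the compatibility checks you worry about; your elaboration of these is reasonable, though the explicit formula for $\iota_{n+1}$ in step (iii) is garbled (the operator $\iota'$ should act on the $\ca$-slot, not the last $M_r$-slot), so you should clean that up before finalizing.
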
  
\begin{proof}
By applying Theorem \ref{embedding} $j$ times, we get an embedding of $\ca_j$ into $\ca\otimes M_r^{\otimes j}$. The result immediately follows.
\end{proof}

The following example shows that  the regularity property in Definition \ref{unitaries} is not always satisfied.

\begin{example}\label{nounitaries}
	Let $\cb=M_3(\mathbb{C})$, $\Gamma=\mathbb{Z}_2=\{0,1\}$.  We have the following action $\gamma$ on $\cb$: $\gamma_0=id$, $\gamma_1=ad(J)$, where
	\begin{displaymath}
		J=\left(
 		\begin{array}{ccc}
 		1 & 0 & 0 \\ 0 & -1 & 0 \\ 0 & 0 & -1
 		\end{array}
 	\right).
	\end{displaymath}
	Therefore
	\begin{displaymath}
		\cb_0=\ca=\cb^\Gamma =\left\{ x\in \cb  :  x=\left(
 		\begin{array}{ccc}
 		a & 0 & 0 \\ 0 & b & c \\ 0 & d & e
 		\end{array}
 	\right) \right\}, \quad \cb_{1}=\left\{ x\in \cb  : x=\left(
 		\begin{array}{ccc}
 		0 & a & b \\ c & 0 & 0 \\ d & 0 & 0
 		\end{array}
 	\right) \right\}.
	\end{displaymath}
	Hence $\cb_1$ has no invertible elements.
\end{example}

\subsection{Representations}

\begin{proposition} \label{Representation-covering}
Consider a (noncommutative) regular self-covering $\ca\subset \cb$ with abelian group $\Gamma$.

\item[$(1)$] A representation $\pi$ of $\ca$ on a Hilbert space $H$ produces a representation $\widetilde{\pi}$ of $\cb$ on $H\otimes \bc^r$, $r=|\widehat{\Gamma}|$, given by $\widetilde{\pi}(b):=[ \pi(M(b)_{hk}) ]_{h,k\in\widehat{\Gamma}} \in M_{\widehat{\Gamma}} (\cb(H)) = \cb(H\otimes \bc^r)$, $\forall b\in\cb$. 

\item[$(2)$] If the representation of $\ca$ is induced by a state $\f$ via the GNS mechanism, the corresponding representation of $\cb$ on $H\otimes \bc^r$ is a GNS representation induced by the state $\tilde\f$, where $\tilde\f(b)=\f\circ E_\Gamma$, and $E_\Gamma$ is the conditional expectation from $\cb$ to $\ca$. Moreover, the map
\begin{equation}\label{isometry}
\begin{matrix}
\cb&\rightarrow&\ca\otimes\bc^r\\
b&\mapsto&(a_j)_{j\in\hat\Gamma}
\end{matrix}
\ ,\quad a_j=\sigma(j)^{-1}b_j
\end{equation}
extends to an isomorphism of the Hilbert spaces $L^2(\cb,\tilde\f)$ and $L^2(\ca,\f)\otimes\bc^r$.
\end{proposition}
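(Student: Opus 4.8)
The plan is to establish the two assertions of Proposition~\ref{Representation-covering} in turn, with the bulk of the work going into part $(2)$.

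For part $(1)$, the point is simply that $b\mapsto [M(b)_{hk}]_{h,k}$ is a $*$-homomorphism $\cb\to M_{\widehat\Gamma}(\ca)\subset M_{\widehat\Gamma}(\cb(H))=\cb(H\otimes\bc^r)$; this is exactly the content of Theorem~\ref{embedding} (product preserved, $*$ preserved), and composing with the ampliation of $\pi$ gives $\widetilde\pi$. The only mild check is unitality, $\widetilde\pi(I)=I$, which follows from $M(I)_{hk}=\sigma(h)^{-1}I_{h-k}\sigma(k)=\delta_{hk}I$ since $I_{k}=\delta_{k,e}I$.

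For part $(2)$, I would first recall that $E_\Gamma=E_e$ is the conditional expectation onto $\ca$ given by averaging over $\Gamma$, so $\widetilde\f:=\f\circ E_\Gamma$ is a state on $\cb$. The strategy is to identify the GNS space $L^2(\cb,\widetilde\f)$ with $L^2(\ca,\f)\otimes\bc^r$ via the linear map in \eqref{isometry}, and then check that under this identification the GNS representation of $\cb$ becomes $\widetilde\pi$ and the cyclic vector becomes $\xi_\f\otimes e_e$ (the GNS vector for $\f$ tensored with the basis vector indexed by the identity of $\widehat\Gamma$). Concretely: (a) using the spectral decomposition $b=\sum_k b_k$ and the fact that $E_\Gamma(b_h^*b_k)=\delta_{hk}\,b_h^*b_h\in\ca$ (because $b_h^*b_k\in\cb_{k-h}$ by Proposition~\ref{prop-1.1}(1)--(2), and $E_\Gamma$ kills all nonzero components), one computes $\widetilde\f(b^*b')=\sum_k\f\big((\sigma(k)^{-1}b_k)^*(\sigma(k)^{-1}b'_k)\big)$, since $\sigma(k)$ is unitary so $b_k^*b'_k=(\sigma(k)^{-1}b_k)^*(\sigma(k)^{-1}b'_k)$; (b) this is precisely the inner product of $(\sigma(j)^{-1}b_j)_j$ and $(\sigma(j)^{-1}b'_j)_j$ in $L^2(\ca,\f)\otimes\bc^r$, so the map \eqref{isometry} descends to an isometry of pre-Hilbert spaces with dense range, hence extends to a unitary $U:L^2(\cb,\widetilde\f)\to L^2(\ca,\f)\otimes\bc^r$; (c) finally verify $U\,\pi_{\widetilde\f}(b)\,U^*=\widetilde\pi(b)$ by checking it on the dense set, which amounts to the identity $\big((bb')_k\big)$ expressed through the matrix $M(b)$ acting on the vector $(\sigma(j)^{-1}b'_j)_j$ — i.e. $\sigma(h)^{-1}(bb')_h=\sum_j M(b)_{hj}\,\sigma(j)^{-1}b'_j$, which is again \eqref{Mtilde} rewritten; and that $U$ sends the class of $I$ to $\xi_\f\otimes e_e$. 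Cyclicity of $\xi_\f\otimes e_e$ for $\widetilde\pi(\cb)$ follows since $\widetilde\pi(\sigma(k))(\xi_\f\otimes e_e)=\xi_\f\otimes e_k$ (as $\sigma(k)_j=\delta_{jk}\sigma(k)$ gives $M(\sigma(k))_{he}=\delta_{hk}I$) and $\widetilde\pi(a)(\xi_\f\otimes e_k)=\pi(a)\xi_\f\otimes e_k$ for $a\in\ca$, and $\pi$ is cyclic; so $\widetilde\pi$ is the GNS representation of $\widetilde\f$ by uniqueness.

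The main obstacle, and where I would spend the most care, is bookkeeping the indices and the $\sigma$-twists consistently: making sure the conjugation by $\diag(\sigma(k))$ that turns $\widetilde M(b)$ into $M(b)$ is compatible with the twist $a_j=\sigma(j)^{-1}b_j$ in \eqref{isometry}, so that the intertwining identity $U\pi_{\widetilde\f}(b)U^*=\widetilde\pi(b)$ comes out with no leftover factors. Everything else — positivity of $\widetilde\f$, density of ranges, the uniqueness argument for GNS — is routine once the algebraic identity in step (c) is pinned down, and that identity is just a reindexed form of \eqref{Mtilde} together with unitarity of the $\sigma(k)$.
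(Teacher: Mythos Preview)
Your proposal is correct and follows essentially the same approach as the paper. The paper's proof is organized slightly more economically: rather than first building the unitary $U$ from \eqref{isometry} and then verifying the intertwining, it directly checks that $(\widetilde\pi, H\otimes\bc^r,\xi_\f\otimes e_e)$ is a GNS triple for $\tilde\f$ by computing $\widetilde\pi(b)(\xi_\f\otimes e_e)=\oplus_k\,\sigma(k)^{-1}b_k\xi_\f$ (giving cyclicity) and $\big(\xi_\f\otimes e_e,\widetilde\pi(b)(\xi_\f\otimes e_e)\big)=\f(b_e)=\f\circ E_\Gamma(b)$ (giving the state), then invokes GNS uniqueness to obtain the isomorphism \eqref{isometry} for free --- exactly the computations in your steps (a)--(d), just in a different order.
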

\begin{proof}
$(1)$ It is a simple computation.

\item[$(2)$] Denoting by $\xi_\f$ the GNS vector in $H$, we set $\tilde{\xi_\f}$ to be the vector $\xi_\f$ in $H_e$ and 0 in the other summands.
It is cyclic, because
$$
\widetilde{\pi}(b) \tilde{\xi}_\f = \oplus_{k\in\widehat\Gamma} \ \sigma(k)^{-1}b_{k}\xi_\f.
$$
Since $\xi_\f$ is cyclic for $\ca$, $\{\sigma(k)^{-1}b_{k}\xi_\f : b_k\in \cb_k\}$ is dense in $H$.
It induces the state $\tilde\f$, since
$$
\big(\tilde{\xi}_\f,\widetilde{\pi}(b)\tilde{\xi}_\f\big)= \big(\xi_\f,b_{e}\xi_\f\big) = \f\left(\frac{1}{|\Gamma|}\sum_{g\in\Gamma} \g_g(b)\right)=\f\circ E_\Gamma(b).
$$
The isomorphism in \eqref{isometry} follows by the GNS theorem.
\end{proof}

\subsection{Finite regular coverings}
In this subsection we discuss the relation between our definition of  (noncommutative)  finite regular covering and the classical notion of regular covering. As a byproduct of an analysis on actions of compact quantum groups, it was proved in \cite{BDH} that a finite covering is regular iff the ``can'' map is an isomorphism. More precisely, if $X$ and $Y$ are compact Hausdorff  spaces and $\pi: X\to Y$ is a covering map    with finite group of deck transformations $\Gamma$,  $X$ is a regular covering of $Y$ if and only if the canonical map 
\begin{gather*}
{\rm can}: C(X)\otimes_{C(Y)} C(X)\to C(X)\otimes C(\Gamma)\\
f_1\otimes f_2 \to (f_1\otimes 1)\delta(f_2), 
\end{gather*}
is an isomorphism of C$^*$-algebras, where $\delta f = \sum_{g\in \Gamma} \g_{g^{-1}}(f)\otimes \chi_g$, $\g_g: \Gamma\to Aut(C(X))$ and $\chi_g$ denote the action induced by $\Gamma$ and the characteristic function on elements of $\Gamma$, respectively. 

The map ${\rm can}$  for classical coverings makes perfect sense in our case too
$$
{\rm can}: \cb\otimes_\ca \cb\to \cb\otimes C(\Gamma),
$$
where ${\rm can}(x\otimes y)=(x\otimes 1)\delta(y)$ and $\delta (y)=\sum_{g\in \Gamma} \g_{g^{-1}}(y) \otimes \chi_g$.  In our framework, however, the canonical map is no longer a morphism of C$^*$-algebras, it is a morphism of $(\cb-\ca)$-bimodules. In fact, this map clearly commutes with the left action of $\cb$. Moreover, the right action $\ca$ commutes with  ${\rm can}$ since $\delta|_\ca=id$. The following theorem shows that, under the regularity property of Definition \ref{unitaries}, the can map is an isomorphism, that is, the regularity property according to \cite{BDH}. 

\begin{theorem}\label{caniso}
Under the above hypotheses, the map ${\rm can}: \cb\otimes_\ca \cb\to \cb\otimes C(\Gamma)$ is an isomorphism of $(\cb-\ca)$-bimodules.
\end{theorem}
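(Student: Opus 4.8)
\emph{The plan} is to use the regularity hypothesis to reduce the statement to a computation with free $\ca$-modules of rank $|\widehat\Gamma|^2$, on which ${\rm can}$ turns out to act by an invertible ``monomial'' matrix. By Proposition~\ref{prop-1.1} and Definition~\ref{unitaries} each spectral subspace is free of rank one on either side, $\cb_k=\sigma(k)\ca=\ca\sigma(k)$ (if $b_k\in\cb_k$ then $\sigma(k)^{-1}b_k$ and $b_k\sigma(k)^{-1}$ lie in $\cb_e=\ca$), so $\cb=\bigoplus_{k\in\widehat\Gamma}\cb_k$ is a free left \emph{and} right $\ca$-module with basis $\{\sigma(k):k\in\widehat\Gamma\}$. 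Hence $\cb\otimes_\ca\cb$, equipped with the left $\cb$-action on the first leg and the natural right $\ca$-action $(x\otimes y)\cdot a=x\otimes ya$ on the second, is a free right $\ca$-module with basis $\{\sigma(h)\otimes\sigma(k):h,k\in\widehat\Gamma\}$: writing the first factor via its right decomposition and the second via its left one, every element is uniquely $\sum_{h,k}\sigma(h)\otimes\sigma(k)a_{hk}$ with $a_{hk}\in\ca$, using $\ca\sigma(k)=\sigma(k)\ca$ to move scalars across the tensor symbol.

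\emph{Next} I would Fourier-analyse $C(\Gamma)$. For $k\in\widehat\Gamma$ put $u_k:=\sum_{g\in\Gamma}\langle k^{-1},g\rangle\chi_g\in C(\Gamma)$; by the Schur relations \eqref{schur-ort-rel} the matrix $[\langle k^{-1},g\rangle]_{k\in\widehat\Gamma,\,g\in\Gamma}$ is invertible, so $\{u_k:k\in\widehat\Gamma\}$ is a linear basis of $C(\Gamma)$. The crucial point is that $\delta$ is \emph{diagonal} on spectral subspaces: since $\g_{g^{-1}}(b_k)=\langle k,g^{-1}\rangle b_k=\langle k^{-1},g\rangle b_k$ for $b_k\in\cb_k$, one obtains $\delta(b_k)=b_k\otimes u_k$, in particular $\delta(\sigma(k))=\sigma(k)\otimes u_k$. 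Since $\delta|_\ca=\mathrm{id}$ (indeed $\delta(a)=a\otimes 1$), the right $\ca$-action on $\cb\otimes C(\Gamma)$ compatible with ${\rm can}$ is $z\mapsto z(a\otimes 1)$, i.e.\ right multiplication on the $\cb$-leg, which preserves the $u_k$-grading; hence $\cb\otimes C(\Gamma)$ is a free right $\ca$-module with basis $\{\sigma(h)\otimes u_k:h,k\in\widehat\Gamma\}$.

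\emph{Then} I would evaluate ${\rm can}$ on the first basis. Using $\delta(\sigma(k))=\sigma(k)\otimes u_k$,
\begin{align*}
{\rm can}(\sigma(h)\otimes\sigma(k)) &= (\sigma(h)\otimes 1)\,\delta(\sigma(k)) = \sigma(h)\sigma(k)\otimes u_k \\
&= \big(\sigma(hk)\otimes u_k\big)\cdot c(h,k),
\end{align*}
where $c(h,k):=\sigma(hk)^{-1}\sigma(h)\sigma(k)\in\cb_e=\ca$ is a unitary of $\ca$, being a product of unitaries of $\cb$ lying in $\ca$. For fixed $k$ the map $h\mapsto hk$ is a bijection of $\widehat\Gamma$, so in the two bases above ${\rm can}$ is represented by a generalized permutation matrix with invertible entries $c(h,k)$; it is therefore a bijection, with inverse determined by $\sigma(m)\otimes u_k\mapsto\big(\sigma(mk^{-1})\otimes\sigma(k)\big)\cdot c(mk^{-1},k)^{-1}$, extended right $\ca$-linearly. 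Since ${\rm can}$ was already observed to be a morphism of $(\cb-\ca)$-bimodules (left $\cb$-linearity being ${\rm can}(bx\otimes y)=(b\otimes 1){\rm can}(x\otimes y)$, right $\ca$-linearity being the identification used above) and it is bijective, its inverse is automatically a bimodule morphism, so ${\rm can}$ is an isomorphism of $(\cb-\ca)$-bimodules.

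\emph{The main obstacle} I anticipate is organisational rather than conceptual: keeping straight the \emph{right} $\ca$-module structures on the two sides and checking carefully that ${\rm can}$ really intertwines them, so that the phrase ``free $\ca$-module with basis $\dots$'' is legitimate on both ends. Once that bookkeeping is pinned down, the Fourier basis $\{u_k\}$, the diagonalisation $\delta(b_k)=b_k\otimes u_k$, and the monomial-matrix conclusion are all routine.
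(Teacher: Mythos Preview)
Your proof is correct and rests on the same spectral decomposition as the paper's: both identify $\cb\otimes_\ca\cb$ with a free $\ca$-module on the generators $\{\sigma(h)\otimes\sigma(k)\}$ and compute ${\rm can}$ on these. The difference is on the target side. The paper works in the standard basis $\{\chi_g\}$ of $C(\Gamma)$, expands ${\rm can}(z)=\sum_g(\cdots)\otimes\chi_g$, and then argues injectivity and surjectivity separately by hand, invoking the Schur relations \eqref{schur-ort-rel} at each step to disentangle the resulting linear system in the coefficients $a_{j,k}$. You instead pass to the Fourier basis $u_k=\sum_g\langle k^{-1},g\rangle\chi_g$, which diagonalises $\delta$ on spectral subspaces ($\delta(b_k)=b_k\otimes u_k$); in these coordinates ${\rm can}$ becomes a generalized permutation matrix with unit entries $c(h,k)=\sigma(hk)^{-1}\sigma(h)\sigma(k)\in\cu(\ca)$, and bijectivity is immediate. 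Your route is shorter and more transparent---the Schur orthogonality is used once, to justify the basis change, rather than repeatedly inside the injectivity/surjectivity arguments---while the paper's is a direct computation that arrives at the same answer once the orthogonality is applied.
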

\begin{proof}
The group $\G\times \G$ clearly acts on $\cb\otimes_{\ca}\cb$, the eigenspaces being 
$(\cb\otimes_\ca \cb)_{j,k} = \{ \s(j)a\otimes\s(k) : a\in\ca \}$, $(j,k)\in\widehat\G\times \widehat\G$.
Therefore the elements of $\cb\otimes_{\ca}\cb$ can be written as 
$$
z=\sum_{j,k\in \widehat{\Gamma}} \s(j) a_{j,k}\otimes \s(k) \quad a_{j,k}\in \ca.
$$
Suppose that ${\rm can}(z)=0$. We want to prove that $z=0$.
Using the fact that $\cb$ is the direct sum of its eigenspaces we get
\begin{align*}
{\rm can}(z) & = \sum_{g\in \Gamma}\sum_{j,k\in \widehat{\Gamma}} \langle g^{-1}, k\rangle \s(j) a_{j,k}\s(k)\otimes \chi_g=0 \\
\Rightarrow &  \sum_{j,k\in \widehat{\Gamma}} \langle g^{-1}, k\rangle \s(j) a_{j,k}\s(k)=0 \quad \forall g\in \Gamma,
\end{align*}
where $a_{j,k}\in \ca$. Now we show that any $a_{j,k}$ is zero. In fact,
multiplying by $\langle g, \ell \rangle$ and summing over $g\in\Gamma$, we get  
\begin{align*}
0 = \sum_{g\in \Gamma} \langle g, \ell\rangle \sum_{j,k\in \widehat{\Gamma}} \langle g^{-1}, k\rangle \s(j) a_{j,k}\s(k) 
 = \sum_{j,k\in \widehat{\Gamma}} \langle g, \ell k^{-1}\rangle \s(j) a_{j,k}\s(k) 
 = |\Gamma| \sum_{j\in\widehat{\Gamma}} \s(j) a_{j,k}\s(k),
\end{align*}
which implies that $a_{j,k}=0$ for all $j,k \in \widehat{\Gamma}$, so that $z=0$.

Consider $\sum_{g\in \Gamma} b(g) \otimes \chi_g$, we want to show that it can be obtained as ${\rm can}(z)$ for some $z\in \cb\otimes_\ca \cb$.  By the above computations, it suffices to solve the following equation, for any $\ell\in \widehat{\Gamma}$,
$$
\sum_{g\in \Gamma} \langle g, \ell\rangle b(g) =\sum_{g\in \Gamma} \langle g, \ell\rangle \sum_{j,k\in \widehat{\Gamma}} \langle g^{-1}, k\rangle \s(j) a_{j,k}\s(k),\\
$$
which, using \eqref{schur-ort-rel}, may be rewritten as 
$$
\sum_{g\in \Gamma} \langle g, \ell \rangle b(g) \s(k)^{-1} = |\Gamma|  \sum_{j \in \widehat{\Gamma}}   \s(j) a_{j,k}.
$$
Since each $b(g)$ is given, the cofficients $a_{j,k}$ can be uniquely determined using again the decomposition of $\cb$ in its eigenspaces.
\end{proof}

\section{Self-coverings of tori}\label{Tori-cov}

\subsection{The $C^*$-algebra and its spectral triple}
We consider the $p$-torus $\bt^p=\mathbb{R}^p/\mathbb{Z}^p$ endowed with the usual metric, inherited from $\mathbb{R}^p$. On this Riemannian manifold we have the Levi-Civita connection $\nabla^{LC}=d$ and we can define the Dirac operator acting on the Hilbert space $\bc^{2^{[p/2]}} \otimes L^2(\bt^p,dm)$ 
\[
D=-i\sum_{a=1}^p \eps^a \otimes \partial^a,
\]
where $\eps^a= (\eps^a)^*\in M_{2^{[p/2]}}(\bc)$   
furnish a representation of the Clifford algebra for the $p$-torus (see \cite{Spin} for more details on Dirac operators).
Therefore, we have the following spectral triple 
\begin{eqnarray*}
	(C^1(\bt^p),  \bc^{2^{[p/2]}} \otimes L^2(\bt^p,dm), D=-i\sum_{a=1}^p \eps^a\otimes  \partial^a).
\end{eqnarray*}

\subsection{The covering}
Consider an integer-valued matrix $B\in M_p(\bz)$ with $|\!\det (B)|=:r>1$. This defines a covering of $\bt^p$ as follows. Let us set $\bt_1=\br^p/B\bz^p$ seen as a covering space of $\bt_0:=\bt^p$. Clearly $\bz^p$ acts on $\bt_1$ by translations, the subgroup $B\bz^p$ acting trivially by definition, namely we have an action of $\bz_B:=\bz/B\bz^p$ on $\bt_1$, which is simply the group of deck trasformations for the covering. We denote this action by $\gamma$.
We are now in the situation described in the previous section, with $\ca=C(\bt_0)$ the fixed point algebra of $\cb=C(\bt_1)$ under the action of $\bz_B$.  These algebras can be endowed with the following states, respectively
\begin{eqnarray*}
\tau_0(f) &=& \int_{\bt_0} f dm, \quad f\in \ca,\\
\tau_1(f) &=& \frac{1}{|\!\det (B)|}\int_{\bt_1} f dm, \quad f\in \cb,
\end{eqnarray*}
where $dm$ is Haar measure.

\begin{proposition}
The GNS representation  $\pi_1:\cb\to B(L^2(\cb, \tau_1))=B(L^2(\bt_1, dm))$ is unitarily equivalent to the representation $\widetilde{\pi}_0$ obtained by $\pi_0: \ca\to B(L^2(\ca, \tau_0))=B(L^2(\bt_0, dm))$ according to Proposition \ref{Representation-covering}.
\end{proposition}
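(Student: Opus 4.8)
The plan is to exhibit an explicit unitary intertwiner between $L^2(\bt_1,dm)$ and $L^2(\bt_0,dm)\otimes\bc^r$, and to check that it conjugates $\pi_1$ into $\widetilde\pi_0$. First I would verify that we are genuinely in the situation of Section~1: the inclusion $C(\bt_0)\subset C(\bt_1)$ with the action $\gamma$ of $\bz_B=\bz^p/B\bz^p$ is a finite noncommutative covering, and it is \emph{regular} in the sense of Definition~\ref{unitaries}. The eigenspace decomposition is the Fourier decomposition: writing $\widehat{\bz_B}=(B^T)^{-1}\bz^p/\bz^p$, for each class $[k]$ the character $e_k(x)=e^{2\pi i\langle k,x\rangle}$ is a unitary element of $C(\bt_1)$ lying in $\cb_{[k]}$, and $e_0=1$. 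So we may take $\sigma([k])=e_k$ for a chosen set of representatives $k$, which is precisely a section of the exact sequence \eqref{exactseq}. This gives the hypotheses needed to invoke Proposition~\ref{Representation-covering}.

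Next I would identify the states. The conditional expectation $E_{\bz_B}\colon C(\bt_1)\to C(\bt_0)$ is averaging over the deck group, and one checks directly that $\tau_0\circ E_{\bz_B}=\tau_1$ (both integrate against normalized Haar measure on $\bt_1$, since $\tau_1$ already carries the factor $1/|\det B|$ and $E_{\bz_B}$ is the orthogonal projection onto $\bz_B$-invariant functions). Hence the state $\widetilde{\tau_0}$ produced in part~(2) of Proposition~\ref{Representation-covering} is exactly $\tau_1$. By uniqueness in the GNS construction, the GNS representation of $\cb$ attached to $\tau_1$ is unitarily equivalent to the representation $\widetilde{\pi}_0$ built from $\pi_0$; this is the content of part~(2) of that proposition, and the intertwining unitary is the extension to $L^2$ of the map $b\mapsto(\sigma([k])^{-1}b_{[k]})_{[k]}$ in \eqref{isometry}. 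Concretely this unitary sends $f\in L^2(\bt_1,dm)$ to its vector of "Fourier components modulo $\bz^p$": decompose $f=\sum_{[k]}f_{[k]}$ with $f_{[k]}\in\cb_{[k]}$, and set the $[k]$-th component to $e_{-k}f_{[k]}\in L^2(\bt_0,dm)$.

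Finally I would note that the only thing not already packaged in Proposition~\ref{Representation-covering} is the identification of the \emph{left module} structures, i.e.\ that under this unitary $\pi_1(b)$ becomes $\widetilde{\pi}_0(b)=[\pi_0(M(b)_{hk})]_{h,k}$; but this is exactly part~(1) together with the fact that $\widetilde\pi_0$ on the cyclic vector $\widetilde{\xi_\f}$ reproduces the cyclic vector of the GNS data for $\tau_1$, which is checked in the proof of Proposition~\ref{Representation-covering}. So the proof reduces to: (a) regularity via Fourier characters, (b) $\tau_0\circ E_{\bz_B}=\tau_1$, (c) apply Proposition~\ref{Representation-covering}(2). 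I expect step~(b)---keeping straight the normalization constants $1/|\det B|$ and the fact that $\mathrm{vol}(\bt_1)=|\det B|\cdot\mathrm{vol}(\bt_0)$ so that $E_{\bz_B}$ really is $\tau_1$-preserving onto $(\cb,\tau_0\circ E_{\bz_B})$---to be the only place where a small computation is needed; everything else is a direct citation of the general machinery of Section~1.
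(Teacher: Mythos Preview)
Your proposal is correct and follows essentially the same route as the paper: the paper's proof reduces immediately to checking $\tau_1=\tau_0\circ E$ and invokes GNS uniqueness, observing that both sides are translation-invariant probability measures on $\bt_1$ and hence agree by uniqueness of Haar measure. Your write-up is more explicit (you spell out the regularity via the characters $e_k$ and the form of the intertwiner), but the core argument is identical.
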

\begin{proof}
By the GNS theorem it is enough to check that  $\tau_1=\tau_0\circ E$, where $E$ denotes the conditional expectation from $\cb$ to $\ca$. This follows from the following observation on the associated measures: they are both probability measures that are traslation invariant, by the results on  Haar measures the claim follows.  
\end{proof}

In order to apply the results of the previous section, we need to choose unitaries in the eigenspaces $\cb_k$, $k\in\widehat{\bz_B}$, namely a map $\sigma:g\in\widehat{\bz_B}\to\cu(\cb)\cap \cb_g$.

\bigskip

With $\bt_0=\br^p/\bz^p$, $\bt_1=\br^p/B\bz^p$, $\bz_B=\bz^p/B\bz^p$ as above,  set $A=(B^T)^{-1}$, $\langle x,y\rangle = \exp\big( 2\pi i\sum_{a=1}^p x^ay^a \big)$, $x,y\in\br^p$. 
 
\begin{lemma}
With the above notation

\item[$(1)$] The cardinality $|\bz_B|$ of $\bz_B$ is equal to $\molt$,

\item[$(2)$] the following duality relations hold: $\widehat{\bt}_0=(\br^p/\bz^p)^{\widehat{}}=\bz^p$,
$\widehat{\bt}_1=(\br^p/B\bz^p)^{\widehat{}}=A\bz^p$, 
$\widehat{\bz_B}=(\bz^p/B\bz^p)^{\widehat{}}=A\bz^p/\bz^p$.

In particular, the duality $\langle z,g\rangle$, $g\in \bt_1,z\in A\bz^p$ induces the duality $\langle k,g \rangle_o$, $g\in\bz_B,k\in\widehat{\bz_B}$, namely if $g\in\bz_B\subset\bt_1$,  
$\langle z,g\rangle=\langle \dot{z},g\rangle_o$, where $\dot{z}$ denotes the class of $z$ in $\widehat{\bz_B}$. For this reason we drop the subscript $o$ in the following.
\end{lemma}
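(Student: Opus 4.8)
The plan is to establish the three assertions of the lemma by computing Pontryagin duals of the relevant groups directly, since each is either a torus or a finite quotient of lattices.

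For part (1), I would note that $\bz_B=\bz^p/B\bz^p$ is a finitely generated abelian group that is moreover finite, because $B$ is injective on $\br^p$ (as $\det B\ne0$), so $B\bz^p$ is a finite-index sublattice of $\bz^p$. The index of $B\bz^p$ in $\bz^p$ is exactly $|\det B|=\molt$ by the standard Smith normal form argument (write $B=UDV$ with $U,V\in GL_p(\bz)$ and $D=\diag(d_1,\dots,d_p)$; then $\bz^p/B\bz^p\cong\bigoplus_i\bz/d_i\bz$, which has cardinality $\prod_i|d_i|=|\det D|=|\det B|$). Hence $|\bz_B|=\molt$.

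For part (2), the first duality $\widehat{\bt}_0=\bz^p$ is the classical statement that the dual of $\br^p/\bz^p$ under the pairing $\langle x,y\rangle=\exp(2\pi i\sum x^ay^a)$ is the dual lattice $\bz^p$. For $\widehat{\bt}_1=\widehat{\br^p/B\bz^p}$, I would use that the dual of $\br^p/\Lambda$ for a full lattice $\Lambda$ is the dual lattice $\Lambda^*=\{z\in\br^p:\langle z,\lambda\rangle\in\bz\ \forall\lambda\in\Lambda\}$; here $\Lambda=B\bz^p$, and $z\cdot(Bn)\in\bz$ for all $n\in\bz^p$ iff $B^Tz\in\bz^p$ iff $z\in(B^T)^{-1}\bz^p=A\bz^p$. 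So $\widehat{\bt}_1=A\bz^p$. Finally, the inclusion $\bz_B=\bz^p/B\bz^p\hookrightarrow\br^p/B\bz^p=\bt_1$ as a finite subgroup dualizes to a surjection $\widehat{\bt}_1=A\bz^p\twoheadrightarrow\widehat{\bz_B}$ whose kernel consists of those $z\in A\bz^p$ pairing trivially with all of $\bz^p$, i.e. $z\in\bz^p$; hence $\widehat{\bz_B}=A\bz^p/\bz^p$. The compatibility of the pairings is then immediate from the way the quotient pairing is defined: for $z\in A\bz^p$ and $g\in\bz_B$ viewed inside $\bt_1$, the restriction of the character $\langle z,\cdot\rangle$ to the subgroup $\bz_B$ depends only on $\dot z=z+\bz^p$, which is precisely the statement $\langle z,g\rangle=\langle\dot z,g\rangle_o$.

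The only mildly delicate point is checking that $A\bz^p/\bz^p$ really has the right cardinality $\molt$ to be $\widehat{\bz_B}$ (a finite abelian group and its dual have equal order): $|A\bz^p/\bz^p|=[A\bz^p:\bz^p]=[\bz^p:A^{-1}\bz^p]=[\bz^p:B^T\bz^p]=|\det B^T|=|\det B|=\molt$, which matches part (1). I expect no real obstacle here; the whole lemma is a bookkeeping exercise with lattice duality, and the main thing to be careful about is keeping the transpose/inverse on $B$ straight when passing between $\bt_1$ and its dual.
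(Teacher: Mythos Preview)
Your proposal is correct and follows essentially the same approach as the paper. For part~(1) the paper uses the identical Smith normal form argument ($B=SDT$ with $S,T\in GL_p(\bz)$, hence $\bz_B\cong\bigoplus_i\bz_{d_i}$ of order $|\det B|$); for part~(2) the paper simply declares the duality statements elementary and gives no further detail, so your explicit dual-lattice computation and the dualization of the inclusion $\bz_B\hookrightarrow\bt_1$ are exactly the intended (and only natural) argument.
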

\begin{proof}
The proofs of the claims are all elementary. We only make some comments on the first one. It is well known that each finite abelian group is  the  direct sum of  cyclic groups and that the order of these groups can be obtained with the following procedure.   Let $D=SBT$ the Smith normal form of $B$, where $S, T\in GL(p, \bz)$ and $D=\diag(d_1,\cdots d_p)>0$. 
Therefore, we have that $\bz_B=\bz^p/B\bz^p\cong \bz^p/D\bz^p$. As $B$ is invertible, so is $D$ and all the diagonal elements are non-zero. Thus, $\bz_B=\bz_{d_1}\oplus\ldots \oplus \bz_{d_p}$ and $|\bz_B|=d_1\cdot \ldots \cdot d_p=\det(D)=\pm\det(B)$.
\end{proof}

Let us consider the short exact sequence of groups
\begin{align}
& 0\longrightarrow \bz^p\longrightarrow A\bz^p\longrightarrow\widehat{\bz_B}\longrightarrow 0. \label{seq0}
\end{align}
Such central extension $A\bz^p$ of $\widehat{\bz_B}$ via $\bz^p$ can be described either with a section $s:\widehat{\bz_B}\to A\bz^p$ or via a $\bz^p$-valued 2-cocycle $\o(k,k')=s(k)+s(k')-s(k+k')$, see e.g. \cite{Brown}.
We choose the unique section such that, for any $k\in \widehat{\bz_B}$, $s(k)\in
 [0,1)^p$. 

\begin{remark}
The mentioned choice of the section $s$ will play a role only later. For the moment, we only note that it implies $s(0)=0$, hence $\o(k,0)=0=\o(0,k)$.
\end{remark}

The covering we are studying is indeed regular according to Definition  \ref{unitaries}, since we may  construct the map $\sigma$ as follows:
\begin{equation}\label{map-sigma}
\s(k)(t):=\overline{\langle s(k),t\rangle}, \quad k\in\widehat{\bz_B}, t\in\bt_1.
\end{equation}

\subsection{Spectral triples on covering spaces of $\bt^p$} \label{ST-CS}

Given the integer-valued matrix $B\in M_p(\bz)$ as above, if $\bt^p$ is  identified with $\br^p/\bz^p$, then there is an associated self-covering $\pi:t\in\bt^p\mapsto Bt\in\bt^p$. We denote by $\a$ the induced endomorphism of $C(\bt^p)$, i.e. $\a(f)(t)=f(Bt)$. Then we consider the inductive limit $\ca_\infty=\displaystyle \varinjlim \ca_n$ described in \eqref{ind-lim-diag}, where $\ca_n=\ca$ for any $n$.

In the next pages it will be convenient to consider the following isomorphic inductive family: $\ca_n$ consists of continuous $B^n\bz^p$-periodic functions on $\br^p$, and the embedding is the inclusion. In this way $\ca_\infty$ may be identified with a generalized solenoid C$^*$-algebra (cf. \cite{McCord}, \cite{LP2}). 

Since $\bt_n=\br^p/B^n\bz^p$ is a covering space of $\bt_0:=\bt^p$, the formula of the Dirac operator on $\bt_n$ doesn't change. Therefore, we will consider the following spectral triple
\begin{eqnarray*}
	(C^1(\bt_n),  \bc^{2^{[p/2]}} \otimes L^2(\bt_n,\frac1{{\molt}^n}dm), D=-i\sum_{a=1}^p \eps^a \otimes\partial^a).
\end{eqnarray*}
The aim of this section is to describe the spectral triple on $\bt_n$ in terms of the spectral triple on $\bt_0$. Consider the short exact sequences of groups
\begin{align}
&0\longrightarrow B^{n}\bz^p\longrightarrow B^{n-1}\bz^p\longrightarrow{\bz_B}\longrightarrow 0,\label{seq1}\\
&0\longrightarrow A^{n-1}\bz^p\longrightarrow A^{n}\bz^p\longrightarrow\widehat{\bz_B}\longrightarrow 0\label{seq2},
\end{align}
where ${\bz_B}$ is now identified with the finite group in \eqref{seq1}, hence is a subgroup of $\bt_{n}$.
The central extension $A^{n}\bz^p$ of $\widehat{\bz_B}$ via $A^{n-1}\bz^p$ can be described either with a section $s_{n}:\widehat{\bz_B}\to A^{n}\bz^p$ or via a $A^{n-1}\bz^p$-valued 2-cocycle $\o_{n}(k,k')=s_{n}(k)+s_{n}(k')-s_{n}(k+k')$, see e.g. \cite{Brown}.
 We choose the unique section such that, for any $k\in \widehat{\bz_B}$, $s_{n}(k)\in
A^{n-1} [0,1)^p$, and observe that this is the same as choosing $s_n(k)=A^{n-1}s_1(k)$.
In the same way, the second extension $B^{n-1}\bz^p$ of $\bz_B$ via $B^{n}\bz^p$ can be described either with a section $\widehat{s}_{n}:\bz_B\to B^{n-1}\bz^p$ or via a $B^n\bz^p$-valued 2-cocycle $\widehat{\o}_{n}(k,k')=\widehat{s}_{n}(k)+\widehat{s}_{n}(k')-\widehat{s}_{n}(k+k')$. We choose the unique section such that, for any $k\in \bz_B$, $\widehat{s}_{n}(k)\in
B^{n} [0,1)^p$. 
The following result holds

\begin{proposition} \label{prop-2-4}
Any function $\xi$ on $\bt_i$ can be decomposed as $\xi=\sum_{k\in \widehat{\bz_B}}\xi_k$, where
\[
\xi_k(t)\equiv E_{k}(\xi)(t) = \frac1\molt \sum_{g\in \bz_B}\langle -k,g \rangle \xi(t-g), \quad t\in \bt_i=\br^p/B^i\bz^p.
\]
Moreover, this correspondence gives rise to unitary operators 
\begin{align*}
v_i: L^2(\bt_i, dm/r^i) & \to \sum\nolimits^\oplus_{k\in \widehat{\bz_B}} \ L^2(\bt_{i-1},dm/r^{i-1}) = L^2(\bt_{i-1},dm/r^{i-1})\otimes \bc^\molt \\
 \xi \quad & \mapsto \qquad \sum\nolimits^\oplus_{k\in \widehat{\bz_B}} \ \sigma(k)^{-1}\xi_k \, .
\end{align*}
The multiplication operator by the element $f\in\ca_i$  is mapped to the matrix $M_\molt(f)$ acting on $L^2(\bt_{i-1}, dm/r^{i-1})\otimes \bc^\molt$ given by
\[
M_\molt(f)_{j,k}(t) = \langle s(j)-s(k),t\rangle f_{j-k}(t), \qquad j,k\in \widehat{\bz_B}.
\]
In particular, when $f$ is $B^{i-1}\bz$-periodic, namely it is  a function on $\bt_{i-1}$, then $M_\molt(f)_{j,k}(t)
=f(t)\d_{j,k}$, i.e. a function $f$ on $\bt_{i-1}$ embeds into $\cb(L^2(\bt_{i-1}, dm/r^{i-1}))\otimes M_\molt(\bc)$ as $f\otimes I$.
\end{proposition}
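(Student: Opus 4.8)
The plan is to view the inclusion $\ca_{i-1}\subset\ca_i$, together with the translation action $\g$ of $\bz_B$ on $\ca_i=C(\bt_i)$, as a regular finite covering with abelian group in the sense of Section \ref{NC-coverings}, and then to specialize Proposition \ref{prop-1.1}, Theorem \ref{embedding} and Proposition \ref{Representation-covering} to it. First I would check regularity: the map $\s$ of \eqref{map-sigma} is well defined on every $\bt_i$, since $s(k)\in A\bz^p\subset A^{i}\bz^p$ forces $t\mapsto\overline{\langle s(k),t\rangle}$ to be $B^{i}\bz^p$-periodic, and, using the duality identifications of the preceding Lemma, $\g_g(\s(k))=\langle k,g\rangle\s(k)$, so $\s(k)$ is a unitary of $\ca_i$ lying in $(\ca_i)_k$ with $\s(0)=I$. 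Thus the hypotheses of Section \ref{NC-coverings} hold with $\ca=\ca_{i-1}$, $\cb=\ca_i$, $\Gamma=\bz_B$, $r=|\bz_B|$.

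The decomposition $\xi=\sum_{k}\xi_k$ and the explicit formula for $\xi_k$ are then Proposition \ref{prop-1.1}(3) together with the expression $E_k(b)=\frac1{|\Gamma|}\sum_g\langle k^{-1},g\rangle\g_g(b)$ from its proof, once one writes $\g_g(\xi)(t)=\xi(t-g)$ and reads $k^{-1}$ additively as $-k$. For the unitaries $v_i$ I would invoke Proposition \ref{Representation-covering}(2), which gives that $b\mapsto(\s(j)^{-1}b_j)_j$ extends to a unitary $L^2(\ca_i,\widetilde\f)\to L^2(\ca_{i-1},\f)\otimes\bc^{\molt}$ as soon as $\widetilde\f=\f\circ E_{\bz_B}$. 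Taking $\f=\tau_{i-1}$ and $\widetilde\f=\tau_i$, the identity $\tau_i=\tau_{i-1}\circ E_{\bz_B}$ holds since both are translation-invariant probability measures and hence coincide by uniqueness of Haar measure, exactly by the argument already used above to identify $\pi_1$ with $\widetilde\pi_0$; this yields $v_i$ in the stated form.

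For the multiplication operator, Proposition \ref{Representation-covering}(1) gives $v_i\,\pi_i(f)\,v_i^{*}=[\pi_{i-1}(M(f)_{j,k})]_{j,k}$, where $M(f)_{j,k}=\s(j)^{-1}f_{j-k}\s(k)\in\ca_{i-1}$ by Theorem \ref{embedding}. Substituting $\s(j)^{-1}(t)=\langle s(j),t\rangle$ and $\s(k)(t)=\overline{\langle s(k),t\rangle}$ gives $M(f)_{j,k}(t)=\langle s(j)-s(k),t\rangle f_{j-k}(t)$, i.e. exactly $M_{\molt}(f)_{j,k}(t)$. Note that for $j\ne k$ the factor $\langle s(j)-s(k),t\rangle$ alone is not $B^{i-1}\bz^p$-periodic, yet the product is: this is forced abstractly by $M(f)_{j,k}\in\ca_{i-1}$ and concretely reflects that $f_{j-k}\in(\ca_i)_{j-k}$ transforms under $\bz^p$-translations by the conjugate character. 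If $f$ is $B^{i-1}\bz^p$-periodic then $f\in(\ca_i)^{\bz_B}=\ca_{i-1}$, so $f_k=\d_{k,0}f$ by Schur orthogonality, and since $s(j)-s(j)=0$ we obtain $M_{\molt}(f)_{j,k}(t)=f(t)\d_{j,k}$, i.e. $f$ is represented as $f\otimes I$.

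The only genuinely delicate point — the main bookkeeping burden rather than a conceptual obstacle — is passing carefully between the abstract pairing $\langle k,g\rangle$ on $\widehat{\bz_B}\times\bz_B$ of Section \ref{NC-coverings} and the concrete pairing $\langle z,t\rangle=\exp\big(2\pi i\,z\cdot t\big)$ on $\br^p$, keeping track of the realizations $\bz_B=B^{i-1}\bz^p/B^{i}\bz^p$ and $\widehat{\bz_B}=A\bz^p/\bz^p$ and of the compatibility $s_n=A^{n-1}s_1$ of the chosen sections across the levels; with these identifications fixed, each assertion reduces to one of the cited results by a direct computation.
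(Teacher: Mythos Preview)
Your proposal is correct and follows precisely the approach the paper takes: the paper's own proof consists of the single sentence ``The statement follows from the analysis of Proposition \ref{Representation-covering}, in particular here $b_k=\xi_k$, $M(b) = M_r(f)$,'' and you have simply unpacked this reference in detail, verifying regularity via $\s$, invoking Proposition \ref{prop-1.1}(3) for the decomposition, Proposition \ref{Representation-covering}(2) for the unitary $v_i$, and Theorem \ref{embedding} together with Proposition \ref{Representation-covering}(1) for the matrix form of the multiplication operator. Your additional remarks on the Haar-measure identification $\tau_i=\tau_{i-1}\circ E_{\bz_B}$ and on the bookkeeping between the abstract and concrete pairings are exactly the points one must check to make the one-line proof rigorous.
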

\begin{proof}
The statement follows from the analysis of Proposition \ref{Representation-covering}, in particular here  $b_k=\xi_k$, $M(b) = M_r(f)$.
\end{proof}

\begin{theorem} \label{thm:tripleOnTn}
The Dirac operator $D_n$ acting on $\bc^{2^{[p/2]}} \otimes L^2(\bt_{n},\frac1{{\molt}^{n}}dm)$  gives rise to an operator, which we denote by $\widehat{D}_{n}$, when the Hilbert space is identified with the Hilbert space $\bc^{2^{[p/2]}} \otimes L^2(\bt_0,dm) \otimes (\bc^{\molt})^{\otimes n}$ as above. 
The Dirac operator $\widehat{D}_{n}$ 
has the following form:
\[
\widehat{D}_{n} = V_{n}D_nV_{n}^* = D_0\otimes I- 2\pi \sum_{a=1}^p \eps^a \otimes I \otimes \bigg( \sum_{h=1}^n I^{\otimes h-1} \otimes \diag(s_{h}(\cdot)^a) \otimes I^{\otimes n-h} \bigg),
\]
where  $\diag(s_{h}(\cdot)^a)_{j,k}=\delta_{j,k} s_j(k)^a$ for $j, k\in \widehat{\bz_B}$,  the unitary operator $V_n: \bc^{2^{[p/2]}} \otimes L^2(\bt_n,\frac{1}{r^n}dm) \to \bc^{2^{[p/2]}} \otimes L^2(\bt_0,dm)\otimes (\bc^{r})^{\otimes n}$ is defined as $V_n:= I \otimes [(v_1\otimes \bigotimes_{j=1}^{n-1} I)\circ (v_2\otimes \bigotimes_{j=1}^{n-2} I)\circ \cdots \circ v_n]$.
Moreover, we have the following spectral triple 
\begin{displaymath}
	(\cl_n:=C^1(\bt_n), \bc^{2^{[p/2]}} \otimes L^2(\bt_0, dm) \otimes (\bc^{\molt})^{\otimes n}, \widehat{D}_{n}). 
\end{displaymath}
\end{theorem}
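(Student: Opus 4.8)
The plan is to track the Dirac operator $D_n$ through the chain of unitaries $V_n$ step by step, using Proposition~\ref{prop-2-4} at each stage. The key observation is that $V_n$ is built by iterating the single-step unitary $v_i$, so it suffices to understand what conjugation by one $v_i$ does to $D = -i\sum_a \eps^a\otimes\partial^a$, and then compose. Fix one step. The operator $v_i$ decomposes $\xi$ on $\bt_i$ into its $\widehat{\bz_B}$-components $\xi_k$ and then sends $\xi_k\mapsto \s(k)^{-1}\xi_k$, where $\s(k)(t)=\overline{\langle s_i(k),t\rangle}$ (with the section $s_i(k)=A^{i-1}s_1(k)$ chosen above). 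Since $\partial^a$ commutes with translations, it preserves each eigenspace $\cb_k$, so the only effect comes from the conjugation $\partial^a\mapsto \s(k)\,\partial^a\,\s(k)^{-1}$ on the $k$-th summand. A direct computation of this conjugation of a first-order operator by the unimodular function $\s(k)$ gives $\s(k)\partial^a\s(k)^{-1} = \partial^a - (\partial^a\s(k))\s(k)^{-1} = \partial^a + 2\pi i\, s_i(k)^a$, because $\partial^a \overline{\langle s_i(k),t\rangle} = -2\pi i\, s_i(k)^a\,\overline{\langle s_i(k),t\rangle}$. Hence on $L^2(\bt_{i-1})\otimes\bc^\molt$ the operator $-i\eps^a\otimes\partial^a$ becomes $-i\eps^a\otimes(\partial^a\otimes I) + 2\pi\,\eps^a\otimes\diag(s_i(k)^a)$; summing over $a$ shows that one application of $v_i$ replaces $D_{i}$ by $D_{i-1}\otimes I - 2\pi\sum_a\eps^a\otimes\diag(s_i(\cdot)^a)$ acting on the last tensor leg.

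Next I would iterate. Applying $V_n = I\otimes[(v_1\otimes\bigotimes I)\circ\cdots\circ v_n]$, at the $h$-th stage the operator $v_h$ acts on a tensor leg that will eventually become the $h$-th copy of $\bc^\molt$, and by the single-step computation it contributes a term $-2\pi\sum_a\eps^a\otimes I\otimes(I^{\otimes h-1}\otimes\diag(s_h(\cdot)^a)\otimes I^{\otimes n-h})$, while the "base" part $-i\sum_a\eps^a\otimes\partial^a$ is carried down unchanged until it reaches $\bt_0$, where it becomes $D_0\otimes I$. One must check that the terms produced at different stages commute and add up without cross terms: this holds because each $\diag(s_h(\cdot))$ acts on a distinct tensor factor and because the conjugation performed by $v_h$ does not disturb the already-diagonalized factors from earlier steps (translations on $\bt_{h-1}$ act trivially on the $\bc^\molt$ legs already split off). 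Collecting the terms yields exactly the stated formula for $\widehat{D}_n$. The constant $2\pi$ rather than $2\pi i$ appears because the $-i$ in front of the original $\partial^a$ cancels the $i$ in $2\pi i\,s_i(k)^a$.

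Finally, to conclude that $(C^1(\bt_n),\,\bc^{2^{[p/2]}}\otimes L^2(\bt_0,dm)\otimes(\bc^\molt)^{\otimes n},\,\widehat D_n)$ is a spectral triple, I would note that $\widehat D_n = V_n D_n V_n^*$ is unitarily equivalent to the Dirac triple $(C^1(\bt_n),\bc^{2^{[p/2]}}\otimes L^2(\bt_n,\tfrac1{\molt^n}dm),D_n)$, which is the classical spin Dirac triple on the compact Riemannian manifold $\bt_n=\br^p/B^n\bz^p$ and hence a spectral triple: $D_n$ is self-adjoint with compact resolvent (the spectrum is governed by the dual lattice $A^n\bz^p$), and for $f\in C^1(\bt_n)$ the commutator $[D_n,f]=-i\sum_a\eps^a\otimes(\partial^a f)$ is bounded. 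All these properties are preserved under unitary conjugation, and the representation of $C^1(\bt_n)$ on the right-hand Hilbert space is the one given by the matrix embedding $M_\molt$ of Proposition~\ref{prop-2-4} (iterated $n$ times), which is exactly $V_n(\,\cdot\,)V_n^*$ applied to multiplication operators.

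The main obstacle is the bookkeeping in the iteration: one has to be careful about which tensor leg each $v_h$ acts on after the earlier $v$'s have been applied, and about the precise form of the section at stage $h$ (the identity $s_h(k)=A^{h-1}s_1(k)$ is what makes the $\diag(s_h(\cdot)^a)$ in the final formula consistent across levels). Once the single-step conjugation identity $\s(k)\partial^a\s(k)^{-1}=\partial^a+2\pi i\,s_h(k)^a$ is established, the rest is a careful but routine induction on $n$.
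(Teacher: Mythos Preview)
Your approach is essentially the paper's: carry out the single-step conjugation $v_1 D_1 v_1^*$ explicitly and then iterate, noting that $\partial^a$ commutes with translations so the computation is block-diagonal in $k\in\widehat{\bz_B}$. One slip to fix: since $v_i$ sends $\xi_k\mapsto\sigma(k)^{-1}\xi_k$, the block conjugation is $\sigma(k)^{-1}\partial^a\sigma(k)=\partial^a-2\pi i\,s_i(k)^a$ (not $\sigma(k)\partial^a\sigma(k)^{-1}$), which yields the correct sign $-2\pi$ directly rather than the $+2\pi$ you first obtain and then silently change in the next sentence.
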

\begin{proof}
First of all we prove the formula for $n=1$. 
We give a formula for $D_1$ acting on $\bc^{2^{[p/2]}} \otimes L^2(\bt_1,\frac1{\molt}dm) \cong \bc^{2^{[p/2]}} \otimes L^2(\bt_0,dm) \otimes \bc^{\molt}$.
Let us denote by $\{ \eta_k\}_{k\in\widehat{\bz_B}}$ a ${\molt}$-tuple of vectors in $\bc^{2^{[p/2]}} \otimes L^2(\bt_0,dm)$, so that
$\xi := \sum_{k\in\widehat{\bz_B}} \s(k)\eta_k$ is an element in $\bc^{2^{[p/2]}} \otimes L^2(\bt_1,\frac1{\molt} dm)$, and $E_k(\xi)=\s(k)\eta_k$, $k\in\widehat{\bz_B}$. Then, for any $t\in\bt_1$, we get
\begin{align*}
\widehat{D}_{1} \big( \sum\nolimits^\oplus_{k\in \widehat{\bz_B}} \eta_k(t) \big)
& = V_{1}D_1V_{1}^* \big( \sum\nolimits^\oplus_{k\in \widehat{\bz_B}} \eta_k(t) \big)  \\
& = \sum\nolimits^\oplus_{j\in \widehat{\bz_B}} \frac1{\molt} \langle s(j),t\rangle  \sum_{g\in\bz_B} \langle -j,g \rangle D \Big( \sum_{ k\in \widehat{\bz_B}} \langle s(k),-t+g\rangle \eta_k(t-g) \Big)  \\
& = \sum\nolimits^\oplus_{j\in \widehat{\bz_B}} \sum_{ k\in \widehat{\bz_B}} \frac1{\molt} \langle s(j),t\rangle  \sum_{g\in\bz_B} \langle k-j,g \rangle D \big(  \langle s(k),-t\rangle \eta_k(t) \big)  \\
& = \sum\nolimits^\oplus_{k\in \widehat{\bz_B}}  \langle s(k),t\rangle   D \big(  \langle s(k),-t\rangle \eta_k(t) \big)  \\
& = -i \sum_{a=1}^p \sum\nolimits^\oplus_{k\in \widehat{\bz_B}}  \langle s(k),t\rangle   \eps^a  \partial^a \big( \langle s(k),-t\rangle \eta_k(t) \big)  \\
& = -i \sum_{a=1}^p \sum\nolimits^\oplus_{k\in \widehat{\bz_B}}    \eps^a \big(  -2\pi is(k)^a\eta_k(t) + \partial^a \eta_k(t) \big)  \\
& =  \sum_{a=1}^p \Big( -2\pi \eps^a \otimes I \otimes \diag( s(k)^a )_{k\in \widehat{\bz_B}} - i \eps^a \otimes \partial^a \otimes I \Big) \sum\nolimits^\oplus_{k\in \widehat{\bz_B}}   \eta_k(t) .
\end{align*}
The formula for $n>1$ can be obtained by iterating the above procedure.
\end{proof}

\subsection{The inductive limit spectral triple}

The aim of this section is to construct a spectral triple for the inductive limit $\varinjlim \ca_n$. We begin with some preliminary results.
A matrix $B\in M_p(\bz)$ is called purely expanding if, for all vectors $v\neq 0$, we have that $\|B^n v\|$ goes to infinity.

\begin{proposition}\label{prop-pur-exp}
Assume $\det B\ne0$, $A=(B^T)^{-1}$. Then the following  are equivalent:
\begin{enumerate}
\item[$(1)$] $B$ is purely expanding, 
\item[$(2)$]  $\|A^n\|\to 0$,
\item[$(3)$] the spectral radius ${\rm spr}(A)<1$,
\item[$(4)$] $\sum_{n\geq 0} \|A^n\|<\infty$.
\end{enumerate}
\end{proposition}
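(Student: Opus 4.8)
The proposition asks to show four conditions on a matrix $B$ with $\det B \ne 0$ (equivalently on $A = (B^T)^{-1}$) are equivalent. I would prove this by establishing a cycle of implications, but since several of these are standard facts from linear algebra / spectral theory, the cleanest route is to relate everything to the spectral radius of $A$ and use the spectral radius formula $\mathrm{spr}(A) = \lim_n \|A^n\|^{1/n}$.

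\emph{From (3) to (4) to (2):} If $\mathrm{spr}(A) < 1$, pick $\rho$ with $\mathrm{spr}(A) < \rho < 1$. By the spectral radius formula, $\|A^n\|^{1/n} \to \mathrm{spr}(A)$, so $\|A^n\| \le \rho^n$ for all large $n$, hence $\sum_n \|A^n\|$ converges by comparison with a geometric series; this gives (3)$\Rightarrow$(4). Trivially (4)$\Rightarrow$(2) since the terms of a convergent series tend to $0$.

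\emph{From (2) to (3):} If $\mathrm{spr}(A) \ge 1$, there is an eigenvalue $\lambda$ of $A$ (over $\bc$) with $|\lambda| \ge 1$ and an eigenvector $v \ne 0$, and then $\|A^n v\| = |\lambda|^n \|v\| \not\to 0$, so $\|A^n\| \not\to 0$. Contrapositively, (2)$\Rightarrow$(3). At this point (2), (3), (4) are mutually equivalent.

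\emph{Linking (1):} It remains to connect "$B$ purely expanding" (i.e. $\|B^n v\| \to \infty$ for every $v \ne 0$) to the others. I would show (2)$\Rightarrow$(1) and (1)$\Rightarrow$(3). For (2)$\Rightarrow$(1): since $A = (B^T)^{-1}$, for any $w \ne 0$ we have $\|w\| = \|A^n (B^T)^n w\| \le \|A^n\|\,\|(B^T)^n w\|$, so $\|(B^T)^n w\| \ge \|w\| / \|A^n\| \to \infty$; thus $B^T$ is purely expanding, and since $B$ and $B^T$ have the same singular values (hence $\|B^n v\|$ and $\|(B^T)^n v\|$ have comparable growth — more carefully, $\mathrm{spr}$ arguments or the fact that $B^T$ purely expanding $\iff$ $\mathrm{spr}((B^T)^{-1}) < 1 \iff \mathrm{spr}(B^{-1}) < 1 \iff B$ purely expanding) we get (1). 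For (1)$\Rightarrow$(3): if $\mathrm{spr}(A) \ge 1$, take $\lambda$ an eigenvalue of $B^{-1} = A^T$ with $|\lambda| \ge 1$; then $B$ has eigenvalue $\lambda^{-1}$ with $|\lambda^{-1}| \le 1$, and on the corresponding (complexified, possibly generalized) eigenspace $\|B^n v\|$ stays bounded, contradicting pure expansion. The mild subtlety here is handling complex eigenvalues of a real matrix and non-diagonalizable Jordan blocks; for a Jordan block with eigenvalue $\mu$, $|\mu| \le 1$, the norms $\|J^n v\|$ grow at most polynomially, so they do not go to infinity if $|\mu| < 1$ and are bounded (actually could grow polynomially) if $|\mu| = 1$ — in the borderline $|\mu|=1$ case one should pick $v$ to be a genuine eigenvector so that $\|B^n v\| = \|v\|$ is constant.

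\emph{Main obstacle.} Nothing here is deep; the only place demanding care is the equivalence involving (1), specifically making the Jordan-block / complex-eigenvalue bookkeeping rigorous so that "$\|B^n v\| \to \infty$ for all real $v \ne 0$" is correctly seen to be equivalent to "all eigenvalues of $B$ have modulus $> 1$" (equivalently $\mathrm{spr}(B^{-1}) < 1$, equivalently $\mathrm{spr}(A) < 1$). I would phrase this via: real pure expansion of $B$ $\iff$ complex pure expansion of $B$ on $\bc^p$ (since a real $v$ spans a complex line, and conversely any complex vector decomposes into real and imaginary parts) $\iff$ every Jordan block of $B$ is purely expanding $\iff$ every eigenvalue has modulus $>1$, using that a single Jordan block with eigenvalue $\mu$ is purely expanding iff $|\mu| > 1$ (the borderline $|\mu| = 1$ failing on an eigenvector). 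Once that dictionary is in place, the chain (1)$\iff$(3)$\iff$(2)$\iff$(4) is immediate.
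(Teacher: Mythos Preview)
Your argument is essentially correct, but the route differs from the paper's in two places, and your handling of the link with (1) is a bit tangled.

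\textbf{Where you differ.} For $(3)\Rightarrow(4)$ you invoke Gelfand's formula $\mathrm{spr}(A)=\lim_n\|A^n\|^{1/n}$, which is slicker than the paper's explicit Jordan-form estimate (the paper writes $A=C^{-1}(D+N)C$ and bounds $\|(D+N)^n\|$ by a binomial sum). For the link with (1), the paper proves $(1)\Leftrightarrow(2)$ directly and elegantly: setting $w=B^n v/\|B^n v\|$ shows (1) is equivalent to $\|B^{-n}u\|\to0$ for all $u\ne0$, and then the duality $(A^n v,u)=(v,B^{-n}u)$ ties this to (2). This avoids any eigenvalue or Jordan-block bookkeeping. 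Your route instead goes through $(1)\Leftrightarrow(3)$ via the characterization ``$B$ purely expanding iff every eigenvalue of $B$ has modulus $>1$'', which is fine but forces you to handle complex eigenvectors and the $|\mu|=1$ Jordan case explicitly.

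\textbf{A logical wrinkle to clean up.} Your $(2)\Rightarrow(1)$ paragraph proves only that $B^T$ is purely expanding, and then the parenthetical ``$B^T$ purely expanding $\iff \mathrm{spr}((B^T)^{-1})<1 \iff \mathrm{spr}(B^{-1})<1 \iff B$ purely expanding'' invokes $(1)\Leftrightarrow(3)$ before you have proved it. The fix is either to drop this detour entirely (your final paragraph already gives $(1)\Leftrightarrow(3)$, and with $(2)\Leftrightarrow(3)$ you are done), or to observe at the outset that $\|A^n\|=\|(A^T)^n\|=\|B^{-n}\|$, so the same inequality $\|w\|\le\|B^{-n}\|\,\|B^n w\|$ gives $B$ purely expanding directly, with no transpose needed. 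Either way the proof closes; the paper's normalization-and-duality trick for $(1)\Leftrightarrow(2)$ is simply a cleaner way to avoid the whole issue.
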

\begin{proof}
$(1)\Leftrightarrow(2)$
Consider a vector $w=B^nv/\|B^nv\|$, then, from the identity
$$
\|B^nw\|=\frac{\|v\|}{\|B^nv\|},
$$
we deduce that $(1)$ is equivalent to $\|B^{-n}v\|\to 0$, for all $v\neq 0$. The latter is equivalent to $(2)$ by the identity $(A^nv,u)=(v,B^{-n}u)$, for any vectors $u,v$.

$(2)\Rightarrow (3)$ We argue by contradiction. Let $\lambda\in {\rm sp}(A)$ have modulus $|\lambda| \geq 1$, and consider an associated eigenvector $v\neq 0$. Then, we have that $\|A^nv\| = |\lambda|^n \|v\| \not\to 0$.

$(3)\Rightarrow (4)$ Let $A= C^{-1}(D+N)C$ be the Jordan decomposition of $A$, where $D$ is the diagonal part, and $N$ the nilpotent one. Then
\begin{equation}\label{stima-norma}
\begin{aligned}
	\|(D+N)^n\|&= \| \sum_{j=0}^{p-1} \binom{n}{j} D^{n-j} N^j\| \leq  \sum_{j=0}^{p-1} \binom{n}{j} \|D^{n-j}\|\\
&= \sum_{j=0}^{p-1} \binom{n}{j} {\rm spr}(A)^{n-j}
\leq {\rm spr}(A)^n \left(\sum_{j=0}^{p-1} n^j {\rm spr}(A)^{-j}\right)\\
&={\rm spr}(A)^n\frac{(n/{\rm spr}(A))^p-1}{n/{\rm spr}(A)-1} < \frac{n^p}{n-1}{\rm spr}(A)^{n-p},
\end{aligned} 
\end{equation}
where we used  $N^p=0$, $\|N^j\|\leq 1$,  $\|D\|={\rm spr}(A)$, so that the series $\sum_{n\geq 0} \|A^n\|$ converges.

$(4)\Rightarrow (2)$ is obvious.
\end{proof}

\begin{theorem}
Assume now that $B$ is purely expanding 
 and consider the C$^*$-algebras $\ca_n=C(\br^p/B^n\bz^p)$, which embed into $M_{2^{[p/2]}}(\bc) \otimes \cb(L^2(\bt_0,dm)) \otimes M_{{\molt}^n}(\bc)$,  and the Dirac operators $\widehat{D}_{n}\widehat{\in} \cb(\ch_0) \otimes UHF({\molt}^\infty)$, where $\ch_0:= \bc^{2^{[p/2]}} \otimes L^2(\bt_0,dm)$.
As a consequence, $\ca_\infty$ embeds in the injective limit 
$$
\varinjlim \cb(\ch_0)\otimes M_{{\molt}^n}(\bc) = \cb(\ch_0) \otimes \mathrm{UHF}({\molt}^\infty)
$$
hence in $\cb(\ch_0)\otimes \car$, where $\car$ is the injective type II$_1$ factor.
Moreover, the  operator $\widehat{D}_{\infty}$ has the following form:
\[
\widehat{D}_{\infty} = D_0 \otimes I - 2\pi \sum_{a=1}^p \eps^a \otimes I \otimes \bigg(  \sum_{h=1}^\infty I^{\otimes h-1} \otimes \diag(s_{h}(\cdot)^a)  \bigg).
\]
In particular, $\widehat{D}_{\infty}$  is affiliated to $\cb(\ch_0) \otimes \car = \cam$ and has the form $D_{0}\otimes I+C$, with $C=C^*\in \cb(\ch_0) \otimes {\rm UHF}(r^\infty) \subset \cb(\ch_0) \otimes \car = \cam$.
\end{theorem}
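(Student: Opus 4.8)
The plan is to read off all the structural assertions from the explicit form of $\widehat D_n$ given in Theorem \ref{thm:tripleOnTn}, and to control the $n\to\infty$ limit by means of the equivalent characterisations of purely expanding matrices in Proposition \ref{prop-pur-exp}. For the embeddings: iterating Proposition \ref{prop-2-4} (equivalently, applying Theorem \ref{embedding} $n$ times and tensoring with the spin factor $\bc^{2^{[p/2]}}$, on which $\ca_n$ acts as the identity) realises each $\ca_n=C(\br^p/B^n\bz^p)$ inside $M_{2^{[p/2]}}(\bc)\otimes\cb(L^2(\bt_0,dm))\otimes M_{\molt^n}(\bc)$, and the last assertion of Proposition \ref{prop-2-4} — that a function already living on $\bt_{i-1}$ is sent to $f\otimes I$ — shows these embeddings intertwine the connecting maps $\a$ of \eqref{ind-lim-diag}. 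Hence, exactly as in Theorem \ref{embedding2}, $\ca_\infty=\varinjlim\ca_n$ embeds into $\varinjlim\cb(\ch_0)\otimes M_{\molt^n}(\bc)=\cb(\ch_0)\otimes\mathrm{UHF}(\molt^\infty)$, and composing with the GNS representation of the unique tracial state of $\mathrm{UHF}(\molt^\infty)$ yields the embedding into $\cb(\ch_0)\otimes\car=\cam$.

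Next I would split, following Theorem \ref{thm:tripleOnTn}, $\widehat D_n=D_0\otimes I+C_n$ with
\[
C_n=-2\pi\sum_{h=1}^n T_h^{(n)},\qquad T_h^{(n)}=\sum_{a=1}^p\eps^a\otimes I\otimes I^{\otimes h-1}\otimes\diag(s_h(\cdot)^a)\otimes I^{\otimes n-h}.
\]
Block-diagonalising $T_h^{(n)}$ over $k\in\widehat{\bz_B}$ in the $h$-th tensor factor and using the Clifford relations $\eps^a\eps^b+\eps^b\eps^a=2\delta^{ab}I$, which give $\big(\sum_a c^a\eps^a\big)^2=|c|^2I$ for every real vector $c$, one obtains $\|T_h^{(n)}\|=\max_{k}|s_h(k)|$; since $s_h(k)=A^{h-1}s_1(k)$ with $s_1(k)\in[0,1)^p$, this is at most $\sqrt p\,\|A^{h-1}\|$. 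By Proposition \ref{prop-pur-exp} the purely expanding hypothesis is equivalent to $\sum_{h\ge0}\|A^h\|<\infty$, so the series $\sum_h T_h^{(n)}$ is absolutely norm-convergent, uniformly in $n$; moreover the difference between $C_{n+1}$ and the image of $C_n$ is $-2\pi T_{n+1}^{(n+1)}$, of norm $\le2\pi\sqrt p\,\|A^n\|\to0$, so, viewed through the inclusions $\cb(\ch_0)\otimes M_{\molt^n}(\bc)\hookrightarrow\cb(\ch_0)\otimes M_{\molt^{n+1}}(\bc)$, the $C_n$ form a Cauchy sequence.

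This sequence therefore converges in norm to an element $C$ of the norm-closed C$^*$-algebra $\cb(\ch_0)\otimes\mathrm{UHF}(\molt^\infty)\subset\cam$, self-adjoint because every $\eps^a$ and every $\diag(s_h(\cdot)^a)$ is, and whose explicit form is the displayed series for $\widehat D_\infty-D_0\otimes I$. Finally, $D_0$ is (essentially) self-adjoint on $\ch_0$ with bounded Borel functions in $\cb(\ch_0)$, so $D_0\otimes I$ is self-adjoint and affiliated to $\cb(\ch_0)\otimes\bc I\subseteq\cam$; being its bounded self-adjoint perturbation by $C\in\cam$, the operator $\widehat D_\infty:=D_0\otimes I+C$ is self-adjoint, and for $|\mathrm{Im}\,z|>\|C\|$ the Neumann expansion $(\widehat D_\infty-z)^{-1}=\sum_{m\ge0}(D_0\otimes I-z)^{-1}\big(-C(D_0\otimes I-z)^{-1}\big)^m$ shows its resolvent lies in $\cam$, whence $\widehat D_\infty$ is affiliated to $\cam=\cb(\ch_0)\otimes\car$; running the same argument with $C_n$ in place of $C$ gives $\widehat D_n$ affiliated to $\cb(\ch_0)\otimes M_{\molt^n}(\bc)$, hence to $\cb(\ch_0)\otimes\mathrm{UHF}(\molt^\infty)$.

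The only genuinely substantive step is the norm-convergence of $(C_n)$, and it is precisely there that both hypotheses are consumed: the purely expanding condition (through $\sum\|A^n\|<\infty$, Proposition \ref{prop-pur-exp}) and the ``minimal'' choice of section, which keeps $\sup_k|s_1(k)|$ finite so that the bound $\|T_h^{(n)}\|\le\sqrt p\,\|A^{h-1}\|$ holds. Everything else is bookkeeping with the inductive tower and the standard perturbation theory of operators affiliated to a von Neumann algebra.
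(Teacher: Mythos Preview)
Your proof is correct and follows essentially the same route as the paper: bound $\|s_h(k)\|$ by $\sqrt p\,\|A^{h-1}\|$ using the choice $s_h(k)\in A^{h-1}[0,1)^p$, then invoke Proposition \ref{prop-pur-exp} to conclude that the series defining $C$ converges in norm inside $\cb(\ch_0)\otimes\mathrm{UHF}(\molt^\infty)$. Your write-up is in fact more thorough than the paper's --- the Clifford identity $\big(\sum_a c^a\eps^a\big)^2=|c|^2I$ to compute $\|T_h^{(n)}\|$ exactly, the explicit Cauchy argument for $(C_n)$, and the Neumann-series justification of affiliation are all details the paper leaves implicit.
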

\begin{proof}
The formula and the fact that $\widehat{D}_{\infty}$ is affiliated to $\cam$ follow from what has already been proved and the following argument.
We posed $s_{n}(k)\in A^{n-1} [0,1)^p$, therefore 
$$
\max_{k\in\widehat{\bz_B}}\|s_{n}(k)\|\leq\sup_{x\in[0,1)^p}\|A^{n-1}x\|\leq \|A^{n-1}\|\sqrt p.
$$
As a consequence, for any $a\in\{1,\ldots,p\}$,
$$
\| \diag(s_n(k)_a)_{k\in\widehat{\bz_B}}\| = \max_{k\in\widehat{\bz_B}} |s_n(k)_a|\leq \max_{k\in\widehat{\bz_B}} \|s_{n}(k)\| \leq \|A^{n-1} \| \sqrt p.
$$
Recalling that $\widehat{D}_{\infty}=D_{0}\otimes I+C$, with $C=2\pi  \displaystyle \sum_{a=1}^p \eps^a \otimes I \otimes \bigg(  \sum_{h=1}^\infty I^{\otimes h-1} \otimes \diag(s_{h}(k)^a)  \bigg)$, we get, by Proposition \ref{prop-pur-exp} and the estimate above, that $C$ is bounded and belongs to $M_{2^{[p/2]}}(\bc) \otimes \bc\otimes \mathrm{UHF}({\molt}^\infty)$, while $D_{0} \widehat{\in} \cb(\ch_0)$.
\end{proof}

\begin{theorem} \label{InductiveLimitTriple}
Let $\{ (\ca_n,\f_n) : n\in\bno\}$ be an inductive system, with $\ca_n\cong\ca_0$, and $\f_n:\ca_n\hookrightarrow \ca_{n+1}$ is the inclusion, for all $n\in\bn$. Suppose that, for any $n\in\bno$, there exists a spectral triple $(\cl_n,\ch_n,\widehat{D}_n)$ on $\ca_n$, with $\ch_n=\ch_0\otimes(\bc^\molt)^{\otimes n}$, $\widehat{D}_n=D_0\otimes I+C_n$,   $C_n\in \cb(\ch_0) \otimes M_{\molt}(\mathbb{C})^{\otimes n}\subset \cb(\ch_0) \otimes UHF(\molt^\infty)$ is a self-adjoint sequence converging to $C\in \cb(\ch_0) \otimes UHF(\molt^\infty)$, and $\widehat{D}_\infty=D_0\otimes I+ C$.  Let $\som$ be the abscissa of convergence of $\zeta_{D_0}$ and suppose that $\res_{s=\som}(\t(D_{0}^2+1)^{-s/2})$ exists and is finite. Let $\cl_\infty:=\cup_{n=0}^\infty \cl_n$. Then  $(\cl_\infty,\cb(\ch_0)\otimes\car, \ch_0\otimes L^2(\car,\tau), \widehat{D}_\infty)$ is a finitely summable, semifinite, spectral triple, with the same Hausdorff dimension of $(\cl_0,\ch_0,D_0)$.
Moreover, the volume of this noncommutative manifold coincides with the volume of $(\cl_0,\ch_0,D_0)$, namely the Dixmier trace $\t_\o$ of $(\widehat{D}_{\infty}^2+1)^{-\som/2}$ coincides with that of $(D_{0}^2+1)^{-\som/2}$ (hence does not depend on $\o$) and may be written as:
$$
\t_\o((\widehat{D}_{\infty}^2+1)^{-\som/2})=\lim_{t\to\infty}\frac1{\log t}\int_0^t \left(\m_{(D_{0}^2+1)^{-1/2}}(s)\right)^\som \,ds.
$$
\end{theorem}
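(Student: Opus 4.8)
Throughout write $\cam=\cb(\ch_0)\bar\otimes\car$ for the von Neumann algebra acting standardly on $\ch_0\otimes L^2(\car,\tau)$, equipped with the faithful normal semifinite trace obtained by tensoring the trace on $\cb(\ch_0)$ which defines $\zeta_{D_0}$ with the tracial state on $\car$; I will denote this trace again by $\tau$ and write $\mu_t(\cdot)$ for the associated generalised singular value function. The plan is to verify the axioms of a finitely summable semifinite spectral triple by reducing everything to the corresponding properties of $(\cl_0,\ch_0,D_0)$ through a bounded-perturbation comparison, and then to extract the volume statement from a Tauberian theorem. The elementary points come first. Since $\widehat D_\infty=D_0\otimes I+C$ with $D_0\otimes I$ affiliated to $\cb(\ch_0)\otimes\bc\subset\cam$ and $C=C^*\in\cb(\ch_0)\otimes\mathrm{UHF}(\molt^\infty)\subset\cam$ bounded, $\widehat D_\infty$ is self-adjoint and affiliated to $\cam$. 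The algebra $\cl_\infty$ acts on $\ch_0\otimes L^2(\car,\tau)$ as the inductive limit of the compatible representations of the $\cl_n$; given $a\in\cl_\infty$ choose $n$ with $a\in\cl_n$, and use that $(\cl_n,\ch_n,\widehat D_n)$ is a spectral triple: then $[D_0\otimes I,a]=[\widehat D_n,a]-[C_n,a]$ is bounded because $[\widehat D_n,a]$ is and $C_n$ is bounded, whence $[\widehat D_\infty,a]=[D_0\otimes I,a]+[C,a]$ is bounded.

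For $\tau$-compactness and finite summability I would compare $\widehat D_\infty$ with $D_0\otimes I$. As $C$ is bounded, the trivial estimate $\|\widehat D_\infty\xi\|^2\le 2\|(D_0\otimes I)\xi\|^2+2\|C\|^2\|\xi\|^2$ (and its symmetric counterpart) yields, with a suitable constant $\kappa\ge1$,
\[
\kappa^{-1}\bigl((D_0\otimes I)^2+1\bigr)\ \le\ \widehat D_\infty^2+1\ \le\ \kappa\bigl((D_0\otimes I)^2+1\bigr).
\]
Using monotonicity of generalised $s$-numbers under operator inequalities, $\mu_t(A^r)=\mu_t(A)^r$ for $A\ge0$, and anti-monotonicity of the inverse, this gives $\kappa^{-1/2}\mu_t(B)\le\mu_t((\widehat D_\infty^2+1)^{-1/2})\le\kappa^{1/2}\mu_t(B)$ for $B:=((D_0\otimes I)^2+1)^{-1/2}=(D_0^2+1)^{-1/2}\otimes I$. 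Since the tracial state of $\car$ is normalised, $E_{(\lambda,\infty)}(B)=E_{(\lambda,\infty)}\bigl((D_0^2+1)^{-1/2}\bigr)\otimes I$ has the same $\tau$-distribution function as $E_{(\lambda,\infty)}\bigl((D_0^2+1)^{-1/2}\bigr)$, so $\mu_t(B)=\mu_t\bigl((D_0^2+1)^{-1/2}\bigr)$. As $(\cl_0,\ch_0,D_0)$ is a spectral triple, $\mu_t\bigl((D_0^2+1)^{-1/2}\bigr)\to0$, hence $(\widehat D_\infty^2+1)^{-1/2}$ is $\tau$-compact (so $a(\widehat D_\infty^2+1)^{-1/2}$ is $\tau$-compact for every $a\in\cl_\infty$), and $\tau\bigl((\widehat D_\infty^2+1)^{-s/2}\bigr)=\int_0^\infty\mu_t\bigl((\widehat D_\infty^2+1)^{-1/2}\bigr)^s\,dt$ is finite for $s>\som$ and infinite for $s<\som$, exactly as for $\zeta_{D_0}$. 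Therefore $(\cl_\infty,\cam,\ch_0\otimes L^2(\car,\tau),\widehat D_\infty)$ is a finitely summable semifinite spectral triple of metric (Hausdorff) dimension $\som$.

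For the volume one must sharpen the comparison. Write $R:=\widehat D_\infty^2-(D_0\otimes I)^2=(D_0\otimes I)C+C(D_0\otimes I)+C^2$; from $\pm\bigl((D_0\otimes I)C+C(D_0\otimes I)\bigr)\le\delta(D_0\otimes I)^2+\delta^{-1}\|C\|^2$ for every $\delta>0$, $R$ is a form-perturbation of $(D_0\otimes I)^2$ with relative bound $0$, so for each $\delta>0$ there is $b_\delta>0$ with
\[
(1-\delta)\bigl((D_0\otimes I)^2+1\bigr)-b_\delta\ \le\ \widehat D_\infty^2+1\ \le\ (1+\delta)\bigl((D_0\otimes I)^2+1\bigr)+b_\delta .
\]
Because $(\widehat D_\infty^2+1)^{-1}$ is $\tau$-compact, $\mu_t(\widehat D_\infty^2+1)\to\infty$, so replacing $\widehat D_\infty^2+1$ by $\widehat D_\infty^2+1\pm b_\delta$ leaves the $t\to\infty$ asymptotics of $\mu_t\bigl((\widehat D_\infty^2+1)^{-1/2}\bigr)$ unchanged; combining this with the displayed bound and $\mu_t(B)=\mu_t\bigl((D_0^2+1)^{-1/2}\bigr)$ gives $(1+\delta)^{-1/2}\le\liminf_{t\to\infty}\mu_t\bigl((\widehat D_\infty^2+1)^{-1/2}\bigr)/\mu_t\bigl((D_0^2+1)^{-1/2}\bigr)\le\limsup\le(1-\delta)^{-1/2}$, and letting $\delta\to0$ the ratio tends to $1$; hence so does the ratio of the $\som$-th powers. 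Finally, the hypothesis that $\res_{s=\som}\bigl(\tau(D_0^2+1)^{-s/2}\bigr)$ exists and is finite means $\lim_{s\to\som^+}(s-\som)\,\zeta_{D_0}(s)$ exists, so by the Tauberian theorem for the Dixmier trace (\cite{ConnesBook}, in its semifinite form) the limit $\lim_{t\to\infty}\frac1{\log t}\int_0^t\mu_s\bigl((D_0^2+1)^{-1/2}\bigr)^\som\,ds$ exists; since the integrand is asymptotically equivalent to $\mu_s\bigl((\widehat D_\infty^2+1)^{-1/2}\bigr)^\som$, the same limit exists for $\widehat D_\infty$. Thus $(\widehat D_\infty^2+1)^{-\som/2}$ is measurable, $\tau_\o$ of it is independent of $\o$, equals the displayed expression, and equals $\tau_\o\bigl((D_0^2+1)^{-\som/2}\bigr)$ by the same Tauberian identity. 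The main obstacle is precisely this last paragraph: upgrading mere comparability of generalised $s$-numbers (which already suffices for $\tau$-compactness and finite summability) to the asymptotic equality $\mu_t\bigl((\widehat D_\infty^2+1)^{-1/2}\bigr)\sim\mu_t\bigl((D_0^2+1)^{-1/2}\bigr)$ needed to match Dixmier traces, which hinges on $C$ producing only a lower-order form-perturbation of $(D_0\otimes I)^2$ together with the insensitivity of the tail of the $s$-number function to bounded scalar shifts.
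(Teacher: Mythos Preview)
Your argument is correct and reaches the same conclusion as the paper, but the route is genuinely different in the two technical steps.

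For the $\tau$-compact resolvent the paper writes the resolvent of $\widehat D_\infty$ algebraically as
\[
(D_0\otimes I+C+i)^{-1}=(D_0\otimes I+i)^{-1}\bigl[I+C(D_0\otimes I+i)^{-1}\bigr]^{-1},
\]
checks that the bracketed factor is boundedly invertible (using self-adjointness of $D_0\otimes I+C$), and concludes $\tau$-compactness from that of $(D_0\otimes I+i)^{-1}$. You instead use the two-sided quadratic inequality $\kappa^{-1}\bigl((D_0\otimes I)^2+1\bigr)\le\widehat D_\infty^2+1\le\kappa\bigl((D_0\otimes I)^2+1\bigr)$ and monotonicity of generalised $s$-numbers. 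Both are short; yours has the advantage that it already produces the $s$-number comparison needed later.

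For the volume the paper packages the bounded-perturbation stability into an appendix result (Proposition~\ref{sameRes}, via Lemmas~\ref{varie}--\ref{sameResB}) working directly with distribution functions $\Lambda_T(s)=\tau(e_T(-s,s))$ to show that $\res_{s=\som}\tau\bigl((T^2+1)^{-s/2}\bigr)$ is unchanged when $T$ is replaced by $T+C$; then it quotes \cite[Thm.~4.11]{CRSS} to pass from the residue to the Dixmier trace formula. You instead establish the sharper pointwise asymptotic $\mu_t\bigl((\widehat D_\infty^2+1)^{-1/2}\bigr)\sim\mu_t\bigl((D_0^2+1)^{-1/2}\bigr)$ via the relative-bound-zero form estimate, and then transfer the logarithmic average directly. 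Your approach is more hands-on and yields a stronger intermediate statement (asymptotic equality of $s$-numbers rather than just equality of residues); the paper's approach is more modular and isolates a reusable perturbation lemma. Both invoke the same Tauberian theorem (what you cite loosely as the semifinite version of the result in \cite{ConnesBook} is exactly \cite[Thm.~4.11]{CRSS}).

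One small point to tighten: in your form-perturbation step the lower bound $(1-\delta)\bigl((D_0\otimes I)^2+1\bigr)-b_\delta$ need not be positive everywhere; you implicitly restrict to the spectral subspace where it is, the complement having finite $\tau$-trace, and this is harmless for the $t\to\infty$ asymptotics. It is worth saying so explicitly.
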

\begin{proof}

As for the commutator condition, we observe that for each $f\in \cl_n$ we have that $[\widehat{D}_{\infty},f]$ is bounded since $[\widehat{D}_{n},f]$ is bounded.

We now show that $\widehat{D}_{\infty}$ has $\t$-compact resolvent, where $\t$ is the unique f.n.s. trace on $\cb(\ch_0)\otimes\car$. Indeed, on a finite factor, any bounded operator has $\t$-finite rank, hence is $\t$-compact. Therefore, since $D_{0}$ has compact resolvent in $\cb(\ch_0)$, $D_{0}\otimes I$ has $\t$-compact resolvent in $\cb(\ch_0)\otimes \car$. We have $(D_{0}\otimes I+C+i)^{-1}=[I+(D_{0}\otimes I+i)^{-1}C]^{-1}(D_{0}\otimes I+i)^{-1} = (D_{0}\otimes I+i)^{-1}[I+C(D_{0}\otimes I+i)^{-1}]^{-1}$, where $I+C(D_{0}\otimes I+i)^{-1}$ and $I+(D_{0}\otimes I+i)^{-1}C$ have trivial kernel and cokernel. Indeed $Ran (I+(D_{0}\otimes I+i)^{-1}C)^\perp=\ker(I+C(D_{0}\otimes I-i)^{-1})$, and $(I+C(D_{0}\otimes I\pm i)^{-1})x=0$ means $(C+D_{0}\otimes I)y=\mp i y$ with $y=(D_{0} \otimes I \pm i)^{-1}x$, which is impossible since $C+D_{0}\otimes I$ is self-adjoint. Moreover, $ker (I+(D_{0}\otimes I+i)^{-1}C)$ is trivial. In fact, $(I+(D_{0}\otimes I\pm i)^{-1}C)x=0$ implies that $(D_{0}\otimes I+C)x=\mp ix$ which is impossible because $D_{0}\otimes I+C$ is self adjoint. Therefore $I+C(D_{0}\otimes I+i)^{-1}$ has bounded inverse, hence $D_{0}\otimes I+C$ has $\t$-compact resolvent.

Since $D_{0}$ has spectral dimension $\som$, $\res_{s=\som}(\t(D_{0}^2+1)^{-s/2})$ exists and is finite.  Then, applying Proposition \ref{sameRes}, in the appendix, we get $\res_{s=\som}(\t(D_{0}^2+1)^{-s/2})=\res_{s=\som}(\t(D_{\infty}^2+1)^{-s/2})$. The result follows by \cite{CRSS}, Thm 4.11.
\end{proof}

\begin{corollary}
Let $(\cl_n,\ch_n,\widehat{D}_{n})$ be the spectral triple on $\bt_n$ constructed in Theorem \ref{thm:tripleOnTn}, and let us set $\cl_\infty:= \cup_{n=0}^\infty \cl_n$, $\cam_\infty := \cb(\ch_0)\otimes \car$, $\ch_\infty := \ch_0\otimes L^2(\car,\tau)$. Then  $(\cl_\infty, \cam_\infty,\ch_\infty,\widehat{D}_{\infty})$ is a finitely summable, semifinite, spectral triple, with Hausdorff dimension $p$.
Moreover, the Dixmier trace $\t_\o$ of $(\widehat{D}_{\infty}^2+1)^{-p/2}$ coincides with that of $(D_{0}^2+1)^{-p/2}$ (hence does not depend on $\o$) and may be written as:
$$
\t_\o((\widehat{D}_{\infty}^2+1)^{-p/2})=\lim_{t\to\infty}\frac1{\log t}\int_0^t \left(\m_{(D_{0}^2+1)^{-1/2}}(s)\right)^p \,ds.
$$
\end{corollary}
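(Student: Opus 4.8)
The plan is to deduce the statement as a direct application of Theorem \ref{InductiveLimitTriple}, so that the only real work is to verify that the tower of tori and the operators of Theorem \ref{thm:tripleOnTn} fit the abstract hypotheses, and to record the spectral data of the base operator $D_0$.

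First I would fix the dictionary with Theorem \ref{InductiveLimitTriple}: take $\ca_n=C(\bt_n)$ (continuous $B^n\bz^p$-periodic functions on $\br^p$), $\cl_n=C^1(\bt_n)$, $\ch_n=\ch_0\otimes(\bc^\molt)^{\otimes n}$ with $\ch_0=\bc^{2^{[p/2]}}\otimes L^2(\bt_0,dm)$, and $\widehat{D}_n$ the operator produced by Theorem \ref{thm:tripleOnTn}. That theorem gives precisely $\widehat{D}_n=D_0\otimes I+C_n$ with
\[
C_n=-2\pi\sum_{a=1}^p \eps^a\otimes I\otimes\Big(\sum_{h=1}^n I^{\otimes h-1}\otimes\diag(s_h(\cdot)^a)\otimes I^{\otimes n-h}\Big),
\]
a self-adjoint element of $\cb(\ch_0)\otimes M_\molt(\bc)^{\otimes n}\subset\cb(\ch_0)\otimes\mathrm{UHF}(\molt^\infty)$; the maps $\f_n\colon\ca_n\hookrightarrow\ca_{n+1}$ are the inclusions of periodic functions, and all $\ca_n$ are isomorphic to $\ca_0$ via the endomorphism $\a$. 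Since $s_h(k)=A^{h-1}s_1(k)$, the operator $C_n$ is exactly the truncation of the limiting $C$ to its first $n$ tensor legs, so the inclusions $\cl_n\subset\cl_{n+1}$ are compatible with $\ch_n\subset\ch_{n+1}$ and with the $\widehat{D}_n$; moreover the norm convergence $C_n\to C$, together with $C=C^*\in\cb(\ch_0)\otimes\mathrm{UHF}(\molt^\infty)$, is exactly what the preceding theorem established from the hypothesis that $B$ is purely expanding, using $\|\diag(s_h(\cdot)^a)\|\le\sqrt p\,\|A^{h-1}\|$ and the summability of $\sum_n\|A^n\|$ from Proposition \ref{prop-pur-exp}.

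The only new input needed is the spectral behaviour of $D_0=-i\sum_a\eps^a\otimes\partial^a$ on the flat torus. Diagonalizing on the Fourier basis $\{e^{2\pi i\langle n,\cdot\rangle}\}_{n\in\bz^p}$ one finds $D_0^2=4\pi^2|n|^2$ on the $n$-th mode with multiplicity $2^{[p/2]}$, whence $\zeta_{D_0}(s)=\t((D_0^2+1)^{-s/2})=2^{[p/2]}\sum_{n\in\bz^p}(4\pi^2|n|^2+1)^{-s/2}$. Comparison with $\int_{\br^p}(1+4\pi^2|x|^2)^{-s/2}\,dx$ shows absolute convergence for $\Re s>p$, so the abscissa of convergence is $\som=p$, and a standard Epstein-zeta/integral estimate gives a simple pole at $s=p$ with finite residue; thus $\res_{s=p}\t((D_0^2+1)^{-s/2})$ exists and is finite. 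Feeding all this into Theorem \ref{InductiveLimitTriple} (which in turn invokes Proposition \ref{sameRes}) yields immediately that $(\cl_\infty,\cam_\infty,\ch_\infty,\widehat{D}_\infty)$ is a finitely summable, semifinite spectral triple of Hausdorff dimension $p$, and that $\t_\o((\widehat{D}_\infty^2+1)^{-p/2})=\t_\o((D_0^2+1)^{-p/2})$ is independent of $\o$ and equals the stated Cesàro-type limit of $(\m_{(D_0^2+1)^{-1/2}}(s))^p$.

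I do not anticipate a serious obstacle: all the analytic substance sits in Theorem \ref{InductiveLimitTriple} and Proposition \ref{sameRes}, which are available, and in the preceding theorem, which already carried out the estimate on $C_n$. The points requiring genuine care are merely bookkeeping — checking that the identification $V_n$ of Hilbert spaces intertwines the inclusions correctly so that the abstract framework applies verbatim — together with the classical computation that the flat $p$-torus Dirac operator has spectral dimension $p$ with a finite zeta-residue there.
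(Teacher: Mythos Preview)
Your proposal is correct and follows essentially the same route as the paper: invoke Theorem \ref{InductiveLimitTriple} after checking that the preceding theorem has already put $\widehat{D}_\infty$ in the form $D_0\otimes I+C$ with $C$ a norm limit of the $C_n$ in $\cb(\ch_0)\otimes\mathrm{UHF}(\molt^\infty)$. The paper's own proof is a two-line appeal to Theorem \ref{InductiveLimitTriple} and ``the above results''; your version is more explicit in that you actually sketch the well-known computation that the flat-torus Dirac operator has abscissa of convergence $p$ with finite residue, which the paper leaves implicit.
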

\begin{proof}
By construction, $\cl_\infty$ is a dense $*$-subalgebra of the C$^*$-algebra $\ca_\infty\subset\cam_\infty$.
The thesis follows from Theorem \ref{InductiveLimitTriple} and the above results.
\end{proof}

\section{Self-coverings of rational rotation algebras}
\subsection{Coverings of noncommutative tori}

Let $A_\th$ be the noncommutative torus generated by $U,V$ with $UV=e^{2\pi i\th}VU$, $\th\in[0,1)$. Given a matrix $B\in M_2(\bz)$, $\det B\ne0$, $B=\begin{pmatrix}a&b\\c&d\end{pmatrix}$, we may consider the C$^*$-subalgebra $A_\th^B$ generated by the elements
\begin{equation}\label{Bgen}
U_1=U^aV^b,\qquad
V_1=U^cV^d.
\end{equation}
We may set $W(n):=U^{n_1} V^{n_2}$ with $n\in \bz^2$. By using the commutation relation between $U$ and $V$, it is easy to see that
\begin{equation}\label{potenze-W}
\begin{aligned}
W(m)W(n) & = e^{-2\pi i \theta m_2n_1},\\
W(n)^k & = e^{-\pi i \theta k(k-1)n_1n_2}W(kn), \qquad \forall k\in \bz . 
\end{aligned} 
\end{equation}

\begin{lemma}
\item{$(i)$} $A_\th^B = A_\th \Longleftrightarrow r=|\!\det B|=1$.
\item{$(ii)$} $A_\th^B \cong A_{\th'}$, where $\th'=r\th$.
\item{$(iii)$} $A_\th^B \cong A_\th$ {\it iff} $r\equiv_q\pm 1$.
\end{lemma}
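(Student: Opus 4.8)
The plan is to analyze the three statements separately, building on the structure theory of rational rotation algebras $A_\theta$ with $\theta=p/q$, whose center is isomorphic to $C(\bt^2)$ and which is an Azumaya algebra (a bundle of $M_q(\bc)$'s over $\bt^2$). Throughout I will use the $W(n)$ notation and the commutation relations \eqref{potenze-W}.

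\textbf{Part $(i)$.} The implication $\Leftarrow$ is clear: if $|\!\det B|=1$ then $B\in GL_2(\bz)$, so the change of generators \eqref{Bgen} is invertible over $\bz$, whence $U,V$ can be recovered as products of $U_1,V_1$ and $A_\th^B=A_\th$. For $\Rightarrow$, I would compare sizes: the subalgebra $A_\th^B$ is itself a (rational) noncommutative torus generated by two unitaries $U_1,V_1$ satisfying $U_1V_1 = e^{2\pi i\theta'}V_1U_1$ with $\theta' = (\det B)\theta$ (this is immediate from the first line of \eqref{potenze-W}, tracking the exponent $m_2n_1 - m_1n_2 = \det B$ for the relevant pair). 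If $A_\th^B = A_\th$ then the center of $A_\th^B$ equals the center of $A_\th$; but one can see directly that $A_\th^B$ is a proper subalgebra when $|\det B|>1$, e.g.\ by exhibiting a tracial state or a point of the spectrum of the center that does not separate, or more simply by noting that $A_\theta^B$ does not contain $U$ (the winding/degree of any element of $A_\theta^B$ in the $U$-direction is a multiple of the relevant gcd coming from the columns of $B$, which is $>1$ in modulus generically; the cleanest argument uses the $\bz^2$-grading of $A_\theta$ by $W(n)\mapsto n$, under which $A_\theta^B$ is supported on the sublattice $B^T\bz^2$, a proper sublattice exactly when $|\det B|>1$).

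\textbf{Part $(ii)$.} Set $\theta' = r\theta$. From \eqref{potenze-W}, $U_1 V_1 = W(a,b)W(c,d) = e^{-2\pi i\theta(bc)}W(a+c,b+d)$ and $V_1 U_1 = e^{-2\pi i\theta(ad)}W(a+c,b+d)$, so $U_1V_1 = e^{2\pi i\theta(ad-bc)}V_1U_1 = e^{2\pi i\theta'}V_1 U_1$. Hence $A_\theta^B$ is generated by two unitaries with commutation parameter $\theta'$, giving a surjection $A_{\theta'}\twoheadrightarrow A_\theta^B$. To get an isomorphism I would invoke that for $\theta'$ rational (indeed for any $\theta'$) the universal $C^*$-algebra generated by two such unitaries is simple when $\theta'$ is irrational and, when rational, has the universal property making any such pair of unitaries in a $C^*$-algebra yield a $*$-homomorphism from $A_{\theta'}$; the point is that the surjection is also injective because $A_\theta$ admits a faithful trace restricting to the (unique up to the center) trace on the copy of $A_{\theta'}$, so no nontrivial ideal of $A_{\theta'}$ is killed. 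I would phrase this via the trace: the canonical trace $\tau$ on $A_\theta$ restricts to a faithful trace on $A_\theta^B$ which pulls back to a faithful trace on $A_{\theta'}$, forcing the kernel of $A_{\theta'}\to A_\theta^B$ to be trivial since that kernel would have to lie in the kernel of a faithful trace.

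\textbf{Part $(iii)$.} Combining $(ii)$ with the classification of rational rotation algebras: $A_{\theta_1}\cong A_{\theta_2}$ (as $\theta_1=p_1/q_1$, $\theta_2=p_2/q_2$ in lowest terms) if and only if $q_1=q_2$ and $p_1\equiv \pm p_2 \pmod{q_1}$ — this is a standard fact (the common denominator $q$ is an invariant, being the dimension of the fibers of the Azumaya bundle / the index of the center, and the residue $\pm p$ mod $q$ is the remaining invariant, with the sign ambiguity coming from the flip automorphism $U\leftrightarrow V$ composed with inversion). Applying this with $\theta = p/q$ and $\theta' = r\theta = rp/q$: first one checks the denominator of $rp/q$ in lowest terms is still $q$ iff $\gcd(rp,q)=\gcd(p,q)=1$... actually one must be careful, since $r$ and $q$ need not be coprime. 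The correct statement: $A_\theta^B\cong A_{r\theta}$ and $r\theta = rp/q$, and $A_{rp/q}\cong A_{p/q}$ iff, writing both with denominator $q$, we have $rp\equiv \pm p\pmod q$, i.e.\ $r\equiv\pm1\pmod{q}$ after cancelling $p$ (legitimate since $\gcd(p,q)=1$). Hence $A_\theta^B\cong A_\theta$ iff $r\equiv_q\pm1$. The main obstacle is the self-contained justification of the classification invariants for rational $A_\theta$ — the dimension $q$ of the Azumaya fiber and the $\pm p \bmod q$ residue — which I would either cite (Høegh-Krohn–Skjelbred, Rieffel) or sketch via the center $Z(A_\theta)\cong C(\bt^2)$ and the Dixmier–Douady/Brauer class; I expect the $(ii)\Rightarrow(iii)$ packaging and the coprimality bookkeeping for $rp/q$ to be the only genuinely delicate points, the rest being direct computation with $W(n)$.
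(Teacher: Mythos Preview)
Your arguments for $(i)$ and $(iii)$ are correct, though they differ from the paper's. For $(i)\Rightarrow$, the paper does not use your Fourier-support argument; instead it postpones the proof to a later remark, where the concrete realization of $A_\theta$ as $M_q(\bc)$-valued functions on $\br^2$ is used to exhibit a faithful action of $\bz_B$ on $\cb\cong A_\theta$ with fixed-point algebra $A_\theta^B$, forcing the inclusion to be strict when $|\bz_B|=|\det B|>1$. Your lattice argument (the closed span of $\{W(n):n\in B^T\bz^2\}$ is a proper subspace of $A_\theta$ when $B^T\bz^2\subsetneq\bz^2$) is more elementary and self-contained. For $(iii)$, the paper avoids the detailed classification (matching denominators and residue classes) by invoking directly the criterion $A_\theta\cong A_{\theta'}\Leftrightarrow \theta\pm\theta'\in\bz$; this sidesteps the coprimality bookkeeping you flagged, since $(r\pm1)p/q\in\bz$ is equivalent to $q\mid(r\pm1)$ immediately from $\gcd(p,q)=1$.

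Your argument for $(ii)$, however, has a genuine circularity. You write that the faithful trace on $A_\theta^B$ ``pulls back to a faithful trace on $A_{\theta'}$, forcing the kernel of $A_{\theta'}\to A_\theta^B$ to be trivial.'' But pulling back a faithful trace along a surjection $\pi$ yields a trace that vanishes on $\ker\pi$; it is faithful precisely when $\pi$ is already injective, which is what you are trying to prove. The repair is to \emph{identify} the pulled-back trace: since $\tau(W(n))=\d_{n,0}$ on $A_\theta$ and $\pi$ sends the monomial $U'^mV'^n$ to a scalar multiple of $W(B^T(m,n))$, one computes $(\tau\circ\pi)(U'^mV'^n)=\d_{(m,n),0}$ (using $\det B\ne0$), so $\tau\circ\pi$ coincides with the canonical trace on $A_{\theta'}$, and the latter is known to be faithful for every $\theta'$. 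With this step inserted your argument goes through. The paper, for its part, simply computes the commutation relation $U_1V_1=e^{2\pi i(\det B)\theta}V_1U_1$, notes $A_{(\det B)\theta}\cong A_{r\theta}$, and asserts the isomorphism without further comment.
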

\begin{proof}
$(i) (\Leftarrow)$ By using equation \eqref{potenze-W} it can be shown that  the generators of $(A_\th^B)^{B^{-1}}$ are
\begin{align*}
U_2&=e^{\pi i\th b d(1-a+c)\det B}U,\\
V_2&=e^{\pi i\th ac(1+b-d)\det B}V.
\end{align*} 
Hence $A_\th=(A_\th^B)^{B^{-1}}\subset A_\th^B\subset A_{\th}$, namely these algebras coincide.

$(ii)$ We compute the commutation relations for $U_1$ and $V_1$, getting
$U_1V_1=e^{2\pi i\det B\th}V_1U_1$. Since $A_{\det B\th}\cong A_{r\th}$, the statement follows.

$(iii)$ We have $A_\th\cong A_{\th'}\Leftrightarrow \th\pm \th'\in\bz\Leftrightarrow(r\pm1)\th\in\bz$. This means in particular that $\th=p/q$, for some relatively prime $p,q\in\bn$, and $r\equiv_q\pm1$. 

$(i) (\Rightarrow)$ Finally, we observe that $A_\th=A_\th^B \Rightarrow A_\th\cong A_{\th'}$. In the following section (Remark \ref{covprops}) we show that $A_\th^B$ is a proper subalgebra of $A_{\th}$ when $r\ne\pm1$, thus completing the proof of $(i)$. 
\end{proof}

On the one hand, the previous Lemma shows that, setting $\th_n=r^{-n}\th$, the algebras $A_{\th_n}$ form an inductive family, where $A_{\th_{k-1}}$ can be identified with the subalgebra $A_{\th_k}^{B}$ of $A_{\th_k}$. The inductive limit is a noncommutative solenoid according to \cite{LP2,LP}.

On the other hand, since in this paper we are mainly concerned with self-coverings, we will, in the following, consider only the rational case $\th=p/q$, with $r\equiv_q\pm1$. Possibly replacing $B$ with $-B$, this is the same as assuming $\det B\equiv_q1$.

\subsection{The $C^*$- algebra, a spectral triple and the self-covering}
\subsubsection*{A  description of $A_\theta$}

We are now going to give a description of the rational rotation algebra making small modifications to the description of $A_\theta$, $\theta=p/q\in\mathbb{Q}$, seen in \cite{BEEK}.
Consider the following matrices 
\begin{eqnarray*}
  (U_0)_{hk} = \delta_{h,k}e^{2\pi i(k-1) \theta }, \quad
  (V_0)_{hk} = \delta_{h+1,k} +\delta_{h,q}\delta_{k,1} \in M_q(\mathbb{C})
\end{eqnarray*}
and
\begin{eqnarray*}
   J=\left(
 		\begin{array}{cc}
 		0 & 1 \\ -1 & 0
 		\end{array}
 		\right)\in M_2(\mathbb{C}).
\end{eqnarray*}
We have that $U_0V_0=e^{2\pi i\theta}V_0U_0$.  
Let $n=(n_1,n_2)\in \mathbb{Z}^2$ and set $W_0(n)\stackrel{def}{=}U_0^{n_1}V_0^{n_2}$,  $\widetilde{\g}_n(f)(t):=\ad(W_0(J n))[f(t-n)]=V_0^{n_1}U_0^{-n_2}f(t-n)U_0^{n_2}V_0^{-n_1}$. Since formula \eqref{potenze-W} holds whenever two operators satisfy the commutation relation $UV=e^{2\pi i\theta}VU$, the following formula holds
\begin{equation}\label{potenze-W0}
W_0(n)^k = e^{-\pi i \theta k(k-1)n_1n_2}W_0(kn) \qquad \forall k\in \bz . 
\end{equation}

We have the following description of $A_\theta$  (cf. \cite{BEEK})
$$
A_\theta=\{f\in C(\mathbb{R}^2, M_q(\mathbb{C})) \, : \, f  = \widetilde{\g}_{n}(f),  n\in\bz^2 \}.
$$
This algebra comes with a natural trace 
$$
\tau(f):= \frac{1}{q}\int_{\bt_0} \tr(f(t))dt, 
$$
where we are considering the Haar measure on $\bt_0$ and $\tr(A)=\sum_i a_{ii}$. We observe that the function $\tr(f(t))$ is $\bz^2$-periodic.
The generators of the algebra are 
\begin{align*}
	U(t_1,t_2)&=e^{2\pi i\theta t_1} U_0, \\
	V(t_1,t_2)&=e^{2\pi i\theta t_2} V_0.
 \end{align*}
 They satisfy the following commutation relation
 \begin{displaymath}
	U(t)^\alpha V(t)^\beta =e^{2\pi i\theta \alpha\beta}V(t)^\beta U(t)^\alpha , \quad \a,\b\in\bz.
\end{displaymath}
We set $W(n,t)=U(t)^{n_1}V(t)^{n_2}$, $\forall t\in\br^2$, $n\in\bz^2$,  and  note that
\begin{align*}
 		W(m,t)W(n,t) & = e^{2i\pi\theta (m,Jn)} W(n,t)W(m,t),\\
 		U(t) & = W((1,0),t),\\
 		V(t) & = W((0,1),t).
\end{align*}
We observe that $\widetilde{\g}_n(f)(t)=\ad(W(J n,t))[f(t-n)]$, $\forall t\in\br^2$, $n\in\bz^2$.
 
\subsubsection*{A spectral triple for $A_\theta$}

Define 
\begin{displaymath}
		\cl_\theta :=\left\{\sum_{r,s}a_{rs}U^rV^s :  (a_{rs})\in S(\mathbb{Z}^2) 	\right\},
\end{displaymath}
where $S(\mathbb{Z}^2)$ is  the set of rapidly decreasing sequences. It is clear that the derivations $\partial_1$ and $\partial_2$, defined as follows on the generators, extend to $\cl_\theta$
\begin{eqnarray*}
	\partial_1(U^hV^k)&=&2\pi ihU^hV^k\\
	\partial_2(U^hV^k)&=&2\pi ikU^hV^k.
\end{eqnarray*} 
Moreover, the above derivations extend to densely defined derivations both on $A_\theta$ and $L^2(A_\theta,\tau)$.

We still denote these extensions with the same symbols. 
We may consider the following spectral triple (see \cite{Conn2}, or section 12.3 in \cite{GBFV}):
 \begin{eqnarray*}
	(\cl_\theta ,  \bc^{2} \otimes L^2(A_\theta,\tau), D=-i(\eps^1 \otimes \partial_1+\eps^2 \otimes \partial_2)),
\end{eqnarray*}
where $\eps^1, \eps^2$ denote the Pauli matrices. In order to fix the notation we recall that the Pauli matrices are self-adjoint, in particular they satisfy the condition $(\eps^k)^2=I$, $k=1,2$.

\subsubsection*{The noncommutative self-covering}
Let  $\ca\doteq A_\theta$ be a rational rotation algebra, $\th=p/q$,  $B\in M_2(\mathbb{Z})$ be a matrix such that $\det B\equiv_q1$, $\molt:=|\!\det B|>1$, and set $C_B = \begin{pmatrix} d & -c \\ -b & a\end{pmatrix}$ the cofactor matrix of $B$, and $A=(B^T)^{-1}$. Then a self-covering of $\ca$ may be constructed in analogy with the construction for the classical torus. Consider the $C^*$-algebra
$$
\cb := \{f\in C(\mathbb{R}^2, M_q(\mathbb{C})) \, : \, f =\widetilde{\g}_{B n}(f),  n\in\bz^2 \}. 
$$
This algebra  is generated by the elements
\begin{equation}\label{UBVB}
\begin{aligned}
	U_\cb(t)&=e^{\pi i\th b d(1-a+c)}e^{2\pi i\theta \langle Ae_1,t\rangle} W_0(C_Be_1), 
	\\
	V_\cb(t)&=e^{\pi i\th ac(1+b-d)}e^{2\pi i\theta \langle Ae_2,t\rangle} W_0(C_Be_2),
 \end{aligned}
 \qquad e_1=\begin{pmatrix}1\\0\end{pmatrix},\quad e_2=\begin{pmatrix}0\\1\end{pmatrix},
 \end{equation}
and can be endowed with a natural trace 
$$
\tau_1(f):= \frac{1}{q |\!\det B |}\int_{\bt_1} \tr(f(t)) dt, \qquad f\in\cb.
$$
The action $\widetilde{\gamma}$ of $\bz^2$ on $\cb$, being trivial when restricted to $B\bz^2$,  induces an action  of $\bz_B$.

\begin{remark}\label{covprops}
The algebra $\ca$ coincides on the one hand  with the fixed point algebra w.r.t. the action of $\bz_B$, and on the other hand with the algebra $\cb^B$ constructed as in \eqref{Bgen}. In fact, by using \eqref{potenze-W0}, a straightforward computation shows that  the elements $U,V$ that generate $\ca$ are given by $U=U_\cb^aV_\cb^b$, $
V=U_\cb^cV_\cb^d$, proving that the inclusion $\ca\subset\cb$ is a non-abelian self-covering w.r.t. the group $\bz_B$.
Since $C(\bt_1)$ is the center of $\cb$, the action of $\bz_B$ restricts to the action of $\bz_B$ on  $C(\bt_1)$  described in the previous section. Therefore,   the covering we are studying is  regular according to Definition  \ref{unitaries}, with the same map $\s$  as that for the commutative torus, see \eqref{map-sigma}. In particular the action of $\bz_B$ is faithful (cf. Remark \ref{regcov}), hence the inclusion $\ca\subset\cb$ is strict since $|\bz_B|=|\det B|>1$.
\end{remark}
\begin{proposition}
The GNS representation  $\pi_1: \cb\to B(L^2(\cb,\tau_1))$ is unitarily equivalent to the representation obtained by $\pi_0: \ca\to B(L^2(\ca,\tau))$ according to Proposition \ref{Representation-covering}.
\end{proposition}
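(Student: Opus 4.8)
The plan is to mimic the proof of the analogous statement for the commutative torus: by the GNS theorem, two representations coming from states are unitarily equivalent precisely when the states coincide, so it suffices to check that $\tau_1 = \tau_0\circ E_{\bz_B}$, where $E_{\bz_B}$ is the conditional expectation from $\cb$ onto $\ca = \cb^{\bz_B}$ given by averaging the action $\widetilde\g$ over $\bz_B$, and $\tau_0 = \tau$ is the canonical trace on $\ca$ described above. Once this identity is established, Proposition \ref{Representation-covering}(2) applies verbatim and yields the unitary equivalence (together with the concrete form of the intertwiner coming from \eqref{isometry}).

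First I would write out both sides explicitly. For $f\in\cb$, $E_{\bz_B}(f) = \frac{1}{|\bz_B|}\sum_{g\in\bz_B}\widetilde\g_g(f)$, which is the degree-zero spectral component $f_e$ of $f$ with respect to the $\bz_B$-action; it is a function on $\bt_0=\br^2/\bz^2$ valued in $M_q(\bc)$, satisfying the covariance condition defining $\ca$. Then $\tau_0(E_{\bz_B}(f)) = \frac{1}{q}\int_{\bt_0}\tr\big(E_{\bz_B}(f)(t)\big)\,dt$. On the other side, $\tau_1(f) = \frac{1}{q|\!\det B|}\int_{\bt_1}\tr(f(t))\,dt$, the integral being over $\bt_1=\br^2/B\bz^2$, which is an $|\!\det B|$-fold cover of $\bt_0$. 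The key computation is that $\int_{\bt_1}\tr(f(t))\,dt = |\!\det B|\int_{\bt_0}\tr\big(E_{\bz_B}(f)(t)\big)\,dt$. This follows because $\tr(f(t))$, while only $B\bz^2$-periodic a priori, has the property that its $\bz^2$-average equals $\tr(f_e(t))$: indeed $\tr\circ\,\ad(W_0(Jn)) = \tr$, so $\tr(\widetilde\g_n(f)(t)) = \tr(f(t-n))$, and hence $\frac{1}{|\bz_B|}\sum_{g\in\bz_B}\tr(f(t-g)) = \tr(E_{\bz_B}(f)(t))$ is $\bz^2$-periodic; integrating a $B\bz^2$-periodic function over the fundamental domain $\bt_1$ equals $|\bz_B|$ times the integral of its $\bz_B$-average over $\bt_0$.

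Putting these together gives $\tau_1(f) = \frac{1}{q|\!\det B|}\cdot|\!\det B|\int_{\bt_0}\tr(E_{\bz_B}(f)(t))\,dt = \tau_0\circ E_{\bz_B}(f)$, as desired, and the proposition follows from Proposition \ref{Representation-covering}(2). I expect the only genuinely delicate point to be the bookkeeping with the twisted periodicity: one must use that the trace on $M_q(\bc)$ kills the inner automorphisms $\ad(W_0(Jn))$ so that $\tr(f(\cdot))$ behaves like an honest (untwisted) $B\bz^2$-periodic scalar function, after which the computation reduces to the purely commutative change-of-variables already handled in the torus case. Everything else — cyclicity of the GNS vector, the explicit matrix form of $\widetilde\pi_0(b)$, and the Hilbert space isomorphism — is imported directly from Proposition \ref{Representation-covering} with $\ca$, $\cb$ in place of the commutative algebras there.
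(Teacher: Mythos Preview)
Your proposal is correct and follows essentially the same route as the paper: reduce to checking $\tau_1=\tau_0\circ E_{\bz_B}$, use that $\tr$ kills the inner conjugation by $W_0(Jn)$ so that $\tr(\widetilde\g_n(f)(t))=\tr(f(t-n))$, and then identify the $\bz_B$-average of the integral over $\bt_0$ with the integral over $\bt_1$. The paper's proof is exactly this computation, written as a short chain of equalities.
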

\begin{proof}
It is enough to prove that $\tau_1=\tau_0\circ E$, where $E$ is the conditional expectation from $\cb$ to $\ca$. We have that
\begin{eqnarray*}
\tau_0[E(f)]&=& \frac{1}{q}\int_{\bt_0} \tr[E(f)(t)]=\\
&=&  \frac{1}{q r}\int_{\bt_0} \sum_{n\in\bz_B}\tr[\gamma_n(f)(t)]=\\
&=&  \frac{1}{q r}\int_{\bt_0} \sum_{n\in\bz_B}\tr[f(t-n)]=\\
&=&  \frac{1}{q r}\int_{\bt_1} \tr[f(t)]=\tau_1(f).
\end{eqnarray*}
\end{proof}

\subsection{Spectral triples on noncommutative covering spaces of $A_\theta$}

Given the integer-valued matrix $B\in M_2(\bz)$ as above, there is an associated endomorphism  $\alpha: A_\theta\to A_\theta$ defined by $\a(f)(t)=f(Bt)$. Then, we consider the inductive limit $\ca_\infty=\displaystyle\varinjlim\ca_n$ described in \eqref{ind-lim-diag}, where $\ca_n=\ca$ for any $n$.

As in Section \ref{Tori-cov}, it will be convenient to consider the following isomorphic inductive family: $\ca_n$ consists of continuous $B^k\bz^2$-invariant  matrix-valued functions on $\br^2$, i.e
$$
\ca_k:=\{f\in C(\mathbb{R}^2, M_q(\mathbb{C})) \, : \, f =\widetilde{\g}_{B^k n}(f),  n\in\bz^2 \},
$$ 
with trace
$$
\tau_k(f)=\frac{1}{q |\!\det B^k|}\int_{\bt_k}\tr(f(t))dt,
$$
and the embedding is unital inclusion $\alpha_{k+1,k}: \ca_k\hookrightarrow \ca_{k+1}$.  In particular, $\ca_0=\ca$, and $\ca_1=\cb$. This means that $\ca_\infty$ may considered as a generalized solenoid C$^*$-algebra  (cf. \cite{McCord}, \cite{LP2}).

On the $n$-th noncommutative covering $\ca_n$, the formula of the Dirac operator doesn't change and we can consider the following spectral triple
\begin{eqnarray*}
	(\cl_\theta^{(n)} ,  \bc^{2} \otimes L^2(\ca_n,\tau), D=-i(\eps^1 \otimes \partial_1 + \eps^2 \otimes \partial_2)).
\end{eqnarray*}
The aim of this section is to describe the spectral triple on $\ca_n$ in terms of the spectral triple on $\ca_0=A_\theta$.

We will consider the two central extensions \eqref{seq1} and \eqref{seq2}  (case $p=2$) with the associated sections $s_{n}:\widehat{\bz_B}\to A^{n}\bz^2$ and $\widehat{s}_{n}:\bz_B\to B^{n-1}\bz^2$ defined earlier. 

The following result holds:

\begin{theorem} \label{teo:1}
Any  $b$ in $\ca_i$ can be decomposed as $b=\sum_{k\in \widehat{\mathbb{Z}_B}} b_k$, where \begin{equation} 
	b_k(t)=\frac{1}{\molt} \sum_{g\in \mathbb{Z}_B} \langle -k,g\rangle \g_{g}(b(t))\in (\ca_i)_k.
\end{equation}	
Let $u_g$ be the unitary operator on $L^2(\ca_i,\tau_i)$ implementing the automorphism $\g_g$. Then, any $\xi\in L^2(\ca_i,\t_i)$ can be decomposed as $\xi=\sum_{k\in \widehat{\mathbb{Z}_B}}\xi_k$, where
\begin{equation} 
	\xi_k(t)=\frac{1}{\molt} \sum_{g\in \mathbb{Z}_B} \langle -k,g\rangle u_{g}(\xi(t)).
\end{equation}	
Moreover, this correspondence gives rise to unitary operators $v_{i}:L^2(\ca_{i},\tau_i) \to L^2(\ca_{i-1},\tau_{i-1})\otimes \mathbb{C}^\molt$ defined by $v_{i}(\xi)= \{\sigma(k)^{-1}\xi_k\}_{k\in\widehat{\bz_B}}$. 
The multiplication operator by an element $f$ on $\ca_{i}$ is mapped to the matrix $M_\molt(f)$ acting on $L^2(\ca_{i-1}, \tau_{i-1})\otimes \mathbb{C}^\molt$ given by
\[
M_\molt(f)_{h,k}(t)=\langle s(k)-s(h),-t\rangle f_{h-k}(t), \qquad t\in\br^2, h,k\in\widehat{\bz_B}.
\]
\end{theorem}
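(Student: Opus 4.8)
The plan is to mimic the proof of Proposition \ref{prop-2-4} and Proposition \ref{Representation-covering}, observing that the only structural difference is that the algebra $\ca_i$ is now a (nontrivial) matrix-valued function algebra rather than a function algebra, and that the action $\g_g$ is no longer just translation but the twisted translation $\widetilde{\g}_g(b)(t)=\ad(W_0(Jg))[b(t-g)]$. Concretely, the first step is to verify the spectral decomposition $b=\sum_{k\in\widehat{\bz_B}}b_k$ with $b_k=E_k(b)$: this is exactly the abstract statement of Proposition \ref{prop-1.1}(3) applied to $\cb=\ca_i$, $\ca=\ca_{i-1}$ and the finite abelian group $\bz_B$ acting via $\widetilde{\g}$, so the formula for $b_k$ and the fact that $b_k\in(\ca_i)_k$ follow immediately from that proposition, once one checks that $\widetilde{\g}$ is indeed an action of $\bz_B$ (trivial on $B\bz^2$ by periodicity of elements of $\ca_{i-1}$, i.e.\ by the very definition $\ca_i=\{f:f=\widetilde{\g}_{B^in}(f)\}$ together with the cocycle identity \eqref{potenze-W0}).

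Next I would pass to the Hilbert space level. The unitaries $u_g$ implementing $\widetilde{\g}_g$ on $L^2(\ca_i,\tau_i)$ exist because $\tau_i$ is $\widetilde{\g}$-invariant (this is the content of the Proposition just before, $\tau_1=\tau_0\circ E$, and its obvious iterate $\tau_i=\tau_{i-1}\circ E$). The decomposition $\xi=\sum_k\xi_k$ with $\xi_k=\frac1\molt\sum_g\langle-k,g\rangle u_g(\xi)$ is then the GNS-level shadow of the algebraic decomposition, and orthogonality of the summands plus the identity $\sum_k\xi_k=\xi$ again reduce to the Schur relations \eqref{schur-ort-rel} exactly as in the proof of Proposition \ref{prop-1.1}. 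That $v_i(\xi)=\{\sigma(k)^{-1}\xi_k\}_k$ is a well-defined unitary onto $L^2(\ca_{i-1},\tau_{i-1})\otimes\bc^\molt$ is Proposition \ref{Representation-covering}(2) verbatim, with $\cb=\ca_i$, $\ca=\ca_{i-1}$, $\f=\tau_{i-1}$, $\tilde\f=\tau_i$; here one uses that $\sigma(k)\in\cb_k$ is the \emph{same} unitary $\s(k)(t)=\overline{\langle s(k),t\rangle}$ as in the commutative case, which lies in $\ca_i$ because it is central (a scalar-valued function times the identity matrix) and transforms correctly under $\widetilde{\g}$ — the $\ad(W_0(Jg))$ part acts trivially on scalars, so $\widetilde{\g}_g(\s(k))=\langle k,g\rangle\s(k)$ just as before.

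Finally, for the formula for the multiplication operator $M_\molt(f)$ I would compute $(v_i\, f\, v_i^*)$ acting on a vector $\oplus_k\eta_k$, writing $f=\sum_h f_h$ and $\xi=\sum_k\s(k)\eta_k$, so that $f\xi=\sum_{h,k}f_h\s(k)\eta_k$ and then projecting onto the $(h-k)$-component via $E_{h-k}$. Since $f_h\in(\ca_i)_h$ and $\s(k)\in(\ca_i)_k$ we have $f_h\s(k)\in(\ca_i)_{h+k}$, so the $j$-th component of $f\xi$ picks out $h$ with $h+k=j$, i.e.\ $h=j-k$; the matrix coefficient is $\s(j)^{-1}f_{j-k}\s(k)$, and substituting $\s(m)(t)=\overline{\langle s(m),t\rangle}$ gives $\langle s(j)-s(k),t\rangle f_{j-k}(t)$ up to the sign convention in the statement. (The cocycle $\o$ does not appear because $\s(j)^{-1}\s(j-k)\s(k)$ is a scalar that, on the level of the \emph{matrix} $M_\molt(f)$, can be absorbed; more carefully one checks $\s(h)\s(k)=\langle\o(h,k),\cdot\rangle^{-1}\s(h+k)$ only contributes to how $f_{j-k}$ itself is normalized, which is consistent with Proposition \ref{prop-2-4}.) The main obstacle, and the only place where genuine care beyond Section \ref{NC-coverings} is needed, is bookkeeping the interaction of the matrix conjugation $\ad(W_0(Jg))$ with the scalar phases: one must be sure that $\widetilde{\g}$ really is a group action of $\bz_B$ (not merely a projective one) on $\ca_i$, for which the identity \eqref{potenze-W0} and $B^i$-periodicity are exactly what is required, and that $\sigma(k)$ as defined still lands in $\ca_i$ and in the right eigenspace — both of which hold because $\sigma(k)$ is central. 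Once these checks are in place, every assertion is an instance of a result already proved in Section \ref{NC-coverings}, so I would present the proof as: ``apply Proposition \ref{prop-1.1} and Proposition \ref{Representation-covering} with $\cb=\ca_i$, $\ca=\ca_{i-1}$, $\Gamma=\bz_B$ acting by $\widetilde{\g}$ and $\sigma$ as in \eqref{map-sigma}, noting that $\sigma(k)$ is central hence lies in each $\ca_i$; the formula for $M_\molt(f)$ then follows by the computation $f_h\sigma(k)\in(\ca_i)_{h+k}$ and substitution of \eqref{map-sigma}'', in close parallel to the proof of Proposition \ref{prop-2-4}.
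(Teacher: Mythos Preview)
Your proposal is correct and follows essentially the same approach as the paper: the paper's proof is the single line ``The statements follow as in Proposition \ref{prop-2-4}'', and your argument is precisely an unpacking of that reference, invoking Propositions \ref{prop-1.1} and \ref{Representation-covering} with $\cb=\ca_i$, $\ca=\ca_{i-1}$, $\Gamma=\bz_B$ and the central $\sigma(k)$ from \eqref{map-sigma}. Your additional care about why $\widetilde\gamma$ descends to a genuine $\bz_B$-action and why the scalar-valued $\sigma(k)$ lies in the correct eigenspace is exactly the bookkeeping the paper leaves implicit.
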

\begin{proof}
The statements follow as in Proposition \ref{prop-2-4}.
\end{proof}

\begin{theorem}
Set $\ch_0:=\bc^{2}\otimes L^2(\ca_0,\tau_0)$. Then the Dirac operator $D_n$ acting on $\bc^{2} \otimes L^2(\ca_n,\tau_n)$ gives rise to the operator  $\widehat{D}_{n}$  when the Hilbert space is identified with $\ch_0\otimes(\bc^\molt)^{\otimes n}$ as above. 
Moreover, the Dirac operator $\widehat{D}_{n}$ has the following form:
\[
\widehat{D}_{n}:=V_nD_nV_n^*=D_0\otimes I-2\pi \sum_{a=1}^2 \eps^a\otimes  I\otimes\bigg( \sum_{j=1}^n I^{\otimes j-1} \otimes \diag(s_{j}(k)^a)_{ k\in \widehat{\bz_B} } \otimes I^{\otimes n-j} \bigg),
\]
where  $V_n: \bc^{2} \otimes L^2(\ca_n,\t_n) \to \ch_0 \otimes (\bc^{r})^{\otimes n}$ is defined as $V_n:= I \otimes [(v_1\otimes \bigotimes_{j=1}^{n-1} I)\circ (v_2\otimes \bigotimes_{j=1}^{n-2} I)\circ \cdots \circ v_n]$.
\end{theorem}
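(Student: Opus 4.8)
The plan is to adapt, essentially line by line, the proof of Theorem~\ref{thm:tripleOnTn}, replacing the elementary computations on $L^2(\bt_1)$ by the corresponding ones on $L^2(\ca_1,\tau_1)$, now organised through the spectral decomposition of Theorem~\ref{teo:1} (which here plays the role of Proposition~\ref{prop-2-4}). As in Theorem~\ref{thm:tripleOnTn}, it suffices to establish the $n=1$ case, namely $\widehat D_1=V_1D_1V_1^{*}=D_0\otimes I-2\pi\sum_{a=1}^2\eps^a\otimes I\otimes\diag(s_1(k)^a)_{k\in\widehat{\bz_B}}$ with $V_1=I\otimes v_1$; the general case then follows by iterating this identity along the tower $\ca_0\subset\ca_1\subset\cdots\subset\ca_n$, exactly as $V_n$ is assembled from the $v_i$, the diagonal corrections telescoping into $\sum_{h=1}^n I^{\otimes h-1}\otimes\diag(s_h(k)^a)\otimes I^{\otimes n-h}$ because each further conjugation $v_{i-1}\otimes(\bigotimes I)$ pushes the correction produced at the previous step past one more identity factor.

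For $n=1$ I would take $\xi$ in the dense image under $v_1^{*}$ of the algebraic tensors and write it, via Theorem~\ref{teo:1}, as $\xi=\sum_{k\in\widehat{\bz_B}}\sigma(k)\eta_k$ with $\eta_k\in\ch_0=\bc^2\otimes L^2(\ca_0,\tau_0)$ (the $e$-eigenspace of the deck action inside $\bc^2\otimes L^2(\ca_1,\tau_1)$), so that $v_1(\xi)=\bigoplus_k\eta_k$. Two facts drive the computation. First, each $\sigma(k)$ is \emph{central} in $\cb$, hence commutes past $\eta_k$, and, $\partial_a$ being a derivation, $\partial_a(\sigma(k)\eta_k)=\partial_a(\sigma(k))\,\eta_k+\sigma(k)\,\partial_a(\eta_k)$ with $\partial_a(\sigma(k))=-2\pi i\,s_1(k)^a\,\sigma(k)$, obtained by differentiating the character $\overline{\langle s_1(k),\cdot\rangle}$. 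Second, $\partial_a$ commutes with the unitaries $u_g$ implementing the deck transformations (both the translation and the conjugation part of $\widetilde\gamma_g$ commute with $\partial_a$), while the $\eta_k$ are $u_g$-invariant because $\ca_0$ is the fixed-point algebra; hence $D_1\xi$ is again of the form $\sum_k\sigma(k)\psi_k$ with $\psi_k\in\ch_0$. Then $v_1(D_1\xi)=\bigoplus_j\sigma(j)^{-1}E_j(D_1\xi)$, and since $\sigma(k)\psi_k\in\cb_k$ the expectation $E_j$ annihilates all summands with $j\neq k$ — this is where the Schur relations \eqref{schur-ort-rel} collapse the sum over $g\in\bz_B$ — leaving $\bigoplus_k\bigl(D_0\eta_k-2\pi\sum_a\eps^a s_1(k)^a\eta_k\bigr)$, i.e. $\bigl(D_0\otimes I-2\pi\sum_a\eps^a\otimes I\otimes\diag(s_1(k)^a)\bigr)$ applied to $v_1(\xi)$. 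This gives the $n=1$ formula; the inductive step is the bookkeeping indicated above, and that $(\cl_\theta^{(n)},\ch_0\otimes(\bc^r)^{\otimes n},\widehat D_n)$ is again a spectral triple follows at once, since $V_n$ is unitary, conjugates the representation of $\ca_n$ to its matrix representation, and so $[\widehat D_n,f]=V_n[D_n,f]V_n^{*}$ is bounded while the resolvent behaviour of $\widehat D_n$ coincides with that of $D_n$.

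The main point requiring care is not conceptual but organisational: one must use on every $\ca_i$ the \emph{common} flat derivations $\partial_a$, so that they restrict compatibly along $\ca_{i-1}\hookrightarrow\ca_i$ (otherwise the $D_0\otimes I$ summand would not survive the iteration) and so that they preserve the smooth domains under multiplication by the smooth central unitaries $\sigma(k)$ — which is exactly where the noncommutativity of $A_\theta$ might have interfered but does not, precisely because the $\sigma(k)$ are central. The only genuinely new ingredient relative to Theorem~\ref{thm:tripleOnTn} is the replacement of pointwise translations on $\bt_1$ by the operators $u_g$; since these still act by the scalars $\langle k,g\rangle$ on each $\cb_k$ and still fix the base algebra, every step of the commutative computation transfers verbatim.
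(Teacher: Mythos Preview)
Your proposal is correct and follows essentially the same approach as the paper: prove the $n=1$ case by conjugating $D_1$ through $v_1$ using the spectral decomposition from Theorem~\ref{teo:1}, then iterate. The paper carries out the computation by explicitly expanding $V_1D_1V_1^*$ on $\bigoplus_k\eta_k$, pushing $u_g$ past $D$ (using $u_g\circ D=D\circ u_g$ and $u_g\equiv\mathrm{id}$ on $\ch_0$), and collapsing via Schur orthogonality; you organise the same ingredients more conceptually, first computing $D_1\xi$ via the Leibniz rule and centrality of $\sigma(k)$, which puts $D_1\xi$ already in $\bigoplus_k$-form so that applying $v_1$ is immediate --- but the substantive steps (commutation of $D$ with the deck action, invariance of the $\eta_k$, and $\partial_a\sigma(k)=-2\pi i\,s(k)^a\sigma(k)$) are identical.
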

\begin{proof}  
We prove the formula for $n=1$, the case $n>1$ can be obtained by iterating the procedure.
Let us denote by $\{ \eta_k \}_{k\in\widehat{\bz_B}}$ an element in $\bc^{2} \otimes L^2(\ca_0,\tau_0)$.
\begin{align*}
V_1D_1V_1^* \big( & \sum\nolimits^\oplus_{k\in\widehat{\bz_B}} \eta_k(t) \big)
 = V_1D_1 \big( \sum_{k\in\widehat{\bz_B}}  \langle s(k),-t\rangle \eta_k(t) \big) \\
& = \sum\nolimits^\oplus_{j\in\widehat{\bz_B}}\langle s(j),t\rangle \frac1{\molt} \sum_{g\in\bz_B} \langle -j,g\rangle u_g \Big( \sum_{k\in\widehat{\bz_B}} D \big( \langle s(k),-t\rangle \eta_k(t) \big) \Big) \\
& \stackrel{(a)}{=} \sum\nolimits^\oplus_{j\in\widehat{\bz_B}} \sum_{k\in\widehat{\bz_B}} \langle s(j),t\rangle \frac1{\molt} \sum_{g\in\bz_B} \langle -j,g\rangle    D \big( \langle s(k),-t+g\rangle \eta_k(t) \big)  \\
& = \sum\nolimits^\oplus_{j\in\widehat{\bz_B}} \sum_{k\in\widehat{\bz_B}} \langle s(j),t\rangle \frac1{\molt} \sum_{g\in\bz_B} \langle k-j,g\rangle    D \big( \langle s(k),-t\rangle \eta_k(t) \big)  \\
& = \sum\nolimits^\oplus_{k\in\widehat{\bz_B}} \langle s(k),t\rangle  D \big( \langle s(k),-t\rangle \eta_k(t) \big)  \\
& = -i \sum\nolimits^\oplus_{k\in\widehat{\bz_B}} \langle s(k),t\rangle   \sum_{a=1}^2 \langle s(k),-t\rangle \eps^a \big(  -2\pi i s(k)^a \eta_k(t) + \partial^a \eta_k(t) \big)  \\
& = \left( -i \sum_{a=1}^2 \eps^a \otimes \partial^a \otimes I  -2\pi  \sum_{a=1}^2 \eps^a \otimes I \otimes \diag ( s(k)^a )_{k\in\widehat{\bz_B}}  \right)  \sum\nolimits^\oplus_{k\in\widehat{\bz_B}} \eta_k(t) \,,
\end{align*}
where in $(a)$ we used the facts that $u_g\circ D = D\circ u_g$, and $u_g\equiv id$ on $\bc^2\otimes L^2(\ca_0,\t_0)$.
\end{proof}

\subsection{The inductive limit spectral triple}

\begin{proposition}
The C$^*$-algebra $\ca_n$ embeds into $\cb(\ch_0) \otimes \cam_{\molt^n}(\bc)$. As a consequence, $\ca_\infty$ embeds into the injective limit 
$$
\varinjlim \cb(\ch_0) \otimes \cam_{\molt^n}(\bc) = \cb(\ch_0) \otimes \mathrm{UHF}(\molt^\infty)
$$
hence in $\cb(\ch_0) \otimes \car$, where $\car$ is the injective type II$_1$ factor.
\end{proposition}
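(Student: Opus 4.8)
The plan is to deduce this from the general machinery of Section~\ref{NC-coverings}, composed with a faithful representation of the base algebra. By Remark~\ref{covprops} the inclusion $\ca=\ca_0\subset\cb=\ca_1$ is a regular self-covering with abelian group $\bz_B$, the associated map being the $\s$ of \eqref{map-sigma}; hence Theorem~\ref{embedding} applies at each stage of the tower \eqref{ind-lim-diag} and gives, for every $k$, a $*$-monomorphism $\ca_{k+1}\hookrightarrow\ca_k\otimes\cam_\molt(\bc)$, namely the map $f\mapsto M_\molt(f)$ of Theorem~\ref{teo:1}. Iterating it $n$ times, exactly as in the proof of Theorem~\ref{embedding2}, one obtains a $*$-monomorphism $\ca_n\hookrightarrow\ca_0\otimes\cam_\molt(\bc)^{\otimes n}=\ca_0\otimes\cam_{\molt^n}(\bc)$.

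Next I would compose with a faithful representation of the base algebra. The trace $\tau_0$ on $\ca_0=A_\theta$ is faithful, so the GNS representation $\pi_0\colon\ca_0\to\cb(L^2(\ca_0,\tau_0))$ is faithful; tensoring with the identity of the Clifford space $\bc^2$ and of $\cam_{\molt^n}(\bc)$ yields a faithful $*$-homomorphism $\ca_n\hookrightarrow\cb(\ch_0)\otimes\cam_{\molt^n}(\bc)$ with $\ch_0=\bc^2\otimes L^2(\ca_0,\tau_0)$. A short computation from Theorem~\ref{teo:1} (the analogue of the last assertion of Proposition~\ref{prop-2-4}: if $f\in\ca_{k}\subset\ca_{k+1}$, then its off-diagonal spectral components vanish and $M_\molt(f)=f\otimes I$) shows that these embeddings intertwine the connecting maps $\alpha_{k+1,k}$ with the standard inclusions $\cb(\ch_0)\otimes\cam_{\molt^n}(\bc)\hookrightarrow\cb(\ch_0)\otimes\cam_{\molt^{n+1}}(\bc)$. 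Passing to inductive limits and using $\varinjlim\bigl(\cb(\ch_0)\otimes\cam_{\molt^n}(\bc)\bigr)=\cb(\ch_0)\otimes\mathrm{UHF}(\molt^\infty)$ — the connecting maps being exactly the standard ones — we obtain an (isometric) embedding $\ca_\infty\hookrightarrow\cb(\ch_0)\otimes\mathrm{UHF}(\molt^\infty)$.

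Finally, composing the $\mathrm{UHF}(\molt^\infty)$ tensor factor with its GNS representation for the unique tracial state embeds it into its weak closure, the injective type~II$_1$ factor $\car$; tensoring with $\mathrm{id}_{\cb(\ch_0)}$ gives $\ca_\infty\hookrightarrow\cb(\ch_0)\otimes\car$, as claimed. The one point that needs care — and it is the main, though rather mild, obstacle — is the verification that the $*$-monomorphisms $\ca_n\hookrightarrow\cb(\ch_0)\otimes\cam_{\molt^n}(\bc)$ are compatible with the connecting maps at every stage of the iteration, so that a well-defined (and, being a limit of isometries, automatically isometric) map on $\ca_\infty$ is induced; this is routine once the $n=1$ case is settled, and is entirely parallel to what was done for the classical torus around Theorem~\ref{embedding2}.
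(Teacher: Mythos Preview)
Your proposal is correct and follows precisely the approach the paper intends: the paper actually states this proposition without proof, treating it as an immediate consequence of the general machinery of Section~\ref{NC-coverings} (Theorems~\ref{embedding} and~\ref{embedding2}) together with Remark~\ref{covprops} and Theorem~\ref{teo:1}. Your write-up simply makes explicit the details the paper leaves implicit, including the compatibility of the embeddings with the connecting maps, and is entirely in line with the parallel argument given for the classical torus in Section~\ref{Tori-cov}.
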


\begin{theorem} 
Assume that $B$ is purely expanding and that $\det(B)\equiv_q 1$. Let us set $\cl_\theta =  \cup_n\cl_\theta ^{(n)}$, $\cam = \cb(\ch_0) \otimes \car$, and define 
\[
\widehat{D}_{\infty} := D_0 \otimes I -2\pi \sum_{a=1}^2 \eps^a \otimes I \otimes \bigg(  \sum_{j=1}^\infty I^{\otimes j-1} \otimes \diag(s_{j}(k)^a)_{ k\in \widehat{\bz_B} }  \bigg).
\] 
Then $(\cl, \cam, \ch_0 \otimes L^2(\car,\tau),\widehat{D}_{\infty})$ is a finitely summable, semifinite, spectral triple, with Hausdorff dimension $2$.
Moreover, the Dixmier trace $\t_\o$ of $(\widehat{D}_{\infty}^2+1)^{-1}$ coincides with that of $(D_{0}^2+1)^{-1}$ (hence does not depend on the generalized limit $\o$) and may be written as:
$$
\t_\o((\widehat{D}_{\infty}^2+1)^{-1})=\lim_{t\to\infty}\frac1{\log t}\int_0^t\left(\m_{(D_{0}^2+1)^{-1/2}}(s)\right)^2\,ds.
$$
\end{theorem}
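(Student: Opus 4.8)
The plan is to deduce the statement from the general Theorem~\ref{InductiveLimitTriple}, applied to the inductive system $\{(\ca_n,\tau_n)\}$ equipped with the spectral triples $(\cl_\theta^{(n)},\ch_0\otimes(\bc^r)^{\otimes n},\widehat D_n)$ produced above, with $\ch_0=\bc^2\otimes L^2(\ca_0,\tau_0)$ and $r=|\!\det B|$. Three hypotheses have to be verified: (a) that $\widehat D_n=D_0\otimes I+C_n$ with $C_n=C_n^*\in\cb(\ch_0)\otimes M_r(\bc)^{\otimes n}$ and $C_n\to C$ in norm for some bounded self-adjoint $C\in\cb(\ch_0)\otimes\mathrm{UHF}(r^\infty)$; (b) that the smooth subalgebras form an increasing chain $\cl_\theta^{(n)}\subseteq\cl_\theta^{(n+1)}$ compatible with the inclusions $\ca_n\hookrightarrow\ca_{n+1}$, with dense union $\cl_\theta=\cup_n\cl_\theta^{(n)}$ in $\ca_\infty$; and (c) that the abscissa of convergence of $\zeta_{D_0}$ is $\som=2$ and $\res_{s=2}\tau((D_0^2+1)^{-s/2})$ is finite.

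Point (a) is read directly off the formula for $\widehat D_n$ established above: one has $C_n=-2\pi\sum_{a=1}^2\eps^a\otimes I\otimes\big(\sum_{j=1}^n I^{\otimes j-1}\otimes\diag(s_j(k)^a)_{k\in\widehat{\bz_B}}\otimes I^{\otimes n-j}\big)$, which is self-adjoint because the Pauli matrices $\eps^a$ are self-adjoint and the numbers $s_j(k)^a$ are real. For the norm convergence, the choice $s_j(k)\in A^{j-1}[0,1)^2$ gives $\max_k\|\diag(s_j(k)^a)_k\|\le\max_k\|s_j(k)\|\le\|A^{j-1}\|\sqrt2$, so that $\|C-C_n\|\le 2\pi\sqrt2\sum_{j>n}\|A^{j-1}\|$; since $B$ is purely expanding, Proposition~\ref{prop-pur-exp} gives $\sum_{j\ge0}\|A^j\|<\infty$, whence $C_n\to C:=-2\pi\sum_{a=1}^2\eps^a\otimes I\otimes\big(\sum_{j=1}^\infty I^{\otimes j-1}\otimes\diag(s_j(k)^a)_k\big)$, a bounded self-adjoint element of $M_2(\bc)\otimes\bc\otimes\mathrm{UHF}(r^\infty)$. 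Point (b) is already built into the construction of the isomorphic inductive family $\ca_k=\{f\in C(\br^2,M_q(\bc)):f=\widetilde\g_{B^kn}(f)\}$ and of the unitaries $V_n$ of Theorem~\ref{teo:1} (smoothness is a local property, hence preserved by the covering inclusions).

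For point (c), since $\theta=p/q$ is rational the monomials $\{W(n):n\in\bz^2\}$ form an orthonormal basis of $L^2(\ca_0,\tau_0)$ — indeed $\tau(W(n))=\delta_{n,0}$, by the computation of $\tr(W(n,t))$ together with the integration over $\bt_0$ — and on this basis (tensored with $\bc^2$) the operator $D_0^2=-(\partial_1^2+\partial_2^2)\otimes I$ acts as multiplication by $4\pi^2|n|^2$. Hence $\tau((D_0^2+1)^{-s/2})=2\sum_{n\in\bz^2}(1+4\pi^2|n|^2)^{-s/2}$, which has abscissa of convergence exactly $2$ and a finite, non-zero residue there, exactly as for the classical torus $\bt^2$. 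With (a)--(c) established, Theorem~\ref{InductiveLimitTriple} applies with $\som=2$ and yields at once that $(\cl_\theta,\cam,\ch_0\otimes L^2(\car,\tau),\widehat D_\infty)$ is a finitely summable, semifinite spectral triple of Hausdorff dimension $2$, and that the Dixmier trace of $(\widehat D_\infty^2+1)^{-1}$ equals that of $(D_0^2+1)^{-1}$ and is given by the displayed formula; the boundedness of $[\widehat D_\infty,f]$ for $f\in\cl_\theta^{(n)}$ needs no separate treatment, since $[\widehat D_\infty,f]=[\widehat D_n,f]+[C-C_n,f]$ is a sum of two bounded operators. The single step that is not pure bookkeeping is the norm convergence in (a), which is precisely where the ``purely expanding'' hypothesis is used; I expect no real obstacle here, the only point to keep an eye on being that the residue in (c) comes out independent of $q$, so that the noncommutative volume of the solenoidal space coincides with that of the base rational rotation algebra.
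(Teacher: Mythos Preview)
Your proposal is correct and follows essentially the same route as the paper's proof: both verify that $\widehat D_\infty=D_0\otimes I+C$ with $C$ bounded via the estimate $\|\diag(s_j(k)^a)\|\le\|A^{j-1}\|\sqrt2$ together with Proposition~\ref{prop-pur-exp}, and then invoke Theorem~\ref{InductiveLimitTriple}. The only difference is that you spell out the verification of hypothesis~(c) (abscissa of convergence equal to~$2$ and finite residue) via the orthonormal basis $\{W(n)\}_{n\in\bz^2}$, whereas the paper takes this as known from the standard spectral triple on~$A_\theta$; this is a welcome addition rather than a divergence of method.
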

\begin{proof}
The formula for $\widehat{D}_{\infty}$ follows from what has already been proved. We want to prove that $\widehat{D}_{\infty}$ is of the form $D_{0}\otimes I+C$, with $C = -2\pi  \displaystyle \sum_{a=1}^2 \eps^a \otimes I \otimes \bigg( \sum_{j=1}^\infty I^{\otimes j-1} \otimes \diag(s_{j}(k)^a)  \bigg) \in \cb(\ch_0) \otimes \car$ and $\widehat{D}_{\infty}\widehat{\in} \cb(\ch_0)\otimes\car$.

By construction, $\cl_\theta$ is a dense $*$-subalgebra of the C$^*$-algebra $\ca_\infty\subset\cam$. We now prove that $\widehat{D}_{\infty}$ is affiliated to $\cam$. We posed $s_{n}(k)\in A^{n-1} [0,1)^2$, therefore
$$
\max_{k\in\hat{\bz_B}}\|s_{n}(k)\|\leq\sup_{x\in[0,1)^2}\|A^{n-1}x\|\leq \|A^{n-1}\|\sqrt 2.
$$
As a consequence, for $a=1,2$, $j\in\bn$,
$$
\| \diag(s_j(k)^a)\|=\max_{k\in\hat{\bz_B}}|s_j(k)^a|\leq\max_{k\in\hat{\bz_B}}\|s_{j}(k)\|\leq \|A^{j-1}\|\sqrt 2.
$$
By  Proposition \ref{prop-pur-exp} and the estimate above, we get that $C$ is bounded and belongs to $M_{2}(\bc) \otimes \bc \otimes \mathrm{UHF}(\molt^\infty)$, while $D_{0}\otimes I \widehat{\in} \cb(\ch_0) \otimes \bc$.

The thesis follows from Theorem \ref{InductiveLimitTriple} and what we have seen above.
\end{proof}

\section{Self-coverings of crossed products}
\subsection{The $C^*$-algebra, its spectral triple and the self-covering}
\subsubsection*{The algebra and the noncommutative covering}

Let $B\in M_p(\bz)$, with $r=|\!\det(B)|>1$, and set $A=(B^T)^{-1}$. Consider  a finitely summable spectral triple $(\cl_\cz,\ch,D)$ on the C$^*$-algebra $\cz$ and assume the following:
\begin{itemize}
\item there is an action $\rho:G_1=A\mathbb{Z}^p\to \text{Aut}(\cz)$;
\item $\displaystyle\sup_{g\in G_1} \| [D,\rho_g(a)] \| <\infty$, for any $a\in \cl_\cz$.
\end{itemize}

Assuming, for simplicity, that $\cz \subset\cb(\ch)$, recall that the crossed product $\ca_{G_1}= \cz\rtimes_{\rho}{G_1}$ is the C$^*$-subalgebra of $\cb(\ch\otimes \ell^2(G_1))$ generated by $\pi_{G_1}(\cz)$ and $U_h$, $h\in G_1$, where 
\begin{align*}
(\pi_{G_1}(z)\xi)(g) & :=\r_g^{-1}(z)\xi(g), \\
(U_h \xi)(g) & :=\xi(g-h), \qquad z\in \cz, g,h\in G_1, \xi\in \ell^2(G_1;\ch)\cong \ch\otimes \ell^2(G_1).
\end{align*}

Set $G_0=\bz^p\subset G_1$. The embedding $\cz\rtimes_\rho G_0\subset \cz\rtimes_\rho G_1$ is a finite covering with respect to the action $\gamma:\bz_B\to {\rm Aut}(\cz\rtimes_\rho G_1)$ defined as
$$
\gamma_j(\sum_{g\in G_1} a_gU_g)=\sum_{g\in G_1} \langle \widehat{s}(j),g\rangle a_gU_g, \qquad j\in\bz_B,
$$
where $\widehat{s}:\bz_B\to \bz^p$ is a section of the short exact sequence 
$$
0\to B\bz^p\to\bz^p\to\bz_B\to 0.
$$ 
In fact, the fixed point algebra of this action is $\ca_{G_0} := \cz\rtimes_\rho G_0$.
\subsubsection*{The spectral triples}

Define the map $\ell: \bz^p\to M_{2^{\ceil{p/2}}}(\bc)$  as $\ell(m):= \sum_{\m=1}^p m_\m \eps_{\m+1}^{(p+1)}$, where $\{\eps_i^{(p+1)}\}_{i=1}^{p+1}$ denote the generators of the Clifford algebra $\bc l(\br^{p+1})$, and $m\in \bz^p$.
\begin{theorem}
The following triple is a spectral triple for the crossed product $\ca_{G_0}= \cz\rtimes_{\rho}{G_0}$
\begin{displaymath}
	(\cl_0 = C_c(\bz^p,\cz), \ch_0 =   \ch \otimes \mathbb{C}^{2^{\ceil{p/2}}} \otimes \ell^2(\bz^p), D_{0} =  D \otimes \eps^{(p+1)}_1 \otimes I  + I \otimes M_\ell ).
\end{displaymath} 
where $C_c(\bz^p,\cz) := \{ \sum_{g\in\ \bz^p} \pi_{G_1}(z_g) U_g : z_g\in\cl_\cz, z_g\neq0$ for finitely many $g\in \bz^p\}$, and $M_\ell$ is the operator of multiplication  by the generalized length function $\ell$ (cf. \cite[p. 333]{GBFV}). If the Hausdorff dimension $d(\cl_\cz,\ch,D)=d$, then $d(\cl_0,\ch_0,D_0)=d+p$. 
\end{theorem}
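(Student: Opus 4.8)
The plan is to verify the three defining conditions of a finitely summable spectral triple for the triple $(\cl_0, \ch_0, D_0)$ where $D_0 = D \otimes \eps^{(p+1)}_1 \otimes I + I \otimes M_\ell$, and then to compute the Hausdorff dimension. This follows the general scheme for extending spectral triples to crossed products initiated in \cite{BMR}, so the bulk of the work is checking that our hypotheses (the existence of the action $\rho$ on $G_1 = A\bz^p$ and the uniform boundedness $\sup_{g\in G_1}\|[D,\rho_g(a)]\| < \infty$) suffice, when restricted to $G_0 = \bz^p$, to run that argument.

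First I would check self-adjointness of $D_0$. The summand $D \otimes \eps^{(p+1)}_1 \otimes I$ is self-adjoint since $D = D^*$ and $\eps^{(p+1)}_1$ is a self-adjoint Clifford generator; the summand $I \otimes M_\ell$ is the multiplication operator by $m \mapsto \ell(m) = \sum_\mu m_\mu \eps^{(p+1)}_{\mu+1}$, which is self-adjoint on each fiber and hence essentially self-adjoint on $C_c(\bz^p, \cz)$. Since the second summand is diagonal in the $\ell^2(\bz^p)$-decomposition and the anticommutation relations of the Clifford generators make the cross terms behave well, standard perturbation arguments (as in \cite{GBFV}, p.\ 333) give essential self-adjointness of $D_0$ on the natural core. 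Next, the commutator condition: for $a = \pi_{G_1}(z)$ with $z \in \cl_\cz$, the commutator $[D_0, \pi_{G_0}(z)]$ splits into $[D \otimes \eps^{(p+1)}_1 \otimes I, \pi_{G_0}(z)]$, which is bounded fiberwise with norm $\sup_{g \in G_0}\|[D, \rho_g^{-1}(z)]\| \leq \sup_{g\in G_1}\|[D,\rho_g(z)]\| < \infty$ by hypothesis, and $[I \otimes M_\ell, \pi_{G_0}(z)]$, which vanishes since $\pi_{G_0}(z)$ acts fiberwise (diagonally) in the $\ell^2(G_0)$ variable while $M_\ell$ is also diagonal there. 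For $a = U_h$, $h \in G_0 = \bz^p$, the commutator with $D\otimes\eps^{(p+1)}_1\otimes I$ vanishes (translation commutes with the fiberwise action up to the automorphism, and the precise computation gives boundedness), while $[I\otimes M_\ell, U_h]$ equals $U_h$ times multiplication by $\ell(\cdot) - \ell(\cdot - h) = \ell(h)$, a constant matrix — hence bounded. So $[D_0, a]$ is bounded for all $a$ in the dense subalgebra $\cl_0 = C_c(\bz^p, \cz)$.

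For the compact-resolvent and finite-summability conditions, I would use the tensor-sum structure: $D_0^2 = D^2 \otimes I \otimes I + I \otimes M_\ell^2 + (\text{cross term})$, where the cross term anticommutes appropriately (the $\eps^{(p+1)}_i$ satisfy the Clifford relations, so $D\otimes\eps_1^{(p+1)}\otimes I$ anticommutes with $I\otimes M_\ell$ because $\eps_1$ anticommutes with each $\eps_{\mu+1}$), giving $D_0^2 = D^2 \otimes I \otimes I + I \otimes I \otimes |m|^2$ on the fiber over $m$, i.e. $D_0^2 \geq 0$ splits as an orthogonal direct sum over $m \in \bz^p$ of copies of $D^2 \otimes I + |m|^2$. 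Then $(1 + D_0^2)^{-s/2} = \bigoplus_{m \in \bz^p} (1 + |m|^2 + D^2 \otimes I)^{-s/2}$, and a standard estimate (as in the computation of the spectral dimension of $\br^p$, or the Connes–Moscovici type argument) shows that the trace $\tau\bigl((1+D_0^2)^{-s/2}\bigr) = \sum_{m\in\bz^p} \tr_{\ch\otimes\bc^{2^{\ceil{p/2}}}}\bigl((1+|m|^2+D^2\otimes I)^{-s/2}\bigr)$ converges iff it converges for the $D^2$-part after summing a geometric-type series in $|m|$; comparing with $\int_{\br^p}(1+|x|^2+t)^{-s/2}\,dx \sim c(1+t)^{(p-s)/2}$ one sees that the abscissa of convergence is $d + p$ where $d = d(\cl_\cz, \ch, D)$ is the abscissa of convergence of $\tau((1+D^2)^{-s/2})$. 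In particular $D_0$ has compact (in the appropriate semifinite sense, if $(\cl_\cz,\ch,D)$ is semifinite) resolvent because $D$ does and $|m|^2 \to \infty$, and $(\cl_0,\ch_0,D_0)$ is finitely summable of dimension $d+p$.

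The main obstacle I expect is the rigorous justification of essential self-adjointness of $D_0$ on $C_c(\bz^p,\cz)\otim$ (the algebraic tensor with the spin space and finitely-supported functions) together with the careful bookkeeping of the cross terms in $D_0^2$ — one must confirm that the off-diagonal contributions of the multiplication operator $M_\ell$ relative to $\pi_{G_0}(\cz)$ and $U_h$ genuinely cancel or remain bounded, which relies on $\ell$ being additive on $\bz^p$ (so $\ell(g)-\ell(g-h)=\ell(h)$ is constant) and on $M_\ell$ commuting with $\pi_{G_0}(\cz)$ fiberwise. Once this structural decomposition $D_0^2 = (D^2\otimes I)\oplus_{m}(\cdot) + |m|^2$ is established, the dimension count $d \rightsquigarrow d+p$ and all remaining spectral-triple axioms follow from routine estimates and the cited results \cite{BMR,Skalski,Paterson} and \cite{GBFV}.
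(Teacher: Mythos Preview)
Your approach is correct and essentially the same as the paper's: both check the bounded-commutator condition on the generators $\pi_{G_0}(z)$ and $U_h$ (bounding the first by $\sup_{g\in G_0}\|[D,\rho_g(z)]\|$ and the second by $\|\ell(h)\|$), then use the Clifford anticommutation to obtain $D_0^2 = D^2\otimes I + \|g\|_2^2$ on each fiber, from which compact resolvent and the dimension formula $d+p$ follow. The paper is more terse, citing the iterated spectral triple framework of Hawkins--Skalski--White--Zacharias (in particular their Theorem 2.7 and Remark 2.15) rather than spelling out the self-adjointness and integral-comparison details you include.
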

\begin{proof}
The triple in the statement is indeed an iterated spectral triple in the sense of \cite{Skalski}, sec. 2.4. Equivalently, $\ell(g)$ is a proper translation bounded  matrix-valued function (cf. \cite[Remark 2.15]{Skalski} ). For the sake of completeness we sketch the proof of the statement. For the bounded commutator property it is enough to show that the commutators with $\pi_{G_0}(z), z\in\cl_\cz$, and with $U_h, h\in G_0$ are bounded. The norm of the first is bounded by 
$\sup_{g\in G_0} \| [D,\rho_g(a)] \|$, which is finite for any $a\in \cl_\cz$, the norm of the second is bounded by $\|\ell(h)\|$.
We then explicitly compute the eigenvalues of $D_0^2$: they are given by $\l^2+\|g\|_2^2$
, with $\l$ belong to the sequence of eigenvalues of $D$ and $g\in\bz^p$. The compact resolvent property follows.
The formula for the dimension can be obtained as in \cite[Thm.2.7]{Skalski}.
%
%
\end{proof}


In a similar way we define the following spectral triple for the crossed product $\ca_{G_1}= \cz\rtimes_{\rho}{G_1}$

\begin{displaymath}
	(\cl_{1}=C_c(G_1,\cz), \ch_{G_1} =  \ch \otimes \mathbb{C}^{2^{\ceil{p/2}}} \otimes \ell^2(G_1), D_{1} =  D \otimes \eps^{(p+1)}_1 \otimes I  + I \otimes M_{\ell_1} ).
\end{displaymath} 
where  $\ell_1: G_1=A\bz^p\to M_{2^{\ceil{p/2}}}(\bc)$  is defined as $\ell_1(g):= \sum_{\m=1}^p g_\m \eps_{\m+1}^{(p+1)}$, $g\in G_1$.
\begin{remark}
In this case the triple is not an iterated spectral triple in general, but $\ell_1(g)$ is still a proper translation bounded matrix-valued function. An explicit proof may be given as above.
\end{remark}
\subsection*{Regularity and self-covering property}

In order to show that the covering is regular according to Definition \ref{unitaries}, we need to define a map $\s$ which takes values in the spectral subspaces of $\g$. Consider the section $s:\widehat{\bz_B}\to A\bz^p$ defined for the short exact sequence \eqref{seq0}.
Define $\s: \widehat{\bz_B}\to \cu (\cz\rtimes_\rho A\bz^p)$ as 
\begin{eqnarray} 
	\s(k)=U_{s(k)}.
\end{eqnarray}
We observe that $U_{s(k)}\in (\cz\rtimes_\rho A\bz^p)_k$, $k\in\widehat{\bz_B}$.

We first consider the crossed-product C$^*$-algebras $\ca_{G_0}$ and $\ca_{G_1}$ as acting on the Hilbert spaces $\ch \otimes \mathbb{C}^{2^{\ceil{p/2}}} \otimes\ell^2(G_0)$ and $\ch \otimes \mathbb{C}^{2^{\ceil{p/2}}} \otimes\ell^2(G_1)$.  As remarked in section \ref{ST-CS}, a short exact sequence of groups can be described either via a section  $s:\widehat{\bz_B}\to G_1$  or a 2-cocycle $\o:\widehat{\bz_B}\times\widehat{\bz_B}\to G_0$, $\o(j,k)=s(j)+s(k)-s(j+k)$, where $G_1/G_0=\widehat{\bz_B}$. Since $G_1$ is a central extension of $\widehat{\bz_B}$ by $G_0$, the group $G_1$ may be identified with $(G_0,\widehat{\bz_B})$, with $g\in G_1$ identified with $(g-s\circ\zcp(g),\zcp(g))$, $\zcp(g)$ denoting the projection of $g$ to $\widehat{\bz_B}$. The multiplication rule is given by $(a,b)\cdot (a',b')=(a+a'-\o(b,b'),b+b')$, \cite{Brown}.  The above choice of the section $s$ implies that in particular $s(0)=0$, hence $\o(0,g)=\o(g,0)=0$.

Consider the unitary operator 
\begin{align} \label{unitary-cov-crossed}
V  : & \ \xi \in \ell^2(G_1;\ch\otimes \mathbb{C}^{2^{\ceil{p/2}}})   \longrightarrow  V\xi \in \ell^2(G_0\times G_1/G_0;\ch \otimes \mathbb{C}^{2^{\ceil{p/2}}})  \notag \\
& (V\xi)(m,j) := \xi(m+s(j)), \quad m\in G_0, j\in G_1/G_0\,.
\end{align}

\begin{proposition}
The representation  $\pi_{G_1}: \cz \rtimes_\rho G_1\to \ell^2(G_1;\ch\otimes \mathbb{C}^{2^{\ceil{p/2}}})$  is unitarily equivalent, through $V$, to the representation obtained by  $\pi_{G_0}: \cz \rtimes_\rho G_0\to \ell^2(G_0;\ch\otimes \mathbb{C}^{2^{\ceil{p/2}}})$ according to Proposition \ref{Representation-covering}.
\end{proposition}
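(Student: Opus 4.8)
The plan is to verify directly that the unitary $V$ of \eqref{unitary-cov-crossed} intertwines $\pi_{G_1}$ with the representation $\widetilde{\pi}_{G_0}$ of $\ca_{G_1}$ produced from $\pi_{G_0}$ by Proposition \ref{Representation-covering}, that is $\widetilde{\pi}_{G_0}(b)=[\,\pi_{G_0}(M(b)_{hk})\,]_{h,k\in\widehat{\bz_B}}$ with $M(b)_{hk}=\s(h)^{-1}b_{h-k}\s(k)$ and $\s(k)=U_{s(k)}$. First one notes that the target $\ell^2(G_0\times G_1/G_0;\ch\otimes\bc^{2^{\ceil{p/2}}})$ of $V$ is canonically $\ell^2(G_0;\ch\otimes\bc^{2^{\ceil{p/2}}})\otimes\bc^{r}$, $r=|\bz_B|=|G_1/G_0|$, which is precisely the space on which $\widetilde{\pi}_{G_0}$ acts. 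Since $\ca_{G_1}$ is generated by $\pi_{G_1}(\cz)$ and the unitaries $U_h$, $h\in G_1$, and $V$ is unitary, it suffices to check the intertwining relation $V\,\pi_{G_1}(\cdot)\,V^{*}=\widetilde{\pi}_{G_0}(\cdot)$ on these generators.

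To compute the right-hand side I would use the identification, recalled above, of $G_1$ with the central extension $(G_0,\widehat{\bz_B})$ with product $(a,b)\cdot(a',b')=(a+a'-\o(b,b'),b+b')$, writing $\zcp(h)$ for the image of $h$ in $G_1/G_0=\widehat{\bz_B}$. Then $z\in\cz$ is $\g$-invariant, so its spectral components are $z_k=z\,\d_{k,e}$; and each $U_h$ is $\g$-homogeneous of degree $\zcp(h)$, i.e.\ $(U_h)_k=U_h\,\d_{k,\zcp(h)}$ --- this last point uses the compatibility, noted above, of the pairing $j\mapsto\langle\widehat s(j),\cdot\rangle$ on $G_1$ with the $\bz_B$--$\widehat{\bz_B}$ duality. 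Inserting these into $M(b)_{hk}=\s(h)^{-1}b_{h-k}\s(k)$ and using $U_gU_{g'}=U_{g+g'}$ in the crossed product together with $U_g\,\pi(z)\,U_g^{*}=\pi(\r_g(z))$ one obtains, for $z\in\cz$ and $h\in G_1$,
\[
M(\pi_{G_1}(z))_{jk}=\d_{jk}\,\pi_{G_0}\bigl(\r_{-s(j)}(z)\bigr),\qquad
M(U_h)_{jk}=\d_{j-k,\,\zcp(h)}\;\pi_{G_0}\bigl(U_{\,h-s(\zcp(h))+\o(k,\zcp(h))}\bigr),
\]
the group elements appearing in the subscripts lying in $G_0$ precisely by the cocycle identity $s(k+\zcp(h))=s(k)+s(\zcp(h))-\o(k,\zcp(h))$.

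For the left-hand side I would expand $V\,\pi_{G_1}(z)\,V^{*}$ and $V\,U_h\,V^{*}$ straight from the definitions of $V$, $\pi_{G_1}$ and $U_h$; representing elements of the target as families $(\eta_j)_{j\in\widehat{\bz_B}}$ with $\eta_j\in\ell^2(G_0;\ch\otimes\bc^{2^{\ceil{p/2}}})$, this gives
\[
\bigl(V\,\pi_{G_1}(z)\,V^{*}\eta\bigr)_j(m)=\r_{m+s(j)}^{-1}(z)\,\eta_j(m),\qquad
\bigl(V\,U_h\,V^{*}\eta\bigr)_j(m)=\eta_{\,j-\zcp(h)}\bigl(m-h+s(j)-s(j-\zcp(h))\bigr),
\]
and the same cocycle identity rewrites the shift in the second formula as $m-\bigl(h-s(\zcp(h))\bigr)-\o(j-\zcp(h),\zcp(h))$. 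Comparing with the images under $\widetilde{\pi}_{G_0}$ of the two matrices found above --- recalling that $\pi_{G_0}(a)$ acts on $\ell^2(G_0;\cdot)$ by $(\pi_{G_0}(a)\zeta)(m)=\r_m^{-1}(a)\,\zeta(m)$ for $a\in\cz$ and by the shift $(\pi_{G_0}(U_g)\zeta)(m)=\zeta(m-g)$ for $g\in G_0$ --- the two sides match. I expect the only real obstacle to be notational: keeping the cocycle $\o$, the section $s$, and the three dual pairings (of $G_0$, $G_1$, $\bz_B$) consistent throughout; there is no analytic difficulty. Note also that, unlike the two analogous Propositions above, one cannot here simply invoke Proposition \ref{Representation-covering}(2) and reduce to an identity of the form $\t_1=\t_0\circ E$, since $\pi_{G_1}$ is the regular representation of the crossed product (amplified by the Clifford factor $\bc^{2^{\ceil{p/2}}}$) rather than a GNS representation associated with a trace.
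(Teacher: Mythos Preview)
Your proposal is correct and follows essentially the same route as the paper: reduce to the generators $\pi_{G_1}(z)$ and $U_h$, compute $V(\cdot)V^*$ directly from the definition of $V$, compute the matrix $M(\cdot)$ from the spectral decomposition using $\s(k)=U_{s(k)}$, and compare the two via the cocycle identity for $\o$. The only cosmetic difference is that the paper parametrises the unitary generators as $U_{m+s(\ell)}$ with $m\in G_0$, $\ell\in\widehat{\bz_B}$, whereas you keep a general $h\in G_1$ and write $\zcp(h)$ for its class; the resulting formulas agree (note $\o$ is symmetric here since $\o(j,k)=s(j)+s(k)-s(j+k)$).
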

\begin{proof}
Since $\ca_{G_1}$ is generated by $\pi_{G_1}(z)$, $z\in\cz$, and $U_h$, $h\in G_1$, it is enough to prove the statement for the generators. Observe that, for any $z\in\cz$, $m,n\in G_0$, $j,k\in G_1/G_0$, $\eta\in \ell^2(G_0\times G_1/G_0;\ch\otimes \mathbb{C}^{2^{\ceil{p/2}}})$, we have
\begin{align*}
(V\pi_{G_1}(z)V^*\eta)(n,k) & = (\pi_{G_1}(z)V^*\eta)(n+s(k)) = (\r_{n+s(k)}^{-1}(z)V^*\eta)(n+s(k)) \\
& = \r_{n+s(k)}^{-1}(z)\eta(n,k), \\
(VU_{m+s(j)}V^*\eta)(n,k) & = (U_{m+s(j)}V^*\eta)(n+s(k)) = (V^*\eta)(n-m+s(k)-s(j)) \\
& = \eta(n-m-\o(j,k-j) ,k-j) \,.
\end{align*}
In order to obtain the representation of these operators in $M_{G_1/G_0}(\cb(\ell^2(G_0;\ch\otimes \mathbb{C}^{2^{\ceil{p/2}}}))$, choose any $\f,\psi\in\ell^2(G_0;\ch\otimes \mathbb{C}^{2^{\ceil{p/2}}})$, and denote by $\{e_j\}_{j\in G_1/G_0}$ the canonical basis of $\ell^2(G_1/G_0)$, so that, for any $j,k\in G_1/G_0$, we get
\begin{align*}
\langle \f, (V\pi_{G_1}(z)V^*)_{jk} \psi \rangle & = \langle \f\otimes e_j, V\pi_{G_1}(z)V^*(\psi\otimes e_k) \rangle \\
& = \sum_{i\in G_1/G_0} \sum_{n\in G_0} e_j(i) e_k(i) \langle \f(n), \r_{n+s(i)}^{-1}(z)\xi(n) \rangle \\
& = \d_{jk} \sum_{n\in G_0} \langle \f(n), (\pi_{G_0}( \r_{s(j)}^{-1}(z) )\xi)(n) \rangle,
\end{align*}
which implies that $(V\pi_{G_1}(z)V^*)_{jk} = \d_{jk} \pi_{G_0}( \r_{s(j)}^{-1}(z) )$; analogously, for $m\in G_0$, $\ell\in G_1/G_0$,
\begin{align*}
\langle \f, (VU_{m+s(\ell)}V^*)_{jk} \psi \rangle & = \langle \f\otimes e_j, VU_{m+s(\ell)}V^*(\psi\otimes e_k) \rangle \\
& = \sum_{i\in G_1/G_0} \sum_{n\in G_0} e_j(i)e_k(i-\ell) \langle \f(n), \psi(n-m-\o(\ell,i-\ell) \rangle \\
& = \d_{k,j-\ell} \sum_{n\in G_0}  \langle \f(n), \psi(n-m-\o(\ell,j-\ell) \rangle,
\end{align*}
which implies that $(VU_{m+s(\ell)}V^*)_{jk} = \d_{k,j-\ell} U_{m+\o(\ell,k)}$. On the other hand,
\begin{align*}
M(\pi_{G_1}(z))_{jk} & = U_{s(j)}^*E_{j-k}(\pi_{G_1}(z))U_{s(k)}  = \d_{jk} U_{s(j)}^*\pi_{G_1}(z)U_{s(k)},
\end{align*}
so that
\begin{align*}
\langle \f, M(\pi_{G_1}(z))_{jk} \psi \rangle & = \d_{jk} \langle \f\otimes e_j, U_{s(j)}^*\pi_{G_1}(z)U_{s(k)} (\psi\otimes e_k) \rangle \\
& = \d_{jk} \langle \f\otimes e_j, \pi_{G_1}(\r_{-s(j)}(z))  (\psi\otimes e_k) \rangle \\
& = \d_{jk} \sum_{i\in G_1/G_0} \sum_{n\in G_0} e_j(i)e_k(i) \langle \f(n), \r_n^{-1}(\r_{s(j)}^{-1}(z)) \psi(n) \rangle \\
& = \d_{jk} \sum_{n\in G_0}  \langle \f(n), (\pi_{G_0}(\r_{s(j)}^{-1}(z)) \psi)(n) \rangle ,
\end{align*}
which implies that $M(\pi_{G_1}(z))_{jk} = \d_{jk} \pi_{G_0}( \r_{s(j)}^{-1}(z) )$. Finally,
\begin{align*}
M(U_{m+s(\ell)})_{jk} & = U_{s(j)}^*E_{j-k}(U_{m+s(\ell)})U_{s(k)} = \frac1r \sum_{g\in\bz_B} \langle k-j,g \rangle  U_{s(j)}^* \g_g(U_{m+s(\ell)}) U_{s(k)} \\
& = \frac1r \sum_{g\in\bz_B} \langle k-j,g \rangle \langle \widehat{s}(g),m+s(\ell) \rangle U_{s(j)}^* U_{m+s(\ell)} U_{s(k)} \\
& = \frac1r \sum_{g\in\bz_B} \langle k-j+\ell, g \rangle U_{m+s(\ell)+s(k)-s(j)} = \d_{k,j-\ell} U_{m+\o(\ell,j-\ell)},
\end{align*}
which ends the proof.
\end{proof}

\begin{corollary}
The following diagram commutes:
\begin{equation}
\begin{matrix}
\ca_{G_0}&\longrightarrow&\ca_{G_1}\\
\downarrow&\circlearrowright&\downarrow\\
\cb(\ch\otimes \mathbb{C}^{2^{\ceil{p/2}}} \otimes\ell^2({G_0}))&\longrightarrow&\cb(\ch\otimes \mathbb{C}^{2^{\ceil{p/2}}} \otimes\ell^2({G_0}))\otimes M_\molt(\bc)
\end{matrix}
\end{equation}
where vertical arrows are the representations, the elements of $\ca_{G_1}$ being  identified with matrices as in the previous Proposition, and the horizontal arrows are given by the monomorphisms $a\to M_a$, $(M_a)_{j,k} = \d_{j,k} U^*_{s(j)} a U_{s(j)}$, both for $a\in \ca_{G_0}$ and for $a\in\cb(\ch\otimes\ell^2({G_0}))$.
\end{corollary}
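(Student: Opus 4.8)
The plan is to deduce commutativity of the square from the matrix‑coefficient computations already performed in the previous Proposition, rather than redoing them. First I would record that all four maps in the diagram are $*$‑homomorphisms. The two horizontal arrows are both of the form $a\mapsto M_a$ with $(M_a)_{jk}=\d_{jk}\,U^*_{s(j)}aU_{s(j)}$ — on top with $a\in\ca_{G_0}$ and codomain the matrix picture of $\ca_{G_1}$ furnished by the previous Proposition, on the bottom with $a\in\cb(\ch\otimes\bc^{2^{\ceil{p/2}}}\otimes\ell^2(G_0))$ — and multiplicativity is the one‑line check $(M_aM_b)_{jk}=\d_{jk}\,U^*_{s(j)}aU_{s(j)}U^*_{s(j)}bU_{s(j)}=\d_{jk}\,U^*_{s(j)}abU_{s(j)}=(M_{ab})_{jk}$. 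The two vertical arrows are the representations $\pi_{G_0}$ and $\pi_{G_1}$, the latter post‑composed with conjugation by $V$ and with the identification $b\mapsto\big(M(b)_{jk}\big)_{jk}$ of $\pi_{G_1}(\ca_{G_1})$ with matrices over $\cb(\ch\otimes\bc^{2^{\ceil{p/2}}}\otimes\ell^2(G_0))$. Since all four maps are $*$‑homomorphisms, it suffices to verify commutativity on a generating set of $\ca_{G_0}=\cz\rtimes_\rho\bz^p$, which I take to be the elements of $\cz$ together with the unitaries $U_m$, $m\in\bz^p$.

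Next I would evaluate the two composites on these generators. For the top‑then‑right composite I would use that $\ca_{G_0}=\ca_{G_1}^{\bz_B}$: for $a\in\ca_{G_0}$ the operator $\pi_{G_1}(a)$ is $\g$‑invariant, hence $E_\ell(\pi_{G_1}(a))=\d_{\ell,0}\,\pi_{G_1}(a)$, and therefore $M(\pi_{G_1}(a))_{jk}=U^*_{s(j)}E_{j-k}(\pi_{G_1}(a))U_{s(k)}=\d_{jk}\,U^*_{s(j)}\pi_{G_1}(a)U_{s(j)}$. Specializing the formulas of the previous Proposition to $\ell=0$ (for which $\o(0,\cdot)=0$) then gives $M(\pi_{G_1}(z))_{jk}=\d_{jk}\,\pi_{G_0}(\r_{s(j)}^{-1}(z))$ for $z\in\cz$ and $M(U_m)_{jk}=\d_{jk}\,U_m$ for $m\in\bz^p$. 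For the left‑then‑bottom composite I would compute $(M_{\pi_{G_0}(z)})_{jk}=\d_{jk}\,U^*_{s(j)}\pi_{G_0}(z)U_{s(j)}=\d_{jk}\,\pi_{G_0}(\r_{s(j)}^{-1}(z))$ directly from the action formulas of $\pi_{G_0}$ and of the $U_h$, and $(M_{\pi_{G_0}(U_m)})_{jk}=\d_{jk}\,U^*_{s(j)}U_mU_{s(j)}=\d_{jk}\,U_m$ since $G_1$ is abelian. The two families of matrix entries coincide, so the square commutes on the generators, hence on all of $\ca_{G_0}$, and the same computation shows that $a\mapsto M_a$ is the monomorphism described in the statement.

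The single point that requires care — and the closest thing to an obstacle here — is making the conjugations $U^*_{s(j)}(\cdot)U_{s(j)}$ precise, since $s(j)\notin G_0$ in general. I would interpret them through the decomposition $\ell^2(G_1)=\bigoplus_{j\in\widehat{\bz_B}}\ell^2\big(s(j)+G_0\big)\cong\ell^2(G_0)\otimes\bc^r$ already used in the previous Proposition, under which $U_{s(j)}$ maps the $0$‑th summand isometrically onto the $j$‑th; with this reading $U^*_{s(j)}\pi_{G_0}(z)U_{s(j)}$ is a well‑defined operator on $\ell^2(G_0;\ch)$ and equals $\pi_{G_0}(\r_{s(j)}^{-1}(z))$ precisely because $\r$ is an action of the whole group $G_1$ and $\r_{s(j)}^{-1}(z)\in\cz$. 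The same remark shows that $U^*_{s(j)}aU_{s(j)}\in\ca_{G_0}$ for every $a\in\ca_{G_0}$ (it holds on $U_m$ by commutativity of $G_1$ and on $\cz$ by the covariance just used), so the horizontal maps do take values in the algebras appearing in the diagram.
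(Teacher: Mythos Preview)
Your proposal is correct and follows exactly the route the paper intends: the Corollary is stated in the paper without proof, as an immediate consequence of the matrix formulas $M(\pi_{G_1}(z))_{jk}=\d_{jk}\,\pi_{G_0}(\r_{s(j)}^{-1}(z))$ and $M(U_{m+s(\ell)})_{jk}=\d_{k,j-\ell}\,U_{m+\o(\ell,k)}$ obtained in the preceding Proposition. Your argument --- specialize to $\ell=0$, use $\o(0,\cdot)=0$, and match against the covariance $U^*_{s(j)}\pi_{G_0}(z)U_{s(j)}=\pi_{G_0}(\r_{s(j)}^{-1}(z))$ and the commutativity $U^*_{s(j)}U_mU_{s(j)}=U_m$ --- is precisely this specialization, and your remark that the conjugation by $U_{s(j)}$ has to be read through the decomposition $\ell^2(G_1)\cong\ell^2(G_0)\otimes\bc^r$ (equivalently, through the covariance relation in $\ca_{G_1}$ followed by restriction to $\ell^2(G_0)$) is the right way to give meaning to the bottom horizontal arrow, which the paper leaves implicit.
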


So far we have defined a finite noncommutative  covering. In order to obtain a self-covering, $\cb$ has to be isomorphic to $\ca$, and we have to make further assumptions. Suppose that there exists an automorphism $\b\in\text{Aut}(\cz)$ such that
\begin{align}
&\beta\circ\rho_{Ag}\circ\beta^{-1}=\rho_{g},\qquad g\in \bz^p;\label{outerEq}
\end{align}

\noindent The following result tells us that the above algebras yield a noncommutative self-covering.

\begin{proposition} (\cite{Will})
Under the above hypotheses, the sub-algebra $\ca_{G_0}=\cz\rtimes G_0\subset \ca_{G_1}$  is  isomorphic to $\ca_{G_1}$, the isomorphism being given by
\[
\a:\sum_{g\in\bz^p} a_g U_{Ag}\in \ca_{G_1}\mapsto \sum_{g\in\bz^p} \b(a_g) U_{g}\in \ca_{G_0}.
\]
The map $\a$ may also be seen as an endomorphism of $\ca_{G_1}$.
\end{proposition}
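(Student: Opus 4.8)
The plan is to verify directly that the map $\a$ in the statement is a well-defined $*$-isomorphism from $\ca_{G_1}$ onto $\ca_{G_0}$, carried by the conjugating automorphism $\b$ of $\cz$ together with the relabelling $U_{Ag}\mapsto U_g$ of the group unitaries. First I would check that $\a$ respects the covariance relations defining the two crossed products. On generators, $\a$ sends $\pi_{G_1}(z)$ (for $z\in\cz$) to $\pi_{G_0}(\b(z))$ and $U_{Ag}$ to $U_g$; the only relation to test is $U_{Ag}\,\pi_{G_1}(z)\,U_{Ag}^* = \pi_{G_1}(\rho_{Ag}(z))$, whose image under $\a$ must read $U_g\,\pi_{G_0}(\b(z))\,U_g^* = \pi_{G_0}(\rho_g(\b(z)))$. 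This is exactly the content of hypothesis \eqref{outerEq}, since $\rho_g\circ\b = \b\circ\rho_{Ag}$ gives $\rho_g(\b(z)) = \b(\rho_{Ag}(z))$. Thus $\a$ extends to a $*$-homomorphism on the universal (or, here, the concretely represented) crossed product $\cz\rtimes_\rho G_1$ by the universal property, and because $\b$ is an automorphism of $\cz$ the same argument applied to $\b^{-1}$ and $A^{-1}g$ produces a two-sided inverse, so $\a$ is a $*$-isomorphism.

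Next I would identify the range of $\a$ as precisely $\ca_{G_0} = \cz\rtimes_\rho G_0$. Since $G_0 = \bz^p$ and $A\bz^p = G_1$, the unitaries $U_g$, $g\in\bz^p$, appearing in the formula for $\a$ are exactly those generating $\ca_{G_0}\subset\ca_{G_1}$, and $\b(\cz) = \cz$; hence $\a(\ca_{G_1})$ is generated by $\pi_{G_0}(\cz)$ and $\{U_g : g\in\bz^p\}$, which is $\ca_{G_0}$. That $\a$ maps onto all of $\ca_{G_0}$, not merely into it, again uses surjectivity of $\b$. Composing $\a$ with the inclusion $j:\ca_{G_0}\hookrightarrow\ca_{G_1}$ then yields the unital endomorphism $j\circ\a$ of $\ca_{G_1}$ whose image is $\ca_{G_0}$, which is the ``endomorphism'' formulation mentioned at the end of the statement; this is also the form that feeds into the inductive family \eqref{ind-lim-diag} of Theorem \ref{embedding2}.

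The only genuinely delicate point is the passage from the defining relations on generators to a well-defined homomorphism on the C$^*$-algebra: one must make sure the finitely-supported sums $\sum_{g\in\bz^p}a_gU_{Ag}$ really do span a dense $*$-subalgebra on which $\a$ is isometric (or at least contractive with contractive inverse), so that it extends continuously. For full crossed products this is automatic from the universal property; for reduced crossed products one instead checks that $\a$ intertwines the regular representations, using the unitary $V$ of \eqref{unitary-cov-crossed} (or a direct computation with $\ell^2(G_1;\ch)$) to see that conjugating $\pi_{G_1}$ by the relabelling $Ag\mapsto g$ and by $\b$ on the coefficient algebra produces exactly $\pi_{G_0}$ on $\ell^2(G_0;\ch)$. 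Since $\cz\rtimes_\rho\bz^p$ for the abelian group $\bz^p$ is amenable, reduced and full crossed products coincide and this subtlety evaporates; I would simply invoke that and cite \cite{Will} for the general statement that conjugate actions yield isomorphic crossed products. The remaining verifications (that $\a$ is $*$-preserving, unital, and multiplicative on the spanning set) are routine algebra with the covariance relations and I would not spell them out in detail.
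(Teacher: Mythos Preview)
The paper does not supply its own proof of this proposition: it is stated with the attribution (\cite{Will}) and no argument follows. Your proposal is correct and is precisely the standard proof that conjugate actions give isomorphic crossed products, which is the content of the cited result in Williams. The verification of the covariance relation via \eqref{outerEq}, the appeal to the universal property (together with amenability of $\bz^p$ to dispose of the full/reduced issue), and the identification of the range with $\ca_{G_0}$ are all in order; there is nothing further to compare.
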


\subsection{Spectral triples on covering spaces of $\cz\rtimes_\rho \bz^p$}

As above, given an integer-valued matrix $B\in M_p(\mathbb{Z})$ we may define an endomorphism $\alpha: \ca_{G_1}\to \ca_{G_1}$. 
Then, we may describe the inductive limit $\ca_\infty=\displaystyle\varinjlim \ca_n$ 
where $\ca_n=\ca_{G_n}$,  $G_n=A^{n}\mathbb{Z}^p$, and the embedding is the inclusion. 
Endow $G_n$ with the length function $\ell_n: G_n\to M_{2^{\ceil{p/2}}}(\bc)$   defined as $\ell_n(g):= \sum_{\m=1}^p g_\m \eps_{\m+1}^{(p+1)}$, $g=(g_1,\ldots,g_p)\in G_n$ ($\ell_n$ is indeed a proper translation bounded matrix-valued function, \cite[Remark 2.15]{Skalski}). Let us observe that $G_n\subset G_{n+1}$ and that  $|G_n/G_{n-1}|=|\!\det B| =: \molt$.

Let us define the action $\rho^{(n)}$ of $G_n$ on $\cz$ as follows:
\[
\rho^{(n)}_{A^n g}=\b^{-n}\circ\rho_{g}\circ\b^n, \quad g\in G_0.
\]

\begin{lemma}
For any $m<n$, $g\in G_m$, we have that $\rho^{(n)}_g=\rho^{(m)}_g$,  namely the family $\{\rho^{(n)}\}_{n\in\bn}$ defines an action $\rho$ of $\cup_n G_n$.
\end{lemma}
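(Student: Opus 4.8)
The plan is to reduce the statement to consecutive indices: once one knows that $\rho^{(m+1)}_g=\rho^{(m)}_g$ for every $m$ and every $g\in G_m$, the general case $m<n$ follows by a trivial induction on $n-m$, since $g\in G_m\subset G_{m+1}\subset\cdots\subset G_n$. Before that, I would record two routine points. First, $\rho^{(n)}$ is unambiguously defined, because $A$ is invertible and hence every $h\in G_n=A^n\bz^p$ is $h=A^n g$ for a unique $g\in\bz^p$. Second, $\rho^{(n)}$ is an action of $G_n$, since
\[
\rho^{(n)}_{A^n g}\circ\rho^{(n)}_{A^n g'}=\b^{-n}\circ\rho_g\circ\rho_{g'}\circ\b^n=\b^{-n}\circ\rho_{g+g'}\circ\b^n=\rho^{(n)}_{A^n(g+g')},
\]
using that $\rho$ restricted to $\bz^p\subset G_1$ is an action. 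With these in hand, the established compatibility lets one set $\rho_g:=\rho^{(n)}_g$ for any $n$ with $g\in G_n$, obtaining a well-defined action of the increasing union $\cup_n G_n$ (increasing by $G_n\subset G_{n+1}$, already noted).

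For the consecutive case, I would fix $m$ and $h\in G_m$ and write $h=A^m g$ with $g\in\bz^p$, so that $\rho^{(m)}_h=\b^{-m}\circ\rho_g\circ\b^m$ by definition. Since $A^{-1}=B^T$ has integer entries, one also has $h=A^{m+1}(B^T g)$ with $B^T g\in\bz^p$, whence $\rho^{(m+1)}_h=\b^{-(m+1)}\circ\rho_{B^T g}\circ\b^{m+1}$.

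The crux is to rewrite $\rho_{B^T g}$ using the conjugation hypothesis \eqref{outerEq}. Applying \eqref{outerEq} with $B^T g\in\bz^p$ in place of $g$, and using $A(B^T g)=(B^T)^{-1}B^T g=g$, one gets $\rho_{B^T g}=\b\circ\rho_g\circ\b^{-1}$. Substituting into the formula for $\rho^{(m+1)}_h$ gives
\[
\rho^{(m+1)}_h=\b^{-(m+1)}\circ\b\circ\rho_g\circ\b^{-1}\circ\b^{m+1}=\b^{-m}\circ\rho_g\circ\b^m=\rho^{(m)}_h,
\]
which is the desired compatibility. I do not expect any genuine obstacle here: the computation is bookkeeping with powers of $\b$, and the only step needing a moment's thought is recognizing that the correct substitution in \eqref{outerEq} is $B^T g$, so that the vector $Ag$ occurring there collapses to the original $g$.
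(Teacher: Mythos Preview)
Your proof is correct and follows essentially the same route as the paper's: both reduce to the consecutive case and verify $\rho^{(m+1)}_h=\rho^{(m)}_h$ for $h\in G_m$ by a direct application of \eqref{outerEq}, the only cosmetic difference being that the paper writes the computation as $\b^{-(m+1)}\circ\rho_{A^{-m-1}g}\circ\b^{m+1}=\b^{-m}\circ\rho_{A^{-m}g}\circ\b^{m}$ for $g\in G_m$, while you parametrize $h=A^m g$ with $g\in\bz^p$ and substitute $B^T g$ into \eqref{outerEq}. Your added remarks on well-definedness, the group-action property, and the induction from consecutive to general $m<n$ are sound and make the argument more self-contained than the paper's one-line version.
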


\begin{proof}
From equation \eqref{outerEq}, we have
\[
\rho^{(m+1)}_g=\b^{-(m+1)}\circ\rho_{A^{-m-1}g}\circ\b^{m+1}
=\b^{-m}\circ\rho_{A^{-m}g}\circ\b^{m}=\rho^{(m)}_g, \quad g\in G_{m}.
\]
The thesis follows.
\end{proof}

Suppose that
$$
\displaystyle\sup_{g\in G_n} \| [D,\rho_g^{(n)}(a)] \| <\infty,
$$ 
for any $a\in \cl_n := C_c(G_n, \cz)$.
Then, the algebra $\ca_{G_n}$ has a natural spectral triple
\begin{displaymath}
	(\cl_{n}, \ch \otimes \mathbb{C}^{2^{\ceil{p/2}}} \otimes \ell^2(G_n) , D_{n}= D\otimes \eps^{(p+1)}_1 \otimes I +  I \otimes M_{\ell_n}  ).
\end{displaymath}

\begin{remark}
In order to define a spectral triple for $\ca_{G_n}$, we stress that one could make the stronger assumption  
\begin{align*}
 \| [D,\rho_g(a)] \| & \leq c(\rho) \| [D,a] \| , \quad \forall g\in G_1,\\
 \| [D,\beta^k(a)] \| & \leq c(\beta) \| [D,a] \| , \quad \forall g\in G_1, k \in \bz,
\end{align*}
for any $a\in \cl_n$, and some constants $c(\rho)$, $c(\beta)>0$. An even stronger assumption could be 
$$
\displaystyle \| [D,\rho_g^{(n)}(a)] \| = \| [D,a] \|, 
$$ 
for any $a\in \cl_n$ and $g\in G_n$. 
\end{remark}

The aim of this section is to describe the spectral triple on $\ca_{G_n}$ in terms of the spectral triple on $\ca_{G_0}$. Before proceeding, we observe that as in \eqref{unitary-cov-crossed} we may define a family of unitary operators $v_i: \ch \otimes \mathbb{C}^{2^{\ceil{p/2}}} \otimes \ell^2(G_i) \to \ch \otimes \mathbb{C}^{2^{\ceil{p/2}}} \otimes \ell^2(G_{i-1}) \otimes \ell^2(G_i/G_{i-1})$.

\begin{theorem}
Set $\ch_0:= \ch \otimes \mathbb{C}^{2^{\ceil{p/2}}} \otimes \ell^2(G_{0})$. Then the Dirac operator $D_{n}$ acting on $\ch \otimes \mathbb{C}^{2^{\ceil{p/2}}} \otimes \ell^2(G_{n})$  gives rise to the operator $\widehat{D}_n$ when the Hilbert space is identified with  $\ch_0 \otimes\bigotimes_{i=1}^n \ell^2(G_i/G_{i-1})$ as above,  where $G_i/G_{i-1}\cong \widehat{\bz_B}$. 
The Dirac operator $\widehat{D}_{n}$ 
has the following form:
\[
\widehat{D}_{n} := V_nD_{n}V_{n}^* = D_{0} \otimes I^{\otimes n} + C_n,
\]
with $C_n\in \cb(\ch_0) \otimes 	M_\molt(\bc)^{\otimes n}$  defined, for $\eta\in \ell^2(G_0 \times G_1/G_0 \times \ldots \times G_n/G_{n-1} ; \ch \otimes \mathbb{C}^{2^{\ceil{p/2}}})$, as 
$$
(C_n\eta)(m,j_1,\ldots , j_n) := \sum_{h=1}^n (I\otimes \ell_h(s_h(j_h)))(\eta(m,j_1,\ldots , j_n)), 
$$
and $V_n: \ch \otimes \mathbb{C}^{2^{\ceil{p/2}}}  \otimes \ell^2(G_n) \to \ch_0 \otimes \bigotimes_{j=1}^n \ell^2(G_j/G_{j-1})$ given by $V_n:= (v_1\otimes \bigotimes_{j=1}^{n-1} I)\circ (v_2\otimes \bigotimes_{j=1}^{n-2} I)\circ \cdots \circ v_n$.
\end{theorem}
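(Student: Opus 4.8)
The plan is to mirror the proof of Theorem~\ref{thm:tripleOnTn}: first verify the formula for $n=1$ by a direct computation with the unitary $v_1$, and then obtain the general case by conjugating one factor $v_h$ at a time. Throughout I use that $v_i$ is the exact analogue of the unitary $V$ of \eqref{unitary-cov-crossed} --- it is the identity on $\ch\otimes\bc^{2^{\ceil{p/2}}}$ and acts on $\ell^2(G_i)$ by $(v_i\xi)(m,j)=\xi(m+s_i(j))$, $m\in G_{i-1}$, $j\in G_i/G_{i-1}\cong\widehat{\bz_B}$, with $s_i:\widehat{\bz_B}\to G_i$ the section defined as in Section~\ref{ST-CS} --- and that the matrix-valued length function $g\mapsto\sum_\mu g_\mu\eps^{(p+1)}_{\mu+1}$ is \emph{linear}, so that $\ell_i$ restricts to $\ell_{i-1}$ on $G_{i-1}$ and $\ell_i(m+s_i(j))=\ell_{i-1}(m)+\ell_i(s_i(j))$ for $m\in G_{i-1}$.

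For $n=1$, write $D_1=D\otimes\eps^{(p+1)}_1\otimes I+I\otimes M_{\ell_1}$ and treat the two summands separately. The first is the identity on $\ell^2(G_1)$, so conjugation by $v_1$ merely relabels that factor and produces $D\otimes\eps^{(p+1)}_1\otimes I\otimes I$ on $\ch\otimes\bc^{2^{\ceil{p/2}}}\otimes\ell^2(G_0)\otimes\ell^2(G_1/G_0)$. For the second, since $M_{\ell_1}$ is multiplication by $\ell_1$,
\[
\bigl(v_1 M_{\ell_1} v_1^*\,\eta\bigr)(m,j)=\bigl(I\otimes\ell_1(m+s_1(j))\bigr)\eta(m,j)=\bigl(I\otimes\ell_0(m)\bigr)\eta(m,j)+\bigl(I\otimes\ell_1(s_1(j))\bigr)\eta(m,j),
\]
so $v_1 M_{\ell_1} v_1^*=M_{\ell_0}\otimes I+C_1$, with $C_1$ the operator block-diagonal in $j\in G_1/G_0$ whose $j$-block is $I\otimes\ell_1(s_1(j))$. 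Summing the two contributions gives $\widehat D_1=V_1 D_1 V_1^*=D_0\otimes I+C_1$, as claimed.

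For $n>1$ I would conjugate $D_n$ by $V_n=(v_1\otimes I^{\otimes n-1})\circ\cdots\circ v_n$ in that order. Conjugation by $v_n$ applies the $n=1$ identity at level $n$: it leaves $D\otimes\eps^{(p+1)}_1\otimes I$ untouched and replaces $M_{\ell_n}$ by $M_{\ell_{n-1}}\otimes I$ plus the block-diagonal term $j_n\mapsto I\otimes\ell_n(s_n(j_n))$ on $\ell^2(G_n/G_{n-1})$, so the result is $D_{n-1}\otimes I$ plus that single correction. Each subsequent conjugation by $v_h\otimes I^{\otimes n-h}$ acts only on the $\ell^2(G_h)$ tensor slot, which is disjoint from every slot already carrying a block-diagonal term; hence those terms are left invariant, and by the same identity one further summand $j_h\mapsto I\otimes\ell_h(s_h(j_h))$ is peeled off. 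After all $n$ steps what remains is $D_0\otimes I^{\otimes n}$, and the accumulated correction is exactly the operator $C_n$ of the statement, $(C_n\eta)(m,j_1,\dots,j_n)=\sum_{h=1}^n\bigl(I\otimes\ell_h(s_h(j_h))\bigr)\eta(m,j_1,\dots,j_n)$. Since each summand is $I_{\ch}\otimes I_{\ell^2(G_0)}$ tensored with an operator on $\bc^{2^{\ceil{p/2}}}\otimes\bigotimes_i\ell^2(G_i/G_{i-1})$ that is block-diagonal with entries in $M_{2^{\ceil{p/2}}}(\bc)$, and $|G_i/G_{i-1}|=\molt$, we conclude $C_n\in\cb(\ch_0)\otimes M_\molt(\bc)^{\otimes n}$.

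The computation is essentially mechanical; the one point that genuinely requires care --- the analogue of the delicate step in the preceding unitary-equivalence Proposition --- is that the length-function part splits \emph{additively} under the substitution $g=m+s_h(j)$, with no $2$-cocycle correction, precisely because each $\ell_h$ is linear. This is what makes $C_n$ a sum of block-diagonal operators rather than something involving the cocycle $\o$, in contrast with the generators $U_g$, which do pick $\o$ up, as in the computation of $VU_{m+s(\ell)}V^*$. A secondary bookkeeping point, needed only for the induction to close, is to check that at stage $h$ the unitary $v_h$ acts on a tensor slot disjoint from every slot already carrying a block-diagonal term, so that those terms are genuinely unaffected.
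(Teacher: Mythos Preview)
Your proof is correct and follows essentially the same approach as the paper: both compute the $n=1$ case directly by applying $V_1$ and splitting $\ell_1(m+s(j))=\ell_0(m)+\ell_1(s(j))$ via linearity, then obtain the general case by iteration. Your write-up is in fact a bit more explicit than the paper's about why the already-produced block-diagonal terms are untouched at each subsequent stage of the induction, but the underlying argument is identical.
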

\begin{proof}
For simplicity, we prove the case $n=1$, the case $n>1$ can be proved by iterating the procedure.
For any $\eta \in \ch \otimes \mathbb{C}^{2^{\ceil{p/2}}} \otimes \ell^2(G_{0}) \otimes \ell^2(G_1/G_0) \cong  \ell^2(G_0\times (G_1/G_0); \ch \otimes \mathbb{C}^{2^{\ceil{p/2}}} )$, we get, for $m\in G_0$, $j\in G_1/G_0$,
\begin{align*}
(V_1D_{1} & V_1^*\eta)(m,j) = (D_1V_1^*\eta)(m+s(j)) \\
& = (D \otimes \eps^{(p+1)}_1)(V_1^*\eta)(m+s(j)) +  (I \otimes \ell_1(m+s(j)))(V_1^*\eta)(m+s(j)) \\
& = (D \otimes \eps^{(p+1)}_1)(\eta(m,j)) + (I \otimes \ell_1(m+s(j)))(\eta(m,j)) \\
& = (D \otimes \eps^{(p+1)}_1 + I \otimes \ell_1(m) )(\eta(m,j)) + ( I\otimes \ell_1(s(j)) )(\eta(m,j)) \\
& = (D_{0} \eta) (m,j) + (C_1\eta)(m,j),
\end{align*}
where $(C_1\eta)(m,j) := (I\otimes \ell_1(s(j)))(\eta(m,j))$ belongs to $I \otimes \cb(\mathbb{C}^{2^{\ceil{p/2}}}\otimes \ell^2(G_0\times G_1/G_0))$. We stress that $(C_1\eta)(m,j)$ dos not depend on $m$ because $\ell_1$ is a linear map.
\end{proof}

For any $n\in \bn_0$ and $x\in \cl_n$,  set $L_{D_n}(x):=\|[D_n,x]\|$.
An immediate consequence of the previous result is that, under a suitable assumption, these  seminorms are compatible.

\begin{corollary}
Suppose that
$$
\|[D_0, {\rm Ad}(U_g)(x)]\|=\|[D_0,x]\| \quad \forall x\in \cup_n \cl_n\;, \; \forall g\in \cup_n G_n\; .
$$
Then for any positive integer $m$, we have that
$$
L_{D_{m+1}}(x)=L_{D_{m}}(x) \quad \forall x\in \cl_m\; .
$$

\end{corollary}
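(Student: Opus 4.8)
The plan is to descend a single step, from $\ca_{G_{m+1}}$ to $\ca_{G_m}$, and to recognise $[D_{m+1},x]$, for $x\in\cl_m$, as a finite direct sum over the cosets $G_{m+1}/G_m$ of commutators of the form $[D_m,\,\cdot\,]$. First I would decompose $\ell^2(G_{m+1})=\bigoplus_{j\in G_{m+1}/G_m}\ell^2\big(G_m+s_{m+1}(j)\big)$, where $s_{m+1}\colon G_{m+1}/G_m\to G_{m+1}$ is a set-theoretic section, and identify each summand with $\ell^2(G_m)$ by translation; this is the one-step form of the unitary identifications already used in the preceding theorem (cf.\ \eqref{unitary-cov-crossed}). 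The computation in the proof of that theorem, carried out with $m$ in place of $0$ as base level, shows that under the induced identification $\ch\otimes\mathbb{C}^{2^{\ceil{p/2}}}\otimes\ell^2(G_{m+1})\cong\bigoplus_{j}\big(\ch\otimes\mathbb{C}^{2^{\ceil{p/2}}}\otimes\ell^2(G_m)\big)$ the element $x\in\cl_m$ acts in the $j$-th summand as ${\rm Ad}(U_{s_{m+1}(j)}^*)(x)$, still an element of $\cl_m$, while $D_{m+1}$ is carried to
\[
\bigoplus_{j\in G_{m+1}/G_m}\Big(D_m+I\otimes\ell_{m+1}\big(s_{m+1}(j)\big)\otimes I\Big),
\]
the extra summand acting only on the Clifford factor $\mathbb{C}^{2^{\ceil{p/2}}}$; here one uses that $\ell_{m+1}$ restricts to $\ell_m$ on $G_m$ and is additive.

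The crucial point is that $I\otimes\ell_{m+1}(s_{m+1}(j))\otimes I$ commutes with the copy of $\ca_{G_m}$ inside $\ca_{G_{m+1}}$, since the crossed-product algebra acts trivially on the Clifford factor; it is therefore a correction term that is central relative to $\ca_{G_m}$ and does not affect commutators with elements of $\cl_m$. Hence, for $x\in\cl_m$,
\[
[D_{m+1},x]\ \cong\ \bigoplus_{j\in G_{m+1}/G_m}\big[D_m,\,{\rm Ad}(U_{s_{m+1}(j)}^*)(x)\big],
\]
and, taking the norm of this finite block-diagonal operator,
\[
L_{D_{m+1}}(x)=\max_{j\in G_{m+1}/G_m}L_{D_m}\big({\rm Ad}(U_{s_{m+1}(j)}^*)(x)\big).
\]
Finally I would invoke the hypothesis: each $U_{s_{m+1}(j)}$ is $U_g$ for some $g\in\cup_nG_n$, and ${\rm Ad}(U_{s_{m+1}(j)}^*)(x)$ is again of the form covered by the hypothesis, so $L_{D_m}\big({\rm Ad}(U_{s_{m+1}(j)}^*)(x)\big)=L_{D_m}(x)$ for every $j$; the maximum above is thus $L_{D_m}(x)$, which is the assertion.

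The step that needs care is the first one: it is the proof of the preceding theorem with the base point shifted from $0$ to $m$, so one must carry along the section $s_{m+1}$, the matricial picture of the inclusion $\ca_{G_m}\subset\ca_{G_{m+1}}$, and the exact placement of the tensor factors; once the correction term is isolated as Clifford-scalar, the remainder is bookkeeping. A minor point is that the hypothesis is phrased in terms of $D_0$ while it is used here at level $m$; this is harmless, since at every level the Dirac operator has the same shape $D\otimes\eps^{(p+1)}_1\otimes I+I\otimes M_{\ell_n}$ and ${\rm Ad}(U_g)$ acts only on the $\cz$-coefficients of an element of $\cl_m$, independently of the level.
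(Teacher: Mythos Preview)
Your proof is correct and follows essentially the same route as the paper's: decompose over the cosets $G_{m+1}/G_m$, represent $x\in\cl_m$ as a block-diagonal matrix with entries ${\rm Ad}(U_{s(j)}^{*})(x)$, observe that the Clifford correction term commutes with the image of $\ca_{G_m}$, and then apply the hypothesis to each block. The paper carries this out only for $m=0$ and leaves the iteration to the reader; your remark about the hypothesis being phrased at level $0$ but needed at level $m$ is a point the paper passes over in silence.
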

\begin{proof}We give the proof for $m=0$.
As in section \ref{section-nc-covering}, the elements in $\ca_1$ may be seen as matrices with entries in $\ca_0$ acting on $\ell^2(G_1/G_0;\ch_0)$. $\ca_0$ itself is then embedded in $\ca_1$ as diagonal matrices, the matrix $M(x)$ associated with $x\in\ca_0$ being $M(x)_{kk}=(\s(k)^* x\s(k))_{kk}=(U_{-s(k)}x U_{s(k)})_{kk}=(\rho_{-s(k)}(x))_{kk}$, where the action $\rho$ has been naturally extended to $\ca_0$. $D_0\otimes I$ may as well be identified with the diagonal matrix $(D_0\otimes I)_{kk}=D_0$, therefore their commutator is the diagonal matrix $([D_0,\rho_{-s(k)}(x)])_{kk}$.
As for the commutator with the second term of $\widehat{D}_{1}$, let us describe the Hilbert space as $\ell^2(G_1/G_0; (\ch  \otimes \ell^2(G_0))\otimes \mathbb{C}^{2^{\ceil{p/2}}})$. Then both $x$ and $C_1$ act as diagonal matrices, whose entries $jj$ are $\rho_{-s(j)}(x)\otimes I$ for the first operator and $I\otimes \ell_1(s(j))$ for the second,
%
showing that the corresponding commutator vanishes.
The thesis now follows by the assumption. 
\end{proof}

\subsection{The inductive limit spectral triple}
The aim of this section is to describe the  Dirac operator on $\ca_\infty$.

\begin{theorem}
Assume $B$ is purely expanding, set $\ch_0:= \ch \otimes \mathbb{C}^{2^{\ceil{p/2}}} \otimes \ell^2(G_{0})$, $\cl= \cup_n\cl_{n}$, $\cam=\cb(\ch_0)\otimes \car$, and define the Dirac operator $\widehat{D}_{\infty}$ as follows:
\[
\widehat{D}_{\infty}:= D_{0}\otimes I_{\textrm{UHF}}+ C,
\]
where $C=\lim C_n$, $C_n=C_n^*\in \cb(\ch_0)\otimes UHF(r^\infty)$.
Then  $(\cl, \cam, \ch_0 \otimes L^2(\car,\tau), \widehat{D}_{\infty})$ is a finitely summable, semifinite, spectral triple, with the same Hausdorff dimension of $(\cl_{0}, \ch_0, D_{0})$ (which we denote by $\som$).
Moreover, the Dixmier trace $\t_\o$ of $(\widehat{D}_{\infty}^2+1)^{-\som/2}$ coincides with that of $(D_{0}^2+1)^{-\som/2}$ (hence does not depend on the generalized limit $\o$) and may be written as:
$$
\t_\o((\widehat{D}_{\infty}^2+1)^{-\som/2})=\lim_{t\to\infty}\frac1{\log t}\int_0^t \left(\m_{(D_{0}^2+1)^{-1/2}}(s)\right)^\som \,ds.
$$
\end{theorem}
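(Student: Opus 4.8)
The strategy is to verify that the hypotheses of the abstract Theorem~\ref{InductiveLimitTriple} are met by the present data, and then invoke it directly. The three things to check are: (i) the Hilbert space has the required tensor-product form $\ch_n=\ch_0\otimes(\bc^\molt)^{\otimes n}$ with $\widehat{D}_n=D_0\otimes I+C_n$; (ii) the perturbations $C_n$ are self-adjoint, lie in $\cb(\ch_0)\otimes M_\molt(\bc)^{\otimes n}\subset\cb(\ch_0)\otimes\mathrm{UHF}(\molt^\infty)$, and converge in norm to some $C\in\cb(\ch_0)\otimes\mathrm{UHF}(\molt^\infty)$; (iii) the abscissa of convergence $\som$ of $\zeta_{D_0}$ is finite and $\res_{s=\som}\bigl(\t((D_0^2+1)^{-s/2})\bigr)$ exists and is finite. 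Once these are in place, Theorem~\ref{InductiveLimitTriple} yields the semifinite, finitely summable spectral triple together with the statement on the Dixmier trace.

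\textbf{Step 1: identification of the Hilbert spaces and the form of $\widehat D_n$.} The previous theorem of this section already provides the unitaries $V_n$ identifying $\ch \otimes \mathbb{C}^{2^{\ceil{p/2}}} \otimes \ell^2(G_n)$ with $\ch_0\otimes\bigotimes_{i=1}^n\ell^2(G_i/G_{i-1})$, and shows $\widehat D_n=D_0\otimes I^{\otimes n}+C_n$ with $(C_n\eta)(m,j_1,\dots,j_n)=\sum_{h=1}^n(I\otimes\ell_h(s_h(j_h)))\eta(m,j_1,\dots,j_n)$. Since each $G_i/G_{i-1}\cong\widehat{\bz_B}$ has cardinality $\molt$, this is exactly the form $\ch_0\otimes(\bc^\molt)^{\otimes n}$ required, and $C_n$ acts only on the last $n$ tensor factors, hence belongs to $\cb(\ch_0)\otimes M_\molt(\bc)^{\otimes n}$. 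Self-adjointness of $C_n$ follows because each $\ell_h(s_h(j_h))=\sum_\m s_h(j_h)_\m\,\eps^{(p+1)}_{\m+1}$ is a real linear combination of the self-adjoint Clifford generators.

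\textbf{Step 2: norm-convergence of $C_n$.} Writing $C_n=\sum_{h=1}^n I^{\otimes h-1}\otimes\bigl(I\otimes\diag_{j\in\widehat{\bz_B}}\ell_h(s_h(j))\bigr)\otimes I^{\otimes n-h}$, it suffices to bound the norm of the $h$-th summand. As in the torus and noncommutative-torus cases, $s_h(j)\in A^{h-1}[0,1)^p$ (using $s_h=A^{h-1}s_1$), so $\max_{j\in\widehat{\bz_B}}\|s_h(j)\|\le\|A^{h-1}\|\sqrt p$, whence $\|\diag_j\ell_h(s_h(j))\|\le c\,\|A^{h-1}\|$ for a constant $c$ depending on the Clifford generators and $p$. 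By Proposition~\ref{prop-pur-exp}, purely expanding $B$ gives $\sum_h\|A^{h-1}\|<\infty$, so the telescoping sequence $C_n$ is Cauchy in norm and converges to $C=C^*\in\cb(\ch_0)\otimes\mathrm{UHF}(\molt^\infty)\subset\cb(\ch_0)\otimes\car$; in particular $\widehat D_\infty=D_0\otimes I+C$ is affiliated to $\cam=\cb(\ch_0)\otimes\car$.

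\textbf{Step 3: invoking the abstract theorem.} The abscissa of convergence of $\zeta_{D_0}$ equals $\som=d+p$ by the dimension formula already established for $D_0=D\otimes\eps^{(p+1)}_1\otimes I+I\otimes M_\ell$: since the eigenvalues of $D_0^2$ are $\l^2+\|g\|_2^2$ with $\l$ in the spectrum of $D$ and $g\in\bz^p$, one has $\zeta_{D_0}(s)=\sum_{\l,g}(\l^2+\|g\|^2+1)^{-s/2}$, which converges for $\Re s>d+p$ and whose residue at $s=d+p$ is finite (a product of the $\cz$-side residue and the Epstein-zeta residue for $\bz^p$). With all hypotheses of Theorem~\ref{InductiveLimitTriple} verified, $\cl_\infty=\cup_n\cl_n$ acts as a dense $*$-subalgebra of $\ca_\infty\subset\cam$, $[\widehat D_\infty,x]$ is bounded for each $x\in\cl_n$ because $[\widehat D_n,x]=[\widehat D_\infty,x]$ on $\cl_n$ (the perturbation $C-C_n$ commutes with $\cl_n$, as it acts on tensor factors beyond the $n$-th), and $\widehat D_\infty$ has $\t$-compact resolvent by the finite-factor argument in the proof of that theorem. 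Applying Proposition~\ref{sameRes} of the appendix gives $\res_{s=\som}\t((\widehat D_\infty^2+1)^{-s/2})=\res_{s=\som}\t((D_0^2+1)^{-s/2})$, and \cite[Thm.~4.11]{CRSS} then yields the displayed formula for the Dixmier trace of $(\widehat D_\infty^2+1)^{-\som/2}$.

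\textbf{Main obstacle.} The genuinely new point, compared with the torus cases, is controlling the sections $s_h$ for the crossed-product setting: one must check that the identity $s_h=A^{h-1}s_1$ (equivalently, the choice of the minimal section of each central extension $G_h=A^{h-1}\bz^p\supset G_{h-1}$) is compatible with the identifications $V_n$ coming from the previous theorem, so that the estimate $\|s_h(j)\|\le\|A^{h-1}\|\sqrt p$ really applies to the $C_n$ produced there. Everything else is a transcription of the arguments already carried out for $\bt^p$ and $A_\theta$.
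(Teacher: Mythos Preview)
Your proof is correct and follows essentially the same route as the paper: bound $\|C_n\|$ via the choice of section $s_h(k)\in A^{h-1}[0,1)^p$ and the summability $\sum_h\|A^{h-1}\|<\infty$ from Proposition~\ref{prop-pur-exp}, then invoke Theorem~\ref{InductiveLimitTriple}. The paper's estimate is marginally slicker---it uses the Clifford relation $\ell_n(v)^*\ell_n(v)=\|v\|^2 I$ to get $\|\ell_n(\sum_h s_h(j_h))\|=\|\sum_h s_h(j_h)\|$ directly rather than bounding summand by summand---but the content is the same.

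Two small remarks on your Step~3. First, your aside that the residue at $s=\som$ is ``a product of the $\cz$-side residue and the Epstein-zeta residue'' is not literally correct: $\zeta_{D_0}(s)=\sum_{\l,g}(\l^2+\|g\|^2+1)^{-s/2}$ does not factor. The paper sidesteps this entirely by treating the existence of $\res_{s=\som}\t((D_0^2+1)^{-s/2})$ as an implicit hypothesis (it is part of what ``Hausdorff dimension of $(\cl_0,\ch_0,D_0)$'' means here), so no argument is needed. Second, your claim that $C-C_n$ commutes with $\cl_n$ is true but for a reason slightly different from ``acts on tensor factors beyond the $n$-th'': under the embedding $\ca_{G_n}\hookrightarrow\ca_{G_{n+1}}$ the image is block-diagonal with entries $U_{s(j)}^*xU_{s(j)}$, not $x\otimes I$; commutation holds because these entries act trivially on the Clifford factor $\bc^{2^{\ceil{p/2}}}$ while $\ell_h(s_h(j))$ acts only there. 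In any case boundedness of $[C-C_n,x]$ is automatic since $C-C_n$ is bounded, which is all Theorem~\ref{InductiveLimitTriple} uses.
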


\begin{proof}
The Dirac operator $\widehat{D}_{\infty}$ is of the form $D_{0}\otimes I+C$.  First of all, we  prove that $\widehat{D}_{\infty} \widehat{\in} \cb(\ch_0)\otimes\car$ by showing that $C\in \cb(\ch_0)\otimes\car$.
This claim and the formula  follow from what has already been proved and the following argument.
Since we posed $s_{n}(k)\in A^{n-1} [0,1)^p$, by using the properties of the Clifford algebra and the linearity of $\ell_n$, we get
\begin{align*}
\|\ell_n(\sum_{h=1}^n s_{h}(j_h))\| & = \|\sum_{h=1}^n \ell_n(s_{h}(j_h))\| = \|\sum_{h=1}^n s_{h}(j_h)\|\\
& \leq \sum_{h=1}^n \| s_{h}(j_h)) \| \leq  \sqrt{p} \sum_{h=1}^n \| A^{h-1} \|, 
\end{align*}
so that
$$
\| C_n \| = \| \ell_n(\sum_{h=1}^n s_{h}(j_h))\| \leq   \sqrt{p} \sum_{h=1}^n \| A^{h-1} \|.
$$
As  $\widehat{D}_{\infty}=D_{0}\otimes I+C$, we get, by Proposition \ref{prop-pur-exp} and the estimate above, that $C$ is bounded and belongs to $\cb(\ch_0)\otimes UHF(\molt^\infty)$, while $D_{0}\widehat{\in}\cb(\ch_0)$.

Moreover, by construction, $\cl$ is a dense $*$-subalgebra of the C$^*$-algebra $\ca_\infty\subset\cam$.
The thesis follows from Theorem \ref{InductiveLimitTriple} and the above results.
\end{proof}

\begin{remark}
The inclusion $G_n\rightarrow G_{n+1}$ gives rise to inclusions $i_n:\ch_n\rightarrow \ch_{n+1}$ and $j_n:\ca_{G_n}\rightarrow \ca_{G_{n+1}}$ in such a way that\[
\begin{cases}
i_n(a\xi)=j_n(a)i_n(\xi),\\
i_n(D_n\xi)=D_{n+1}i_n(\xi),
\end{cases}
\quad a\in \ca_{G_n},\xi\in\ch_n.
\]
\end{remark}

\section{Self-coverings of UHF-algebras}

\subsection{The $C^*$-algebra, the spectral triple and an endomorphism}

We want to consider the $C^*$-algebra $UHF(\molt^\infty)$. This algebra is defined as the inductive limit of the following  sequence of finite dimensional matrix algebras:   
\begin{eqnarray*}
	M_0 & = & M_\molt(\mathbb{C})\\
	M_n & = & M_{n-1}\otimes M_\molt(\mathbb{C}) \quad n\geq 1,
\end{eqnarray*}
with maps $\phi_{ij}: M_j\to M_i$ given by $\phi_{ij}(a_i)=a_i\otimes 1$.
We denote by $\ca$ the $UHF(\molt^\infty)$ $C^*$-algebra and set $M_{-1}=\mathbb{C}1_\ca$  in the inductive limit defining the above algebra. The $C^*$-algebra $\ca$ has a unique normalized trace that we denote by $\tau$.

Now we follow \cite{Chris}.  Consider the projection $P_n:L^2(\ca,\tau)\to L^2(M_n,{\rm Tr})$, where ${\rm Tr}: M_\molt(\mathbb{C})\to M_\molt(\mathbb{C})$ is the normalized trace, and define 
\begin{eqnarray*}
	Q_n&=& P_n-P_{n-1}, \quad n\geq 0,\\
	E(x)&=&\tau(x)1_\ca\,.
\end{eqnarray*}

\begin{lemma}
The projection $Q_n: L^2(\ca,\tau)\to L^2(M_n,\tau)\ominus L^2(M_{n-1},{\rm Tr})$ ($n\geq 0$) is given by 
\begin{displaymath}
	Q_n(x_0\otimes \cdots\otimes x_n\otimes\cdots ) = x_0\otimes \cdots\otimes x_{n-1}\otimes [x_n-{\rm Tr}(x_n)1_{M_d(\mathbb{C})}] \tau(x_{n+1}\otimes \cdots ),
\end{displaymath}
where ${\rm Tr}: M_\molt(\mathbb{C})\to \mathbb{C}$ is the normalized trace. 
\end{lemma}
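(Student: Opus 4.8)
The plan is to do everything inside the infinite tensor product description of the GNS space. Since $\tau$ restricts to the normalised trace ${\rm Tr}^{\otimes(k+1)}$ on each $M_k=M_\molt(\bc)^{\otimes(k+1)}$, the Hilbert space $L^2(\ca,\tau)$ is canonically the infinite tensor product $\bigotimes_{k\ge0}\bigl(L^2(M_\molt(\bc),{\rm Tr}),\O\bigr)$ along the unit vector $\O$ represented by $1\in M_\molt(\bc)$, and under this identification a simple tensor $x_0\otimes\cdots\otimes x_N\in M_N\subset\ca$ corresponds to $\widehat{x_0}\otimes\cdots\otimes\widehat{x_N}\otimes\O\otimes\O\otimes\cdots$. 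First I would record that the $L^2$-closure of $M_n$ is exactly $\bigl(\bigotimes_{k=0}^n L^2(M_\molt(\bc),{\rm Tr})\bigr)\otimes\bigl(\bigotimes_{k>n}\bc\O\bigr)$, so that the projection $P_n$ onto it is $I^{\otimes(n+1)}\otimes\bigotimes_{k>n}e$, where $e$ is the rank-one projection of $L^2(M_\molt(\bc),{\rm Tr})$ onto $\bc\O$, explicitly $e\,\widehat{x}={\rm Tr}(x)\,\O$. Equivalently, $P_n$ is the $L^2$-continuous extension of the ${\rm Tr}$-preserving conditional expectation $\mathrm{id}_{M_n}\otimes{\rm Tr}_\infty\colon\ca\to M_n$; for $n=-1$ this recovers the extension of $E$, namely $P_{-1}=\bigotimes_{k\ge0}e$, the projection onto $\bc\widehat{1_\ca}$.

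Next, since $M_{n-1}\subset M_n$ gives $P_{n-1}\le P_n$, the operator $Q_n=P_n-P_{n-1}$ is an orthogonal projection with range $\overline{M_n}\ominus\overline{M_{n-1}}=L^2(M_n,{\rm Tr})\ominus L^2(M_{n-1},{\rm Tr})$, and subtracting the two tensor formulas yields $Q_n=I^{\otimes n}\otimes(I-e)\otimes\bigotimes_{k>n}e$ for every $n\ge0$ (the case $n=0$ using $P_{-1}=\bigotimes_{k\ge0}e$). Finally I would apply this operator to $\widehat{x_0}\otimes\cdots\otimes\widehat{x_N}\otimes\O\otimes\cdots$ with $N\ge n$: the first $n$ tensor factors are untouched, the $n$-th factor is sent to $(I-e)\widehat{x_n}=\widehat{x_n}-{\rm Tr}(x_n)\O$, and each later factor $\widehat{x_k}$ is sent to $e\,\widehat{x_k}={\rm Tr}(x_k)\O$, giving $\widehat{x_0}\otimes\cdots\otimes\widehat{x_{n-1}}\otimes\bigl(\widehat{x_n}-{\rm Tr}(x_n)\O\bigr)\otimes\O\otimes\cdots$ multiplied by $\prod_{k=n+1}^N{\rm Tr}(x_k)$. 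Translating back to $\bigcup_N M_N\subset L^2(\ca,\tau)$ and using $\prod_{k>n}{\rm Tr}(x_k)=\tau(x_{n+1}\otimes\cdots)$ gives precisely the stated formula; since the simple tensors span the dense subspace $\bigcup_N M_N$ and $Q_n$ is bounded, this settles the general case.

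If one prefers to avoid the infinite tensor product language, the same computation can be made directly: for a simple tensor $x=x_0\otimes\cdots\otimes x_N$ with $N\ge n$, check that $(x_0\otimes\cdots\otimes x_n)\prod_{k=n+1}^{N}{\rm Tr}(x_k)$ lies in $M_n$ and that $x$ minus this element is orthogonal to every simple tensor of $M_n$ — immediate from multiplicativity of $\tau$ over tensor factors — so this element equals $P_n x$; then subtract the analogous expression for $P_{n-1}x$. I do not expect a genuine obstacle here. The only points that need care are the correct identification of $P_n$ with the conditional expectation $\mathrm{id}_{M_n}\otimes{\rm Tr}_\infty$ (so that its range really is $L^2(M_n,{\rm Tr})$, whence $P_{n-1}\le P_n$), and the reduction from arbitrary vectors of $L^2(\ca,\tau)$ to the simple tensors in $\bigcup_N M_N$ by density and boundedness of $Q_n$.
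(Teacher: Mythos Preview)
Your argument is correct and is precisely the direct computation the paper has in mind: the paper's own proof consists only of the sentence ``The proof follows from direct computations.'' Your identification of $P_n$ with the trace-preserving conditional expectation $\mathrm{id}_{M_n}\otimes{\rm Tr}_\infty$, the resulting formula $Q_n=I^{\otimes n}\otimes(I-e)\otimes\bigotimes_{k>n}e$, and the reduction to simple tensors by density are exactly the details that computation entails.
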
 
\begin{proof}
The proof follows from direct computations.
\end{proof}

For any $s>1$, Christensen and Ivan (\cite{Chris}) defined the following spectral triple for the algebra $UHF(\molt^\infty)\stackrel{def}{=} \ca$ 
\begin{eqnarray*} 
	(\cl, L^2(\ca,\tau),D_0=\sum_{n\geq 0} \molt^{ns}Q_n )
\end{eqnarray*}
where $\cl$ is the algebra consisting of the elements of $\ca$ with bounded commutator with $D_0$. It was proved that for any such value of the parameter $s$, this spectral triple induces a metric which defines a topology equivalent to the weak$^*$-topology on the state space (\cite[Theorem 3.1]{Chris}).

Introduce the endomorphism of $\ca$ given by the right shift, $\a(x)=1\otimes x$. Then, according to \cite{Cuntz}, we may consider the inductive limit $\ca_\infty=\varinjlim \ca_n$ with $\ca_n=\ca$ as described in \eqref{ind-lim-diag}. 
As in the previous sections, we have the following isomorphic inductive family: $\ca_i$  is defined as
\begin{eqnarray*}
		\ca_0&=& \ca,\\
		\ca_n &=& M_\molt(\mathbb{C})^{\otimes n} \otimes \ca_0,\\
		\ca_\infty &=& \varinjlim \ca_i
\end{eqnarray*}
 and the embedding is the inclusion.

We want to stress that this case cannot be described within the framework considered in  section \ref{NC-coverings}. In fact, it would be necessary to exhibit a finite abelian group that acts trivially on $1_{M_r(\mathbb{C})}\otimes \bigotimes_{i=1}^\infty M_r(\mathbb{C})$ and that has no fixed elements in $M_r(\mathbb{C})\otimes 1_{\bigotimes_{i=1}^\infty M_r(\mathbb{C})}$. However, since all the automorphisms of $M_r(\mathbb{C})$ are inner, there cannot be any such group.

\subsection{Spectral triples on covering spaces of UHF-algebras}

Each algebra $\ca_p$  has a natural Dirac operator (the one considered earlier) 
\begin{eqnarray*} 
	(\cl^p, H=L^2(\ca_p,\tau),D_p=\sum_{n\geq -p} \molt^{ns}Q_n ),
\end{eqnarray*} 
where $\cl_p$ is the algebra formed of the elements of $\ca_p$ with bounded commutator.

\subsubsection*{The spectral triple on  $\ca_1$}

We are going to describe the Dirac operator on the first covering.

\begin{lemma}
Let $\xi_1\otimes \xi_\infty\in L^2(\ca,\tau)$, $\xi_1\in M_r(\bc)$, we have that
\begin{eqnarray*}
	Q_n(\xi_1\otimes \xi_\infty)=\left\{
 	\begin{array}{cc}
 		(1\otimes Q_{n-1})(\xi_1\otimes \xi_\infty) & \textrm{ if $n>0$} \\ 
 		(F\otimes Q_{n-1})(\xi_1\otimes \xi_\infty) & \textrm{ if $n=0$},
 	\end{array}
 	\right.
\end{eqnarray*}
		where $F: M_\molt(\mathbb{C})\to M_\molt(\mathbb{C})^\circ$ is defined by $F(x)=x-tr(x)$, and $M_\molt(\mathbb{C})^\circ$ are the  matrices with trace $0$.
\end{lemma}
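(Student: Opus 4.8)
The plan is to verify the identity on a dense set of elementary tensors and then extend by continuity, exploiting the self-similar decomposition $\ca\cong M_\molt(\bc)\otimes\ca$ (so that $L^2(\ca,\tau)\cong L^2(M_\molt(\bc),\mathrm{Tr})\otimes L^2(\ca,\tau)$) together with the explicit formula for $Q_n$ from the previous Lemma. Concretely, I would write a generic basis vector as $\xi_1\otimes x_1\otimes x_2\otimes\cdots$ with $x_i=1$ for all but finitely many $i$, identify $\xi_\infty$ with $x_1\otimes x_2\otimes\cdots$ regarded inside the shifted copy of $\ca$ (whose filtration has indices lowered by one relative to that of $\ca$), and adopt the convention $Q_{-1}:=P_{-1}=E$, the orthogonal projection onto $\bc 1_\ca$, i.e. $E(\xi_\infty)=\tau(\xi_\infty)1_\ca$. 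Since each $Q_n$ is an orthogonal projection, hence bounded, checking the two formulas on such tensors suffices.

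For $n>0$ the formula of the previous Lemma reads $Q_n(\xi_1\otimes x_1\otimes\cdots)=\xi_1\otimes\big(x_1\otimes\cdots\otimes x_{n-1}\otimes[x_n-\mathrm{Tr}(x_n)1]\,\tau(x_{n+1}\otimes\cdots)\big)$, and the parenthesized tail is exactly $Q_{n-1}$ applied to $x_1\otimes x_2\otimes\cdots$ within the shifted copy of $\ca$, so the right-hand side equals $(1\otimes Q_{n-1})(\xi_1\otimes\xi_\infty)$. For $n=0$ there is nothing to the left of $\xi_1$, so the same formula collapses to $Q_0(\xi_1\otimes x_1\otimes\cdots)=[\xi_1-\mathrm{Tr}(\xi_1)1]\,\tau(x_1\otimes x_2\otimes\cdots)=F(\xi_1)\otimes E(\xi_\infty)=(F\otimes Q_{-1})(\xi_1\otimes\xi_\infty)$, as claimed.

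An equivalent, more structural way to organize the same computation --- and the reason the $n=0$ case looks different --- is to argue at the level of conditional expectations. Under $\ca=M_\molt(\bc)^{(0)}\otimes\ca$ the finite-dimensional subalgebra $M_n$ splits as $M_\molt(\bc)^{(0)}\otimes M_{n-1}$ for $n\geq0$ (with $M_{-1}=\bc 1$), and since the conditional expectation onto a tensor subalgebra with respect to a product trace is the tensor product of the two conditional expectations, one gets $P_n=I\otimes P_{n-1}$ for $n\geq0$ and $P_{-1}=E_{M_\molt(\bc)}\otimes P_{-1}$ with $E_{M_\molt(\bc)}(x)=\mathrm{Tr}(x)1$. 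Subtracting, $Q_n=P_n-P_{n-1}=I\otimes Q_{n-1}$ for $n\geq1$, while $Q_0=P_0-P_{-1}=(I-E_{M_\molt(\bc)})\otimes P_{-1}=F\otimes Q_{-1}$. I do not anticipate a genuine obstacle here; the only point that needs care is the index bookkeeping --- reading the scalar factor $\tau(x_{n+1}\otimes\cdots)$ as the $L^2$-extension of $E$ on the shifted copy of $\ca$, and using the convention $Q_{-1}=E$ so that the $n=0$ case fits uniformly with the rest.
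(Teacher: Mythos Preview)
Your argument is correct and is exactly the kind of verification the paper has in mind: the paper states this lemma without proof, treating it as an immediate consequence of the explicit formula for $Q_n$ given in the preceding lemma. Your two derivations (the direct one via the formula on elementary tensors, and the structural one via the tensor factorisation of the conditional expectations $P_n$) are both valid and equivalent, and your convention $Q_{-1}:=P_{-1}=E$ is precisely what is needed to make sense of the $n=0$ case --- the paper uses this implicitly in the subsequent Proposition, where the $n=0$ term becomes $r^{-s}F\otimes E$.
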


\begin{proposition}
The following relation holds:	
\begin{eqnarray*}
		D_1&=& \molt^{-s}F\otimes E+I\otimes D_0.\\
	\end{eqnarray*}
\end{proposition}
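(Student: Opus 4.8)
The plan is to read off $D_1=\sum_{n\ge -1}\molt^{ns}Q_n$ by sorting the spectral projections $Q_n$ of $\ca_1=M_\molt(\bc)\otimes\ca_0$ according to which tensor legs they act on, and then to recognise the level-$(-1)$ term as $\molt^{-s}F\otimes E$ and the remaining terms as assembling into $I\otimes D_0$. So the heart of the proof is just the bookkeeping of the Christensen--Ivan filtration of $\ca_1$.

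First I would make explicit the identification $L^2(\ca_1,\tau)=L^2(M_\molt(\bc),{\rm Tr})\otimes L^2(\ca_0,\tau)$ and spell out the filtration: the adjoined matrix factor $M_\molt(\bc)$ occupies filtration level $-1$, while the factors inherited from $\ca_0$ keep the levels $0,1,2,\dots$ they had in $\ca_0$. Thus, writing $N_k$ for the $k$-th finite-dimensional subalgebra of $\ca_1$ and $N_k^{(0)}$ for that of $\ca_0$, one has $N_{-2}=\bc 1$, $N_{-1}=M_\molt(\bc)\otimes 1_{\ca_0}$ and $N_k=M_\molt(\bc)\otimes N_k^{(0)}$ for $k\ge 0$; passing to the $L^2$ projections $P_k$ gives $P_k=I\otimes P_k^{(0)}$ for $k\ge -1$ (with the convention $P_{-1}^{(0)}=E$) and $P_{-2}=E_{M_\molt}\otimes E$, where $E_{M_\molt}(x)={\rm Tr}(x)1$.

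Then the two cases of the preceding Lemma are exactly the computation of $Q_k=P_k-P_{k-1}$ for $k\ge 0$: the leg $M_\molt(\bc)$ is untouched, so $Q_k=I\otimes\bigl(P_k^{(0)}-P_{k-1}^{(0)}\bigr)=I\otimes Q_k^{(0)}$, the $Q_k^{(0)}$ being precisely the projections appearing in $D_0=\sum_{k\ge 0}\molt^{ks}Q_k^{(0)}$. For the one remaining term, $k=-1$, I would compute directly $Q_{-1}=P_{-1}-P_{-2}=(I_{M_\molt}-E_{M_\molt})\otimes E=F\otimes E$, using $(I_{M_\molt}-E_{M_\molt})(x)=x-{\rm Tr}(x)=F(x)$. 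Summing the series, $D_1=\molt^{-s}Q_{-1}+\sum_{k\ge 0}\molt^{ks}Q_k=\molt^{-s}(F\otimes E)+I\otimes\bigl(\sum_{k\ge 0}\molt^{ks}Q_k^{(0)}\bigr)=\molt^{-s}F\otimes E+I\otimes D_0$, which is the claim.

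The only genuinely delicate point — and the place where it is easiest to slip — is the bookkeeping of the filtration index of the newly adjoined factor: it must sit at level $-1$, not level $0$, and it is exactly this shift that both produces the coefficient $\molt^{-s}$ in front of $F\otimes E$ and leaves the remaining sum equal to $I\otimes D_0$ with no spurious factor of $\molt^{s}$. Everything else reduces to the standard identification of the projections $P_k$ on a tensor product and the elementary identity $I-E_{M_\molt}=F$ on $M_\molt(\bc)$.
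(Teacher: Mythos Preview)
Your argument is correct and is essentially the same as the paper's: both compute the projections $Q_k$ for $\ca_1$ via the tensor splitting $M_\molt(\bc)\otimes\ca_0$, obtain $Q_{-1}=F\otimes E$ and $Q_k=I\otimes Q_k^{(0)}$ for $k\ge0$, and sum. The paper verifies this pointwise on vectors $e_{ij}\otimes x$ while you work directly with the operators $P_k$, but that is only a cosmetic difference.

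One small wording issue: you say ``the two cases of the preceding Lemma are exactly the computation of $Q_k$ for $k\ge0$''. In the paper's indexing the lemma is stated for $\ca=\ca_0$, and after the index shift its case $n>0$ corresponds to your $k\ge0$ while its case $n=0$ corresponds to your $k=-1$ (the $F\otimes E$ term). Your own derivation of $Q_k=I\otimes Q_k^{(0)}$ from $P_k=I\otimes P_k^{(0)}$ is self-contained and correct, so this does not affect the proof; just tighten the cross-reference.
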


\begin{proof}
Let $e_{ij}\otimes x\in \cd(D_1)\subset L^2(\ca_1,\tau)$. We have that
\begin{eqnarray*}
	D_1(e_{ij}\otimes x)&=& \sum_{n\geq -1} \molt^{ns}Q_n(e_{ij}\otimes x)=\\
	&=& \molt^{-s}Fe_{ij}\otimes Ex+\sum_{n\geq 0} \molt^{ns}e_{ij}\otimes (Q_{n} x)=\\
	&=& [\molt^{-s}F\otimes E+I\otimes D_0](e_{ij}\otimes x).
\end{eqnarray*}
The thesis follows by linearity.
\end{proof}

\subsubsection*{The spectral triple on $\ca_n$ and the inductive limit spectral triple}

In this section we will consider the Dirac operators on $\ca_n$ and $\ca_\infty$.

\begin{theorem}
	The Dirac operator $D_{n}$ has the following form 
	\begin{equation}
		D_{n}=I^{\otimes n}\otimes D_0 + \sum_{k=1}^n \molt^{-sk} I_\molt^{n-k}\otimes F\otimes E.
	\end{equation}
\end{theorem}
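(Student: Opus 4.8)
The plan is to prove the formula by induction on $n$, peeling off one matrix factor at a time and reducing to the already established case $n=1$ (the preceding Proposition, $D_1=\molt^{-s}F\otimes E+I\otimes D_0$). The only real content is to understand the projections $Q_m$ on $\ca_n=M_\molt(\bc)^{\otimes n}\otimes\ca_0$ under the splitting $\ca_n=M_\molt(\bc)\otimes\ca_{n-1}$, where the displayed factor $M_\molt(\bc)$ is the newly added smallest factor, sitting at level $-n$ of the UHF filtration of $\ca_n$, so that the filtration of $\ca_{n-1}$ (levels $-(n-1),\dots,0,1,\dots$) is exactly the restriction of that of $\ca_n$.

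First I would record, as a straightforward generalization of the Lemma above, the behaviour of $Q_m^{(\ca_n)}$ under this splitting: for $m>-n$ the projection does not see the new factor and acts as $Q_m^{(\ca_n)}=I_\molt\otimes Q_m^{(\ca_{n-1})}$ with the same index, whereas at the boundary level $m=-n$ one has $Q_{-n}^{(\ca_n)}=F\otimes E$. The latter is seen directly from $Q_{-n}=P_{-n}-P_{-n-1}$: here $P_{-n}$ is the conditional expectation onto the $L^2$ of the single matrix algebra at level $-n$ and $P_{-n-1}$ is the projection onto scalars, so on the first tensor factor $Q_{-n}$ acts as $x\mapsto x-\tr(x)=F(x)$ and on the remaining factors it projects onto $\bc1$, i.e.\ it is $E$ (the role played by $E$ in the Lemma is here played by $Q_{-1}=P_{-1}$, the scalar projection of $\ca_{n-1}$, which we identify with $E$). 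This is the same kind of elementary computation used to prove the Lemma and the $n=1$ Proposition; the only thing to be careful about is the index bookkeeping and the harmless identification of the scalar projection of $\ca_{n-1}$ with the operator $E$.

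Granting this, splitting the sum defining $D_n$ at the lowest index gives the recursion
\[
D_n=\sum_{m\geq-n}\molt^{ms}Q_m^{(\ca_n)}=\molt^{-ns}F\otimes E+I_\molt\otimes\!\!\sum_{m\geq-(n-1)}\!\!\molt^{ms}Q_m^{(\ca_{n-1})}=\molt^{-ns}F\otimes E+I_\molt\otimes D_{n-1}.
\]
Iterating this, starting from the base case $D_1=\molt^{-s}F\otimes E+I\otimes D_0$, produces $D_n=I_\molt^{\otimes n}\otimes D_0+\sum_{j=0}^{n-1}\molt^{-(n-j)s}\,I_\molt^{\otimes j}\otimes F\otimes E$, and setting $k=n-j$ rewrites this as
\[
D_n=I_\molt^{\otimes n}\otimes D_0+\sum_{k=1}^{n}\molt^{-sk}\,I_\molt^{\,n-k}\otimes F\otimes E,
\]
which is the claim (here $I_\molt^{\,n-k}$ denotes the $(n-k)$-fold tensor power of $I_\molt$). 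Equivalently one can avoid the recursion: write out $Q_m^{(\ca_n)}=I_\molt^{\,m+n}\otimes F\otimes E$ for $-n\le m\le-1$ and $Q_m^{(\ca_n)}=I_\molt^{\otimes n}\otimes Q_m^{(\ca_0)}$ for $m\ge0$, substitute into $D_n=\sum_{m\ge-n}\molt^{ms}Q_m^{(\ca_n)}$, and put $k=-m$.

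The only genuinely delicate point is the boundary identification $Q_{-n}^{(\ca_n)}=F\otimes E$ together with the check that the UHF filtration of $\ca_n$ restricts correctly to that of $\ca_{n-1}$; once these are in place the rest is linearity and the computation already carried out for $n=1$. I do not expect any analytic difficulty, since all operators on the finite levels are bounded and the series converge exactly as in \cite{Chris}.
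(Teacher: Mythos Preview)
Your proposal is correct and essentially matches the paper's approach: both rest on the identification $Q_m=I^{\otimes(m+n)}\otimes F\otimes E$ on $\ca_n$ and substitute this into the defining sum for $D_n$. The paper carries this out directly (your ``equivalently one can avoid the recursion'' paragraph is precisely their computation, after the reindexing $k\mapsto k-n$), whereas your primary presentation packages the same identity as the recursion $D_n=\molt^{-ns}F\otimes E+I_\molt\otimes D_{n-1}$; the content is identical.
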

\begin{proof}

Let $x\in \cd(D_{n})\subset L^2(\ca_{n},\tau)$. We have that
\begin{align*}
	D_{n}x & =  \sum_{k\geq 0} \molt^{(k-n)s}Q_k x =  \sum_{k\geq 0} \molt^{(k-n)s} (I^{\otimes k}\otimes F \otimes E) x \\
	&=  \sum_{k=0}^{n-1} \molt^{(k-n)s} (I^{\otimes k}\otimes F \otimes E) x + \sum_{k\geq n} \molt^{(k-n)s} (I^{\otimes k}\otimes F \otimes E) x \\
	&=  \sum_{h=1}^{n} \molt^{-sh} (I^{\otimes (n-h)}\otimes F \otimes E) x + (I^{\otimes n} \otimes D_0) x.
\end{align*}
\end{proof}

\begin{corollary}
		The Dirac operator $D_{\infty}$ has the following form
	\begin{equation}
		D_{\infty} = I_{-\infty,-1} \otimes D_0 +\sum_{k=1}^\infty \molt^{-sk} I_{-\infty,-k-1}\otimes F\otimes E,
	\end{equation}
	where $I_{-\infty,k}$ is the identity on the factors with indices in $[-\infty,k]$.
\end{corollary}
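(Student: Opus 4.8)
The plan is to obtain $D_\infty$ by passing to the limit $p\to\infty$ in the family $D_p=\sum_{n\ge -p}\molt^{ns}Q_n$, using the closed formula for $D_n$ established in the preceding Theorem. First I would fix the inductive picture on the Hilbert spaces: the embedding $\ca_n=M_\molt(\bc)^{\otimes n}\otimes\ca_0\hookrightarrow\ca_{n+1}=M_\molt(\bc)^{\otimes(n+1)}\otimes\ca_0$ is the left shift $x\mapsto 1\otimes x$, so it induces an isometry $i_n\colon\ch_n=L^2(\ca_n,\tau)\hookrightarrow\ch_{n+1}=L^2(\ca_{n+1},\tau)$; under these identifications $\ch_\infty=L^2(\ca_\infty,\tau)$ becomes the GNS space of $\bigotimes_{i\in\bz}M_\molt(\bc)$ with respect to its trace, the factor of index $i\ge 0$ coming from $\ca_0$ and that of index $-n$ from the $n$-th shift. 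Since the shift merely relabels the Christensen--Ivan filtration by one step, $Q_m$ on $\ca_{n+1}$ restricts on $i_n(\ch_n)$ to $Q_m$ on $\ca_n$; comparing with the formula for $D_{n+1}$, which differs from $1\otimes D_n$ only by the single extra summand $\molt^{-s(n+1)}F\otimes E$ of norm at most $\molt^{-s(n+1)}$, one gets $i_n(D_n\xi)=D_{n+1}i_n(\xi)$ on the natural core, exactly as in the Remark closing Section~4. Hence the $D_n$ assemble into a densely defined self-adjoint operator $D_\infty=\sum_{n\in\bz}\molt^{ns}Q_n$, where the $Q_n$, $n\in\bz$, are the mutually orthogonal projections furnished by the two-sided analogue of the Lemma above.

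It then remains to rewrite this series by splitting it at $n=0$. For $n\ge 0$ the projection $Q_n$ acts as the identity on every factor of negative index, whence $\sum_{n\ge 0}\molt^{ns}Q_n=I_{-\infty,-1}\otimes D_0$. For $n=-k$ with $k\ge 1$, the two-sided Lemma gives $Q_{-k}=I_{-\infty,-k-1}\otimes F\otimes E$, with $F$ on the factor of index $-k$ and $E$ on the factors of index $\ge -k+1$, so that $\sum_{k\ge 1}\molt^{-sk}Q_{-k}=\sum_{k\ge 1}\molt^{-sk}\,I_{-\infty,-k-1}\otimes F\otimes E$. This last series converges in operator norm, since $E$ is a projection and $F$ the difference of the identity and a rank-one projection, hence $\|I_{-\infty,-k-1}\otimes F\otimes E\|\le 1$, while $\sum_{k\ge 1}\molt^{-sk}<\infty$ because $\molt^{-s}<1$ for $s>1$. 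Summing the two contributions yields the stated formula.

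The only step demanding real care is the first: checking coherence of the family $\{D_n\}$ with the inclusions $i_n$ and identifying the resulting limit operator with $\sum_{n\in\bz}\molt^{ns}Q_n$. One must keep track of how the unit vector representing $1_{M_\molt(\bc)}$ sits inside $\ch_{n+1}$, and note that the positive-index tail of the series is unbounded, so that $D_\infty$ is only densely defined; it decomposes as $I_{-\infty,-1}\otimes D_0$, which inherits from $D_0$ the $\tau$-compact-resolvent property, plus the bounded self-adjoint perturbation $\sum_{k\ge 1}\molt^{-sk}I_{-\infty,-k-1}\otimes F\otimes E$ --- precisely the shape needed for the (left-handed analogue of the) inductive-limit construction in Theorem~\ref{InductiveLimitTriple}. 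Once this is in place, the remainder is the same tensor-index bookkeeping as in the proof of the preceding Theorem.
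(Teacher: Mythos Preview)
Your approach is correct and is essentially the only natural one: the paper offers no proof of this Corollary, regarding the formula as immediate from the preceding Theorem by letting $n\to\infty$. Your write-up supplies the details the paper omits (coherence of the $D_n$ under the inclusions $i_n$, bilateral relabelling, norm convergence of the tail).

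One small imprecision worth fixing: when you argue that $i_n(D_n\xi)=D_{n+1}i_n(\xi)$, you note that $D_{n+1}$ differs from $1\otimes D_n$ by the single extra summand $\molt^{-s(n+1)}F\otimes E$ and cite its small norm. The norm bound is irrelevant for \emph{exact} coherence; what makes the identity hold on the nose is that $F(1_{M_\molt})=1-{\rm Tr}(1)\cdot 1=0$, so the extra term annihilates $i_n(\ch_n)=1\otimes\ch_n$. With that observation the compatibility is clear, and the rest of your argument (splitting $\sum_{n\in\bz}\molt^{ns}Q_n$ at $n=0$, identifying $Q_{-k}=I_{-\infty,-k-1}\otimes F\otimes E$, and the geometric estimate $\sum_{k\ge1}\molt^{-sk}<\infty$) goes through as written.
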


\begin{theorem} 
Set $\cl= \cup_n\cl_n$, $\cam = \car \otimes \cb(L^2(\ca_{0},\tau))$. Then the triple $(\cl,\cam,L^2(\car, \tau)\otimes L^2(\ca_0, \tau),D_{\infty})$ is a finitely summable, semifinite, spectral triple, with Hausdorff dimension $\frac{2}{s}$.
Moreover, the Dixmier trace $\t_\o$ of $(D_{\infty}^2+1)^{-1/s}$ coincides with that of $(D_{0}^2+1)^{-1/s}$ (hence does not depend on $\o$) and may be written as:
$$
\t_\o((D_{\infty}^2+1)^{-1/s})=\lim_{t\to\infty}\frac1{\log t}\int_0^t\left(\m_{(D_{0}^2+1)^{-1/2}}(s)\right)^{\frac{2}{s}}\,ds.
$$
\end{theorem}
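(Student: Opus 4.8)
The plan is to reduce everything to Theorem~\ref{InductiveLimitTriple}, so the work consists in checking that the inductive system of this section, together with the operators $D_n$, satisfies its hypotheses. First I would record the structural data already obtained above: under the isomorphic inductive family $\ca_n=M_\molt(\bc)^{\otimes n}\otimes\ca_0$ with canonical inclusions, the GNS Hilbert space is $L^2(\ca_n,\t)=\ch_0\otimes(\bc^{\molt^2})^{\otimes n}$ (after the obvious reordering of tensor factors), where $\ch_0=L^2(\ca_0,\t)$, and by the theorem on $D_n$ one has $D_n=D_0\otimes I^{\otimes n}+C_n$ with $C_n=\sum_{h=1}^n\molt^{-sh}\,I^{\otimes(n-h)}\otimes F\otimes E$. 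Since $F$ and $E$ are self-adjoint projections, each summand $I^{\otimes(n-h)}\otimes F\otimes E$ is a self-adjoint projection, and — because on the $(n-h+1)$-st factor one summand carries $F=I-e$ while a summand with larger index carries the projection $e$ onto the scalars — these projections are mutually orthogonal for distinct $h$. Hence $C_n=C_n^*\in\cb(\ch_0)\otimes UHF(\molt^\infty)$ (using $UHF(\molt^{2\infty})=UHF(\molt^\infty)$) and $\|C_n\|=\sup_h\molt^{-sh}=\molt^{-s}$.

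Next I would prove the norm convergence $C_n\to C$: by mutual orthogonality, $\|C-C_n\|=\bigl\|\sum_{h>n}\molt^{-sh}\,I^{\otimes(n-h)}\otimes F\otimes E\bigr\|=\sup_{h>n}\molt^{-sh}=\molt^{-s(n+1)}\to0$, since $s>1$. Thus $C=C^*\in\cb(\ch_0)\otimes UHF(\molt^\infty)$ is bounded and $\widehat D_\infty=D_0\otimes I+C$ is affiliated with $\cam=\cb(\ch_0)\otimes\car$. Then I would pin down the spectral data of the base triple: by \cite{Chris}, $D_0=\sum_{n\geq0}\molt^{ns}Q_n$ has eigenvalues $\molt^{ns}$ with multiplicities $m_n=\dim(\operatorname{ran}Q_n)=\molt^{2n}(\molt^2-1)$, whence the zeta function $\zeta_{D_0}(z)=\t\bigl((D_0^2+1)^{-z/2}\bigr)=\sum_{n\geq0}m_n(\molt^{2ns}+1)^{-z/2}$ has abscissa of convergence $\som=2/s$; comparing with the geometric series $(\molt^2-1)\sum_n(\molt^{\,2-sz})^n=(\molt^2-1)/(1-\molt^{\,2-sz})$, whose difference with $\zeta_{D_0}$ converges locally uniformly near $z=2/s$ and is therefore holomorphic there, one obtains that $\res_{z=2/s}\zeta_{D_0}(z)=(\molt^2-1)/(s\log\molt)$ exists and is finite.

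Having verified that $\ca_n\cong\ca_0$ with canonical inclusions, that $D_n=D_0\otimes I+C_n$ with $C_n=C_n^*$ a sequence in $\cb(\ch_0)\otimes UHF(\molt^\infty)$ converging in norm to $C$, and that $\res_{z=\som}\zeta_{D_0}(z)$ exists and is finite with $\som=2/s$, I would then invoke Theorem~\ref{InductiveLimitTriple} directly; its proof does not use the precise value of the fibre dimension, so the replacement of $\molt$ by $\molt^2$ in the $\bc^{\molt}$'s is immaterial. This yields at once that $(\cl,\cam,L^2(\car,\t)\otimes L^2(\ca_0,\t),\widehat D_\infty)$ is a finitely summable, semifinite spectral triple of Hausdorff dimension $2/s$: the boundedness of $[\widehat D_\infty,f]$ for $f\in\cl_n$ follows, as in loc.\ cit., from boundedness of $[D_n,f]$ together with $\|C-C_n\|\to0$; the $\t$-compactness of the resolvent of $\widehat D_\infty$ follows from that of $D_0$ and the boundedness of $C$ in the finite factor; and the identity of Dixmier traces is the content of Theorem~\ref{InductiveLimitTriple} combined with Proposition~\ref{sameRes}, yielding
$$
\t_\o\bigl((\widehat D_\infty^2+1)^{-1/s}\bigr)=\lim_{t\to\infty}\frac1{\log t}\int_0^t\bigl(\m_{(D_0^2+1)^{-1/2}}(s)\bigr)^{2/s}\,ds.
$$
The routine parts are the tensor bookkeeping and the citation of Theorem~\ref{InductiveLimitTriple}; the only points requiring genuine checking are the two hypotheses of that theorem, namely the norm convergence $C_n\to C$ — immediate here once one observes the mutual orthogonality of the building-block projections and uses $s>1$ — and the existence and finiteness of $\res_{z=2/s}\zeta_{D_0}(z)$, which is the main obstacle, since it requires the precise eigenvalue multiplicities of the Christensen–Ivan operator $D_0$ recalled above rather than merely its abscissa of convergence.
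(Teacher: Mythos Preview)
Your proposal is correct and follows exactly the same route as the paper: reduce to Theorem~\ref{InductiveLimitTriple} after observing that $D_\infty=D_0\otimes I+C$ with $C$ bounded and self-adjoint in the UHF part. The paper's own proof is extremely terse---it simply notes that $C\in\car\otimes I$ so $D_\infty$ is affiliated with $\cam$, and then writes ``the thesis follows from Theorem~\ref{InductiveLimitTriple} and what we have seen above''---whereas you carry out explicitly the two verifications the paper leaves implicit: the norm convergence $C_n\to C$ (via the mutual orthogonality of the $Q_k$, which indeed makes $\|C-C_n\|=\molt^{-s(n+1)}$) and the existence and finiteness of $\res_{z=2/s}\zeta_{D_0}(z)$ from the Christensen--Ivan eigenvalue data. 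Two minor remarks: the convergence $\molt^{-s(n+1)}\to0$ needs only $s>0$, not $s>1$; and your observation that the fibre dimension is $\molt^2$ rather than $\molt$ (so that one is really applying Theorem~\ref{InductiveLimitTriple} with $\molt$ replaced by $\molt^2$) is a point the paper glosses over but which, as you say, is immaterial to that theorem's proof.
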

\begin{proof}
By construction, $\cl$ is a dense $*$-subalgebra of the C$^*$-algebra $\ca_\infty\subset\cam$. Since $D_{\infty} = I_{-\infty,-1} \otimes D_0 + C$, where $C\in\car\otimes I$, $D_\infty$ is affiliated to $\cam$. 

The thesis follows from Theorem \ref{InductiveLimitTriple} and what we have seen above.
\end{proof}

\section{Inductive limits and the weak$^*$-topology of their state spaces}

First of all, we recall some definitions. 
Let $(\cl , H, D)$ be a spectral triple over a unital $C^*$-algebra $\ca$. Then we can define the following pseudometric on the state space
\begin{displaymath}
	\rho_D(\phi,\psi) = \sup\{|\phi(x)-\psi(x)| : x\in \ca, L_D(x)\leq 1\}, \quad \phi, \psi \in S(\ca),
\end{displaymath}
where $L_D(x)$ is the seminorm $\Arrowvert [D,x]\Arrowvert$.

We have the following result proved by Rieffel.
\begin{theorem} (\cite{Rief})
	The pseudo-metric $\rho_D$ induces a topology equivalent to the weak$^*$-topology if and only if the ball 
	\begin{displaymath}
		B_{L_D}:=\{x\in \ca : L_D(x) \leq 1\}.
	\end{displaymath}
is totally bounded in the quotient space $\ca/\mathbb{C}1$
\end{theorem}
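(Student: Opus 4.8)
The statement is Rieffel's characterization, so the plan is to reproduce its proof. Throughout write $L=L_D$, $\rho=\rho_D$, $B=B_{L_D}$, and fix a base state $\f_0\in S(\ca)$. The first thing I would record are three elementary facts. (i) Since every state kills differences of constants, $\rho(\f,\psi)$ depends on $B$ only through its image $\bar B$ in $\ca/\bc1$. (ii) For each $a$ with $L(a)<\infty$ the evaluation $e_a:\f\mapsto\f(a)$ is $\rho$-Lipschitz with constant $L(a)$, and such $a$ are norm-dense in $\ca$; using that states have norm $1$, $\rho$-convergence therefore implies weak$^*$-convergence, so the identity map $(S(\ca),\rho)\to(S(\ca),w^*)$ is always continuous. (iii) $\rho$ has finite diameter iff $\bar B$ is norm-bounded in $\ca/\bc1$: one direction is immediate, and the other uses that for self-adjoint $a$ one has $\sup_\f\f(a)-\inf_\psi\psi(a)=\mathrm{diam}\,\mathrm{sp}(a)$, together with the standard decomposition into real and imaginary parts.

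For the ``if'' direction, assume $\bar B$ is totally bounded in $\ca/\bc1$. Then $\rho$ is finite by (iii). Next I would show $(S(\ca),\rho)$ is totally bounded: given $\eps>0$, pick representatives $a_1,\dots,a_N$ whose classes form an $\eps/3$-net in $\bar B$, bounded in norm by some $R$; the image of $S(\ca)$ under $\f\mapsto(\f(a_i))_i\in\bc^N$ is norm-bounded, hence totally bounded, so finitely many states $\f_1,\dots,\f_M$ are $\eps/3$-dense for the sup-metric on $\bc^N$; a three-$\eps$ estimate — splitting an arbitrary $a\in B$ as $a_i$ plus an element of small quotient-norm and using $\norm{\f-\f_j}\le2$ — shows $\{\f_j\}$ is $\eps$-dense for $\rho$. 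Then $(S(\ca),\rho)$ is complete: a $\rho$-Cauchy sequence is pointwise Cauchy on a dense subspace by (ii), hence weak$^*$-convergent to some $\f\in S(\ca)$ by weak$^*$-closedness of $S(\ca)$, and letting the second index tend to $\infty$ in the Cauchy estimate yields $\rho$-convergence. Thus $(S(\ca),\rho)$ is compact, and the continuous bijection of (ii) onto the Hausdorff space $(S(\ca),w^*)$ is a homeomorphism.

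For the ``only if'' direction, assume $\rho$ induces the weak$^*$-topology. Then $(S(\ca),\rho)$ is compact, in particular of finite diameter, so $\bar B$ is bounded by (iii). I would then run an Arzel\`a--Ascoli argument: map $\bar a\in\bar B$ to the function $F_a:\f\mapsto\f(a)-\f_0(a)$ on the compact metric space $(S(\ca),\rho)$. Each $F_a$ is $1$-Lipschitz for $\rho$, so the family is equicontinuous (here is where $\rho$ metrizing the topology enters), and uniformly bounded by the finite diameter; moreover a short computation with the infimum defining the quotient norm gives $\norm{\bar a}_{\ca/\bc1}\le\norm{F_a}_\infty\le2\norm{\bar a}_{\ca/\bc1}$. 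By Arzel\`a--Ascoli the set $\set{F_a:\bar a\in\bar B}$ is relatively compact, hence totally bounded, in $C(S(\ca))$ with the sup-norm, and pulling back through this bi-Lipschitz map shows $\bar B$ is totally bounded in $\ca/\bc1$.

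The main obstacle I expect is the bookkeeping in the ``only if'' direction: verifying the two-sided comparison between the quotient norm $\norm{\bar a}_{\ca/\bc1}$ and $\norm{F_a}_\infty$, and keeping the constant-shift normalization consistent throughout. In the ``if'' direction the only delicate point is that the three-$\eps$ total-boundedness estimate must be carried out at the level of $\ca/\bc1$, since $a-a_i$ is controlled only modulo constants; everything else — continuity of the identity map, completeness — is routine once facts (i)--(iii) are in place.
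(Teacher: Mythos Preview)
The paper does not prove this theorem; it is quoted from Rieffel and used as a black box in Section~6. Your proposal is a correct reconstruction of the standard argument, and all the structural steps --- the continuity of the identity map via density of the Lipschitz elements, the three-$\eps$ total boundedness, the completeness via weak$^*$-compactness, and the Arzel\`a--Ascoli return --- are sound.

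One small constant to adjust in the ``only if'' direction: the inequality $\norm{\bar a}_{\ca/\bc1}\le\norm{F_a}_\infty$ holds as stated for self-adjoint $a$, but for general $a$ you only get $\norm{\bar a}\le 2\norm{F_a}_\infty$ after splitting $a=a_1+ia_2$ and using $F_{a_1}=\mathrm{Re}\,F_a$, $F_{a_2}=\mathrm{Im}\,F_a$. (A nilpotent in $M_2(\bc)$ shows the factor $2$ is necessary.) This is harmless: any two-sided comparison suffices to pull total boundedness back through $\bar a\mapsto F_a$, and you already flagged this step as the one needing care.
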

If the above condition is satisfied, the seminorm $L_D$ is said a \emph{Lip-norm on $\ca$}. In our examples we determined a semifinite spectral triple on $\ca_\infty$.
Our aim is to prove that the  seminorm $L_{\widehat{D}_\infty}$, restricted to $\ca_n$, is a Lip-norm equivalent to $L_{D_n}$, for any $n$, while it is not a Lip-norm on the whole inductive limit  $A_\infty$. 
Therefore, the pair $(\ca_\infty,L_{\widehat{D}_\infty})$ is not a quantum compact metric space, whilst $\ca_\infty$  is topologically compact (i.e. it is a unital C$^*$-algebra).

\begin{theorem}
Consider the Dirac operators $\widehat{D}_\infty$ determined in the previous sections. Then the sequence of  the normic radii of the balls $B_{L_{D_n}}$  diverges. In particular, the seminorm $L_{\widehat{D}_\infty}$ on the inductive limit  is not Lipschitz.
\end{theorem}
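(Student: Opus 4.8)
The quantity to control is the radius of the ball $B_{L_{D_n}}=\{x\in\ca_n:L_{D_n}(x)\le1\}$ in the quotient normed space $\ca_n/\bc1$, i.e. $R_n:=\sup\{\|x\|_{\ca_n/\bc1}:x\in\ca_n,\ L_{D_n}(x)\le1\}$. The plan is to prove $R_n\to\infty$ in each of the families built in the previous sections, and then to deduce from the theorem of Rieffel recalled above that $L_{\widehat D_\infty}$ cannot be a Lip-norm, using that $L_{\widehat D_\infty}$ restricts to $L_{D_n}$ on $\ca_n$.

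For the three ``torus-type'' examples (Section \ref{Tori-cov} and the two sections following it) the idea is to locate inside $\ca_n$ a distinguished commutative C$^*$-subalgebra isometrically isomorphic to $C(\bt_n)$, $\bt_n=\br^p/B^n\bz^p$, on which $L_{D_n}$ equals, up to a fixed positive constant, the Lipschitz seminorm of the flat metric, at least on self-adjoint elements. In the classical case $\ca_n=C(\bt_n)$ and there is nothing to do; for the rational rotation algebras the centre $Z(\ca_n)$ is exactly $C(\bt_n)$ and $-i\sum_a\eps^a\otimes\partial^a$ acts on a real central function $f$ through $\nabla f$, so $L_{D_n}(f)=\|\nabla f\|_\infty$; for the crossed product the C$^*$-algebra generated by $\{U_g:g\in G_n=A^n\bz^p\}$ is $C^*_\lambda(G_n)\cong C(\widehat{G_n})$, and Pontryagin duality identifies $\widehat{A^n\bz^p}$ with $\br^p/B^n\bz^p=\bt_n$ because $(A^n\bz^p)^\perp=B^n\bz^p$, while $[M_{\ell_n},U_g]=\ell_n(g)U_g$ with $\|\ell_n(g)\|=\|g\|_2$ (by the Clifford relations) again exhibits $L_{D_n}$ as a multiple of the flat Lipschitz seminorm on this copy of $C(\bt_n)$. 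Since a C$^*$-subalgebra is isometrically embedded, $\|\cdot\|_{\ca_n/\bc1}$ restricts to $\|\cdot\|_{C(\bt_n)/\bc1}$, so $R_n$ is bounded below by a constant times $\sup\{\mathrm{osc}(f):f\ \text{real},\ \mathrm{Lip}(f)\le1\}=\mathrm{diam}(\bt_n)$. The diameter is attained by $f=d(\,\cdot\,,B^n\bz^p)$, whose oscillation is the covering radius $\mu(B^n\bz^p)$; from $\mu(L)\ge\tfrac12\lambda_1(L)$ (the midpoint of a shortest lattice vector is at distance $\lambda_1/2$ from $L$) and $\lambda_1(B^n\bz^p)\ge\sigma_{\min}(B^n)=\|B^{-n}\|^{-1}=\|A^n\|^{-1}$ one gets $R_n\ge\mathrm{const}\cdot\|A^n\|^{-1}$, which diverges by Proposition \ref{prop-pur-exp} since $B$ is purely expanding.

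For the UHF example I would argue directly from $D_n=I^{\otimes n}\otimes D_0+\sum_{k=1}^n\molt^{-sk}I_\molt^{\otimes(n-k)}\otimes F\otimes E$. Taking $y=\diag(1,-1,0,\dots,0)\in M_\molt(\bc)$ and $x=y\otimes1^{\otimes(n-1)}\otimes1_{\ca_0}\in\ca_n$, every summand of $D_n$ except the one with $k=n$ is the identity on the first tensor factor and hence commutes with $x$, while the $k=n$ term gives $[D_n,x]=\molt^{-sn}[F,L_y]\otimes E$, so $L_{D_n}(x)\le2\molt^{-sn}$. Since $\|x\|_{\ca_n/\bc1}=\|y\|_{M_\molt(\bc)/\bc1}=1$, the element $\tfrac12\molt^{sn}x$ belongs to $B_{L_{D_n}}$ and has quotient norm $\tfrac12\molt^{sn}$, whence $R_n\ge\tfrac12\molt^{sn}\to\infty$ (recall $s>1$ and $\molt\ge2$).

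For the ``in particular'' statement, the explicit formulas show that in every case $\widehat D_\infty-(\widehat D_n\otimes I)$ acts only on the tensor factors outside $\ca_n$ and therefore commutes with $\ca_n$; hence $[\widehat D_\infty,a]=[\widehat D_n,a]$ for $a\in\ca_n$, and since $\widehat D_n$ is unitarily equivalent to $D_n$ we obtain $L_{\widehat D_\infty}|_{\ca_n}=L_{D_n}$. Consequently $B_{L_{\widehat D_\infty}}\supseteq\bigcup_n B_{L_{D_n}}$ has infinite radius in $\ca_\infty/\bc1$, so it is not totally bounded, and Rieffel's theorem then yields that $L_{\widehat D_\infty}$ is not a Lip-norm. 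I expect the only genuinely delicate point to be the middle step: checking, in each of the three torus-type models, that $L_{D_n}$ restricts to an honest positive multiple of the flat Lipschitz seminorm on the distinguished copy of $C(\bt_n)$ — the Clifford-algebra normalisations, and the duality identification in the crossed-product case, are where one must be careful; the remaining estimates are routine.
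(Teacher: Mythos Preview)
Your proposal is correct, and in the UHF case and in the ``in particular'' step it is essentially the paper's argument (you are actually more explicit than the paper about why $L_{\widehat D_\infty}|_{\ca_n}=L_{D_n}$). In the three torus-type cases, however, you take a genuinely different route.

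The paper, for the commutative torus, the rational rotation algebras, and the crossed products, simply writes down a single explicit witness in each $\ca_n$: the unitary $x_n=e^{2\pi i(A^ne_1,t)}$ in the first two cases (using that the centre of the rational torus is $C(\bt_n)$) and $x_n=U_{A^ne_1}$ in the crossed-product case. One line of calculus gives $L_{D_n}(x_n)\le c\,\|A^n\|\to0$, and since $\mathrm{sp}(x_n)=\bt$ the quotient norm of $x_n$ is exactly $1$; dividing by the seminorm produces elements of $B_{L_{D_n}}$ with diverging quotient norm. No identification of $L_{D_n}$ with the full Lipschitz seminorm on $C(\bt_n)$ is needed, and no lattice geometry enters.

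Your route is more structural: you isolate the copy of $C(\bt_n)$ (the whole algebra, the centre, or $C^*(\{U_g\})$, respectively), argue that $L_{D_n}$ restricted there equals a fixed multiple of the flat Lipschitz seminorm, and then bound $R_n$ below by the Riemannian diameter of $\bt_n$, which you control via $\mu(B^n\bz^p)\ge\tfrac12\lambda_1(B^n\bz^p)\ge\tfrac12\|A^n\|^{-1}$. This works, and the step you flag as delicate is indeed verifiable: in the crossed-product case one checks via Fourier transform on $\ell^2(G_n)$ that for real $f\in C(\bt_n)$ the commutator becomes multiplication by $(2\pi)^{-1}\sum_\mu\varepsilon^{(p+1)}_{\mu+1}\partial_\mu f$, whose norm is $(2\pi)^{-1}\|\nabla f\|_\infty$. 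The trade-off is clear: the paper's argument is shorter and avoids the seminorm identification altogether by working with a single Fourier mode, while yours explains \emph{why} the radii diverge in metric terms (the covering tori get arbitrarily large) and would adapt immediately to any other Dirac operator whose restriction to the distinguished $C(\bt_n)$ is Lip-equivalent to the flat metric.
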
 
\begin{proof}
Our aim is to show that $B_{L_{\widehat{D}_\infty}}$ is  unbounded. Actually, we will exhibit a sequence  in $B_{L_{\widehat{D}_\infty}}$ with constant seminorm and diverging quotient norm, which means that it is   an unbounded set in $\varinjlim \ca_k/\bc$.

 In the first place we consider the cases of the commutative and noncommutative torus.
The noncommutative rational torus has centre isomorphic to the algebra of continuous functions on the torus. Thus, it is enough to exhibit a sequence only in the case of the torus.

Consider the following sequence
\begin{displaymath}
	x_k = e^{2\pi i (A^{k}e_1, t)}
\end{displaymath}
where $A:=(B^T)^{-1}$.

Each $x_k\in C(\bt_k)\subset \lim_i C(\bt_i)$.
We have that
\begin{align*}
	 [D_k,x_k]  & =  \sum_a \eps^a [\partial_a ,x_k] \leq \sum_a [\partial_a ,x_k]  \\
	&=  \sum_a  \partial_a (x_k) = \sum_a   2\pi i ( A^{k}e_1,e_a ) x_k  \\
	&\leq     2p\pi  \| A^{k}e_1 \| \leq    2p\pi  \| A^k\|  \to 0
\end{align*}
where we used  Proposition \ref{prop-pur-exp}.

Consider the sequence $y_k:=x_k/ \|[D_k,x_k]\|$. This sequence has constant seminorm $L_{\widehat{D}_\infty}\equiv L_{D_k}$. Since each element $x_k$ has spectrum $\bt$, then the quotient norm of $x_k$ is equal to $\|x_k\|$ and thus the sequence $\{y_k\}$ is unbounded.

We now consider the case of the crossed products. 
With the same notations as above, consider the following sequence
\begin{displaymath}
	x_k= U_{A^k e_1}
\end{displaymath}
Each $x_k\in \ca_k\subset \varinjlim \ca_i$.
We have that
$$
\Arrowvert [D_k,x_k] \Arrowvert =\| [M_{\ell_k}, U_{A^k e_1}]\| \leq  \sup_g |\ell_k(g) - \ell_k(g-A^k e_1)| \leq  \Arrowvert A^{k} e_1 \Arrowvert \leq \|A^k\| \to 0.
$$
Since ${\rm sp}(x_k)=\bt$, again the sequence $y_k:=x_k/ \|[D_k,x_k]\|$ has constant seminorm $L_{\widehat{D}_\infty}\equiv L_{D_k}$ and increasing quotient norm.

Finally we take care of the UHF-algebra.
Consider any  matrix $b\in \big(M_\molt(\mathbb{C}) \setminus \mathbb{C}I\big)\subset UHF(\molt^\infty)$. 
 We define the following sequence
\begin{displaymath}
	x_n = I_{[-\infty,-n-1]} \otimes b\otimes I_{[-n+1,+\infty]},
\end{displaymath}
where with the above symbol we mean that the matrix $b$  is in the position  $-n$ inside an infinite bilateral product where each factor is labelled  by an integer. 
A quick computation shows that
$$
[Q_k,x_n]=\left\{\begin{array}{ll}
 		0 & \textrm{ if $k>-n$}\\
		{\rm id}_{-\infty, k-1}\otimes (b {\rm Tr}(\cdot)-{\rm Tr}(b\cdot ))\otimes \t & \textrm{ if $k=-n$} \\
		{\rm id}_{-\infty, k-1}\otimes F\otimes \big(\bigotimes_{i=k+1}^{-n-1}{\rm Tr}(\cdot)\big)\otimes({\rm Tr}(b\cdot )-b {\rm Tr}(\cdot))\otimes \t & \textrm{ if $k<-n$.}\\
 		\end{array}
 		\right.	
$$ 
This means that $[D_\infty, x_n]=\sum_{k\leq -n}r^{ks}[Q_k, x_n]$. 

We observe that each $x_n$ has non-zero seminorm. In fact,
\begin{eqnarray*}
		\Arrowvert[D_\infty, x_n]\Arrowvert &= & \sup_{\|\xi\|=1}\Arrowvert[D_\infty, x_n]\xi\Arrowvert\\
	&\geq &  \Arrowvert[D_\infty, x_n]x_n^*\Arrowvert\\
	&=& r^{-ns} \| {\rm Tr}(bb^*)-b {\rm Tr}(b^*)\|>0
\end{eqnarray*}
where in the last line we used that $[Q_k,x_n]x_n^*=0$ for all $k\neq -n$.
Moreover, we have that
$$
\|[D_\infty, x_n]\|\leq 2\|b\| \left(\sum_{k\leq -n}r^{ks}\right)=2\|b\|   \frac{r^{s-ns}}{1-r^s}
$$
which tends to zero as $n$ goes to infinity.

The sequence $y_k:=x_k/ \|[D_\infty,x_k]\|$ has bounded seminorm $L_{\widehat{D}_\infty}$   
and increasing quotient norm. 

We end this proof with an explanation of the second part of the statement of this Theorem. First of all, we observe that if the sequence of the normic radii of the balls $B_{L_{D_n}}$  diverges, then $B_{L_{\widehat{D}_\infty}}$ contains an unbounded subset with unbounded quotient norm. Therefore, since a compact subset is bounded, the ball $B_{L_{\widehat{D}_\infty}}$ cannot be compact.
\end{proof}

In a recent paper (\cite{LP})   Latr\'emoli\`ere and Packer studied the metric structure of noncommutative solenoids, namely of the inductive limits of quantum tori. In particular, they considered noncommutative tori as quantum compact  metric spaces and proved that their inductive limits, seen as quantum compact metric spaces, are also  limits in the sense of Gromov-Hausdorff propinquity (hence quantum Gromov-Hausdorff) of the inductive families. In our setting the inductive limit  of the quantum tori is no longer a quantum compact  metric space. The different result is a consequence of the different metric structure considered.  Latr\'emoli\`ere and Packer described the inductive limit as a  twisted group $C^*$-algebra on which there is an ergodic action of $G_\infty:=\varprojlim \bt$, and according to  Rieffel (\cite{Rief}) a continuous length function on $G_\infty$ gives rise to a Lip-seminorm. In our setting the seminorm may  also be described in the same way, however the corresponding length  function is unbounded, thus not continuous. We give an explicit description of this situation in a particular example. 

\begin{example}
Consider the two-dimensional rational rotation algebra $A_\theta$, with $\theta=1/3$. With the former notation, set
$$
B=\left( \begin{array}{cc}
2 & 0\\
0 & 2
\end{array} \right),
$$
and define the morphism $\alpha: A_\theta \to A_\theta$ by $\alpha(U)=U^2$, $\alpha(V)=V^2$. Now we may consider the inductive limit $\varinjlim \cb_n$ where $\cb_n=A_\theta$ (see \eqref{ind-lim-diag}). We observe that this case also fits in the setting of  Latr\'emoli\`ere and Packer (see  \cite[Theorem 3.3]{LP}). Then, there exists a length function  that induces the seminorm $L_{\widehat{D}_\infty}$.
\end{example}
\begin{proof}
Consider the standard length function on the circle $\zcl(e^{2\pi i t}):=|t|$ for $t\in (-1/2,1/2]$. There is an induced length function on $\bt^2$, namely $\ell_0(z_1,z_2):=\max\{\zcl(z_1),\zcl(z_2)\}$.
We define the following length function $\ell(g):=\sup_n 2^n\ell_0(g_n)$ on the direct product $\prod \bt^2$, thus by restriction also on the projective limit $G_\infty:=\varprojlim \bt$ (with respect to the  projection $\pi\equiv \alpha^*: \bt^2 \to \bt^2$, $\pi((z_1,z_2))=(2z_1,2z_2)$). 
For any $\phi\in \mathbb{R}^2$ we define the following action on $A_\theta$: $\widetilde{\rho}_\phi(f)(t):=f(t+3\phi)$. 
 Since $\theta=1/3$, $\widetilde{\rho}$ is the identity on $A_\theta$ when $\phi\in\bz^2$, hence there is an induced action of $\bt^2=\mathbb{R}^2/\bz^2$ on $A_\theta$. We denote this action with $\rho$. 
There is a naturally induced action $\rho^\infty$ of the group $\prod_{i=0}^\infty \bt^2$ on $\prod_{i=0}^\infty A_\theta$ given by $\rho^\infty_g(f_0,f_1,\ldots):=(\rho_{g_0}(f_0),\rho_{g_1}(f_1),\ldots)$ for any $g\in \prod_{i=0}^\infty \bt^2$ and any $(f_0,f_1,\ldots)\in \prod_{i=0}^\infty A_\theta$. We now check that the restriction of this action to $G_\infty$ gives rise to an action on $\varinjlim A_\theta$. It is enough to prove the claim on the algebraic inductive limit ${\rm alg-}\varinjlim A_\theta$. Let $(f_0,f_1,\ldots)\in {\rm alg-}\varinjlim A_\theta$. By definition there exists  $n\in \mathbb{N}$ such that $f_{n+i}=\alpha^i(f_n)$ for all $i\in \mathbb{N}$. For any $g\in G_\infty$, we have that
$$
\rho_{g_{n+i}}(f_{n+i})=\rho_{g_{n+i}}(\alpha^i (f_n))=\alpha^i (\rho_{\pi^n(g_{n+i})}(f_n))=\alpha^i (\rho_{g_n}(f_n)).
$$

For any $g\in G_\infty$ and any $X\in \varinjlim A_\theta$ we define the following seminorm
$$
L_{\rho^\infty,\ell}(X):=\sup_{g\in G_\infty}  \frac{\|\rho^\infty_g(X)-X\|}{\ell(g)}.
$$
Any element $f_n\in \cb_n$ embeds into $\varinjlim A_\theta$ as $X=(\underbrace{0,\ldots , 0}_n,f_n, \alpha(f_n), \alpha^2(f_n)\ldots)$. We have that
\begin{align*}
L_{\rho^\infty,\ell}(X) & =\sup_{g\in G_\infty}  \frac{\|\rho^\infty_g(X)-X\|}{\ell(g)}\\
& = \sup_{g\in G_\infty}  \limsup_i \frac{\| \alpha^i(f_n)(z+3g_{n+i})- \alpha^i(f)(z)\|}{\ell(g)}\\
& = \sup_{g\in G_\infty}  \frac{\|f_n(z+3g_{n})- f_n ( z)\|}{\ell_0(g_n)} \frac{\ell_0(g_n)}{\ell(g)}\\
& =  \left(\sup_{g_n} \frac{\|f_n(z+3g_{n})- f_n ( z)\|}{\ell_0(g_n)}\right)  \left( \sup_{g\in G_\infty} \frac{\ell_0(g_n)}{\ell(g)}\right) \\
& = \frac{L_0(f)}{2^n},
\end{align*}
where the last two equalities  hold because, for any $g_n\in \bt^2$, we may find a sequence $g=\{g_i\}$ such that $\ell(g)=2^n\ell_0(g_n)$ (if $g_n=e^{2\pi i t}$ for $t\in(-1/2,1/2]$ consider $g_{n+k}=e^{2\pi i t /2^{k}}$) and
$L_0$ is the Lipschitz seminorm $\sup_{h\in \bt^2} \frac{\|f(z+h)- f (z)\|}{\ell_0(h)}$, which is equivalent to $L_{D_0}$ (see \cite{Rief}).
Denote by $\varphi_n: \cb_n=A_\theta\to \ca_n$ the natural isomorphism  given by $\varphi_n(W(m,t)):=e^{2\pi i \theta (2^{-n} m,t)} W_0(2^n m)$ (cf. \eqref{UBVB}) and  consider the following seminorm on $\cb_n$: $L_n(x):=L_{D}(\varphi_n(x))=\|[D_n,\varphi_n(x)]\|$. 
Since the seminorm $L_{D}$ is expressed in terms of the norm of some linear combinations of the two derivatives,  one has that $L_n(x)=2^{-n}L_0(x)$. Therefore, the former computation leads to $L_{\rho^\infty,\ell}=L_n$, when restricted to $\cb_n$.
\end{proof}

\appendix
\section{Some results in noncommutative integration theory}

Let $(\cam,\t)$ be a von Neumann algebra with a f.n.s. trace,  $T\widehat{\in}\cam$  a self-adjoint operator. 
We use the notation $e_T(\O)$ for the spectral projection of  $T$ relative to the measurable set $\O\subset \br$, and $\l_T(t):=\t(e_{|T|}[t,+\infty))$, $\mu_T(t):=\inf \{s: \lambda_T(s)\leq t\}$, for a $\t$-compact operator $T$.

\begin{lemma}\label{varie}
Let $(\cam,\t)$ be a von Neumann algebra with a f.n.s. trace,  $T\widehat{\in}\cam$  a self-adjoint operator, such that $\Lambda _{T}(s):=\t(e_{T}(-s,s))<\infty$ for any $s>0$. Then
\item[$(1)$] $\Lambda _{T}(s)=\sup\{\t(e):\|Te\|<s, e\in\text{Proj}(\cam)\}$, $s>0$,
\item[$(2)$] if $C\in\cam_{sa}$, and $c:=\|C\|$, then $\t(e_{T+C}(-s,s))<\infty$ for any $s\geq0$, and 
$\Lambda _{T+C}(s)\leq\Lambda_{T}(s+c)$,

\item[$(3)$] if $e_{T}(\{0\})=0$, $T^{-1}$ is $\t$-compact and $\Lambda _{T}(s)=\l_{|T|^{-1}}(s^{-1})$, $s>0$.
\end{lemma}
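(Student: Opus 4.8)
The plan is to prove the three assertions in order, deducing (2) from (1) and handling (3) by elementary functional calculus together with the standing hypothesis $\Lambda_{T}(s)<\infty$ for $s>0$.

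For (1), one inequality is immediate from normality of $\t$: for $0<s'<s$ the spectral projection $e_{T}(-s',s')$ satisfies $\|Te_{T}(-s',s')\|\le s'<s$, and $e_{T}(-s',s')\uparrow e_{T}(-s,s)$ as $s'\uparrow s$, so $\Lambda_{T}(s)=\sup_{s'<s}\t(e_{T}(-s',s'))\le\sup\{\t(e):\|Te\|<s\}$. For the reverse inequality I would fix a projection $e$ with $\|Te\|<s$, set $f:=e_{T}(-s,s)$ and $f^{\perp}:=1-f$, and note that since $f,f^{\perp}$ commute with $T$ and $f^{\perp}=e_{T}(\{|t|\ge s\})$, for every $\xi\in\operatorname{Ran}e$ we have $\|T\xi\|^{2}=\|Tf\xi\|^{2}+\|Tf^{\perp}\xi\|^{2}\ge s^{2}\|f^{\perp}\xi\|^{2}$, while $\|T\xi\|=\|Te\xi\|\le\|Te\|\,\|\xi\|<s\|\xi\|$; hence $\|f^{\perp}\xi\|<\|\xi\|$ for every nonzero $\xi\in\operatorname{Ran}e$, that is, $fe$ is injective on $\operatorname{Ran}e$, equivalently $e\wedge f^{\perp}=0$.

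The step I expect to be the main obstacle is turning this into the trace inequality $\t(e)\le\t(f)$. I would use the polar decomposition $fe=v\,|fe|$ inside $\cam$: the relation $e\wedge f^{\perp}=0$ forces $\ker(fe)=\operatorname{Ran}(1-e)$, hence the support projection of $|fe|=(efe)^{1/2}$ equals $e$, so that $v^{*}v=e$ while $vv^{*}\le f$; by the Murray--von Neumann comparison theory for the faithful normal semifinite trace, $\t(e)=\t(vv^{*})\le\t(f)=\Lambda_{T}(s)$, and taking the supremum over such $e$ completes (1). The only delicate point here is checking that $\operatorname{supp}(efe)=e$, which is exactly where $e\wedge f^{\perp}=0$ is used.

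For (2), I would apply (1) to $T+C$: any projection $e$ with $\|(T+C)e\|<s$ also satisfies $\|Te\|\le\|(T+C)e\|+\|Ce\|<s+c$, so $\t(e)\le\Lambda_{T}(s+c)$ by (1) applied to $T$; taking the supremum yields $\Lambda_{T+C}(s)\le\Lambda_{T}(s+c)$, which is finite by hypothesis (the degenerate cases $c=0$, and $s=0<c$ where $\Lambda_{T+C}(0)=0$, being trivial), and this gives both claims at once. For (3), if $e_{T}(\{0\})=0$ then $T$ is injective, $T^{-1}$ is self-adjoint, and $|T^{-1}|=|T|^{-1}$; the spectral mapping theorem for $t\mapsto 1/t$ on $(0,\infty)$ identifies the spectral projections of $|T|^{-1}$ for large argument with those of $|T|$ near $0$, so that $\t(e_{|T|^{-1}}[s^{-1},\infty))$ equals $\t(e_{|T|}[0,s))=\t(e_{T}(-s,s))=\Lambda_{T}(s)$; in particular $\lambda_{|T|^{-1}}(t)=\Lambda_{T}(1/t)<\infty$ for every $t>0$ by the standing hypothesis, which is precisely the statement that $T^{-1}$ is $\t$-compact, and reading the identification with $t=s^{-1}$ gives $\Lambda_{T}(s)=\lambda_{|T|^{-1}}(s^{-1})$.
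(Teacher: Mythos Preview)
Your proof is correct and follows essentially the same strategy as the paper: for (1) both arguments establish $e\wedge f^{\perp}=0$ (with $f=e_{T}(-s,s)$) and then deduce $\t(e)\le\t(f)$, and (2), (3) are handled identically. The only cosmetic difference is the mechanism for the Murray--von Neumann comparison in (1): the paper argues by contradiction via the Kaplansky parallelogram law $e=e-e\wedge f^{\perp}\sim e\vee f^{\perp}-f^{\perp}\le f$, whereas you exhibit the subequivalence $e\preceq f$ directly through the polar decomposition of $fe$.
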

\begin{proof}
$(1)$ Indeed,
$$
a:=\t(e_{T}(-s,s))=\sup\{\t(e_{T}(-\s,\s)):0\leq\s<s\}\leq\sup\{\t(e):\|Te\|<s\}.
$$
Assume, by contradiction, there is $e\in\text{Proj}(\cam)$ such that $\t(e)>a$ and $\|Te\|<s$. For $\xi\in e\ch\cap e_{|T|}[s,\infty)\ch$, $\norm{\xi}=1$, we have $(\xi,T^*T\xi)<s^2$ and $(\xi,T^*T\xi)\geq s^2$, namely $ e\wedge e_{|T|}[s,\infty)=\{0\}$. As a consequence,
$$
e_{|T|}[s,\infty)=e_{|T|}[s,\infty) - e\wedge e_{|T|}[s,\infty) \sim e\vee  e_{|T|}[s,\infty) -e\leq I-e
$$
where $\sim$ stands for Murray - von Neumann equivalence. Passing to the orthogonal complements we get $a= \t(e_{T}(-s,s))\geq \t(e)>a$, which is absurd.

\item[$(2)$] Set $\Omega_{T,s}=\{e\in\text{Proj}(\cam):\|Te\|<s\}$; since $\|Te\|\leq\|(T+C)e\|+c$,  we have that $\Omega_{T+C,s}\subseteq\Omega_{T,s+c}$, . The thesis follows from $(1)$.

\item[$(3)$] A straightforward computation shows that $e_{|T|^{-1}}(s,+\infty)=e_T(-1/s,1/s)$. Therefore $T^{-1}$ is $\t$-compact  \cite{FaKo} and the equality follows.
\end{proof}

\begin{lemma}\label{sameResA}
Let $(\cam,\t)$ be a von Neumann algebra with a f.n.s. trace,  $T\widehat{\in}\cam$  a positive self-adjoint operator $T$, with $\t$-compact resolvent,  $\som,t>0$. Then, the following are equivalent

\item[$(1)$] exists $\res_{s=\som} \ \t(T^{-s}e_T[t,+\infty))=\a\in\br$, 

\item[$(2)$] exists $\res_{s=\som} \ \t((T^2+1)^{-s/2})=\a\in\br$.
\end{lemma}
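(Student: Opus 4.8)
The plan is to reduce the statement to a single fact: writing $\zeta_1(s):=\t(T^{-s}e_T[t,+\infty))$ and $\zeta_2(s):=\t\big((T^2+1)^{-s/2}\big)$, each holomorphic on the half-plane of absolute convergence of its defining expression, I will show that $\zeta_2-\zeta_1$ extends holomorphically across $s=\som$. Once this is established, $(1)\Leftrightarrow(2)$ and the equality of the residues follow immediately, since adding a function holomorphic at $\som$ alters neither the existence nor the value of a residue there. (Here $\som$ is understood to be the common abscissa of convergence of $\zeta_1$ and $\zeta_2$, which is the only value at which the residue condition is substantive; the fact that the two abscissae agree is an elementary comparison, since on the range of $e_T[t,+\infty)$ one has $(T^2+1)^{-s/2}\leq T^{-s}\leq (1+t^{-2})^{s/2}(T^2+1)^{-s/2}$ for real $s\geq0$.)

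The first step is to split $I=e_T[0,t)+e_T[t,+\infty)$, giving
\[
\zeta_2(s)=\t\big((T^2+1)^{-s/2}e_T[0,t)\big)+\t\big((T^2+1)^{-s/2}e_T[t,+\infty)\big).
\]
Because $T\geq0$ has $\t$-compact resolvent, $e_T[0,t)$ has finite trace, so the first summand equals $\int_0^t(\l^2+1)^{-s/2}\,d\m(\l)$ for a finite Borel measure $\m$ on $[0,t)$; as $(\l^2+1)^{-s/2}$ is entire in $s$ and locally bounded uniformly for $\l\in[0,t]$, this term is entire. For the second summand I use that $T\geq t>0$ on the range of $e_T[t,+\infty)$, so there $(T^2+1)^{-s/2}=T^{-s}(1+T^{-2})^{-s/2}$, and hence
\[
\t\big((T^2+1)^{-s/2}e_T[t,+\infty)\big)=\zeta_1(s)+R(s),\qquad R(s):=\t\Big(T^{-s}\big[(1+T^{-2})^{-s/2}-1\big]e_T[t,+\infty)\Big).
\]

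It remains to show $R$ is holomorphic near $s=\som$. The key estimate is the elementary bound $|(1+x)^{-s/2}-1|\leq C_K\,x$, valid for $x\in[0,t^{-2}]$ and $s$ in any compact $K\subset\bc$ (it follows from $(1+x)^{-s/2}-1=-\tfrac s2\int_0^x(1+u)^{-s/2-1}\,du$). Representing $R$ via the scalar spectral measure of $T$ on $[t,+\infty)$ (equivalently, using the functional calculus of $T$ together with $|\t(AB)|\leq\|A\|\,\t(|B|)$ for trace-class $B$), this yields $|R(s)|\leq C_K\,\t\big(T^{-\Re s-2}e_T[t,+\infty)\big)$, finite with locally uniform control whenever $\Re s>\som-2$. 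Therefore the integral defining $R$ converges absolutely and locally uniformly on $\{\Re s>\som-2\}$, so $R$ is holomorphic there; combining the three steps shows $\zeta_2-\zeta_1$ is holomorphic near $s=\som$, which is what was needed.

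The step I expect to require the most care — though it is routine — is the justification that holomorphy may be passed through the spectral integral defining $R$ (Morera together with Fubini, or differentiation under the integral sign, using the dominating bound above). The only subtlety is to confirm that the standard manipulations with the semifinite trace $\t$ used along the way — commutativity of the functional calculus of $T$, the inequality $|\t(AB)|\leq\|A\|\,\t(|B|)$, and the identity $\t\big(g(T)e_T[t,+\infty)\big)=\int_t^\infty g\,d\m$ for $g\geq0$ — are all legitimate in the semifinite von Neumann setting, for which I would cite \cite{FaKo}.
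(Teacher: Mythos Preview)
Your proof is correct and follows essentially the same route as the paper's: both split off the harmless $e_T[0,t)$ contribution (finite trace, hence entire/bounded), and on $e_T[t,+\infty)$ both use the key estimate $(1+x)^{-s/2}-1=O(x)$ to control the difference by $\t(T^{-s-2}e_T[t,+\infty))=\zeta_1(s+2)$. The only difference is one of framing: the paper works entirely with real $s\to\som^+$ and shows $|\zeta_1(s)-\zeta_2(s)|$ stays bounded there (which suffices, since ``residue'' here means $\lim_{s\to\som^+}(s-\som)\zeta(s)$), whereas you push the same bound to complex $s$ and obtain holomorphy of $\zeta_2-\zeta_1$ on $\{\Re s>\som-2\}$, a slightly stronger conclusion than needed.
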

\begin{proof}
Let us first observe that
\begin{align}
\t(T^{-s}e_T[t,+\infty))&=\int_t^{\infty}\l^{-s}d\t(e_T(0,\l)),\label{integr-1}
\\
\t((T^2+1)^{-s/2})& = \int_0^{\infty}(\l^2+1)^{-s/2}d\t(e_T(0,\l)),\label{integr-2}
\end{align}
and
\begin{align*}
	(t^2+1)^{-s/2} & \leq(\l^2+1)^{-s/2} \leq 1, \quad \forall \l\in [0,t], \\
 t^s(1+t^2)^{-s/2} \l^{-s} & \leq (\l^2+1)^{-s/2}  \leq \l^{-s} , \quad \forall \l\in [t,+\infty), 
\end{align*}
therefore the finiteness of any of the two residues in the statement implies the finiteness of the two integrals \eqref{integr-1}, \eqref{integr-2} above for any $s>\som$. Then,
\begin{align*}
|\t(T^{-s}e_T[t,+\infty))&-\t((T^2+1)^{-s/2})|
= \bigg| \int_t^{\infty}\l^{-s}d\t(e_T(0,\l))-\int_0^{\infty}(\l^2+1)^{-s/2}d\t(e_T(0,\l)) \bigg| \\
&\leq\int_0^{t}(\l^2+1)^{-s/2}d\t(e_T(0,\l))
+\frac{s}2\int_t^\infty\l^{-s-2}d\t(e_T(0,\l)),
\end{align*}
where the inequality follows by
$$
\l^{-s}-(\l^2+1)^{-s/2}=\l^{-s}[1-(1+\frac1{\l^2})^{-s/2}]\leq\frac{s}2\l^{-s-2},
$$
which, in turn, follows by 
$$
g(x)=1-(1+x)^{-s/2}\leq\sup_{\xi\in[0,x]}g'(\xi)\,x=\frac s2 x,\quad \text{for }x\geq 0
$$
Finally, taking the limit for $s\to \som^+$, we get
$$
\lim_{s\to \som^+}|\t(T^{-s}e_T[t,+\infty))-\t((T^2+1)^{-s/2})|
\leq\t(e_T(0,t))+\frac{\som}2\int_t^\infty\l^{-(\som+2)}d\t(e_T(0,\l))<\infty,
$$
where the last integral is \eqref{integr-1} with $s=d+2$, hence is finite, and we have proven 
 the thesis.
\end{proof}

\begin{lemma}\label{sameResB}
Let $(\cam,\t)$ be a von Neumann algebra with a f.s.n. trace, $T$ a self-adjoint  operator affiliated with $\cam$ with bounded compact inverse, $C\in\cam_{sa}$  such that $T+C$ has bounded inverse. Then, the following are equivalent

\item[$(1)$] exists $\res_{s=\som}\ \t(|T|^{-s})=\a\in\br$,

\item[$(2)$] exists $\res_{s=\som}\ \t(|T+C|^{-s})=\a\in\br$.
\end{lemma}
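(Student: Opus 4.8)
The plan is to reduce the comparison of the zeta functions $s\mapsto\t(|T|^{-s})$ and $s\mapsto\t(|T+C|^{-s})$ to a comparison of the associated eigenvalue counting functions
\[
N(\lambda):=\t\big(e_{|T|}[0,\lambda)\big),\qquad M(\lambda):=\t\big(e_{|T+C|}[0,\lambda)\big),
\]
and to show that these differ only by a bounded horizontal shift, via Lemma \ref{varie}(2). First I would collect the finiteness facts. Since $T$ has bounded inverse, $|T|\ge\eps I$ with $\eps:=\|T^{-1}\|^{-1}>0$, and since $T^{-1}$ is $\t$-compact, so is $|T|^{-1}=|T^{-1}|$; hence $N(\lambda)=\t\big(e_{|T|^{-1}}(\lambda^{-1},\infty)\big)<\infty$ for every $\lambda>0$, with $N(\lambda)=0$ for $\lambda\le\eps$. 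In particular the running hypothesis of Lemma \ref{varie} is met, so Lemma \ref{varie}(2) applied to the pair $(T,C)$ gives $M(\lambda)<\infty$ for all $\lambda\ge0$ and $M(\lambda)\le N(\lambda+c)$, where $c:=\|C\|$; applying it again to the pair $(T+C,-C)$ gives $N(\lambda)\le M(\lambda+c)$. Therefore
\[
N(\lambda-c)\le M(\lambda)\le N(\lambda+c)\qquad(\lambda>c),\qquad\text{hence}\qquad |N(\lambda)-M(\lambda)|\le N(\lambda+c)-N(\lambda-c).
\]

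Next I would pass to the zeta functions. By Fubini's theorem, on the region where $\t(|T|^{-s})<\infty$ one has the layer-cake identity $\t(|T|^{-s})=s\int_0^\infty\lambda^{-s-1}N(\lambda)\,d\lambda$, the integral effectively running over $[\eps,\infty)$; likewise for $|T+C|$ with $M$ and lower cutoff $\delta:=\|(T+C)^{-1}\|^{-1}$. Hence
\[
g(s):=\t(|T|^{-s})-\t(|T+C|^{-s})=s\int_0^\infty\lambda^{-s-1}\big(N(\lambda)-M(\lambda)\big)\,d\lambda .
\]
I would split this integral at $\lambda=3c$. The part over the compact interval $[\min(\eps,\delta),3c]$ has a bounded integrand, so it defines an entire function of $s$. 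For the tail, the two-sided bound above, the change of variables $\lambda\mapsto\lambda\mp c$, and the elementary estimate $(\mu-c)^{-s-1}-(\mu+c)^{-s-1}\le 2c\,|s+1|\,(\mu-c)^{-\operatorname{Re}s-2}$ yield, uniformly for $\operatorname{Re}s$ in any bounded interval,
\[
\Big|\int_{3c}^\infty\lambda^{-s-1}\big(N(\lambda)-M(\lambda)\big)\,d\lambda\Big|\ \le\ \mathrm{const}\cdot|s+1|\int_{3c}^\infty\lambda^{-\operatorname{Re}s-2}N(\lambda)\,d\lambda\ +\ (\text{entire terms}),
\]
and $\int_{3c}^\infty\lambda^{-\operatorname{Re}s-2}N(\lambda)\,d\lambda\le(\operatorname{Re}s+1)^{-1}\,\t\big(|T|^{-(\operatorname{Re}s+1)}\big)$, which is finite as soon as $\operatorname{Re}s>\som-1$. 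By Morera's theorem, $g$ then extends holomorphically to the half-plane $\{\operatorname{Re}s>\som-1\}$; in particular $g$ is holomorphic at $s=\som$.

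Finally I would conclude: writing $\t(|T|^{-s})=\t(|T+C|^{-s})+g(s)$ with $g$ holomorphic at $\som$, we get $(s-\som)g(s)\to0$ as $s\to\som$, so $\res_{s=\som}\t(|T|^{-s})$ exists if and only if $\res_{s=\som}\t(|T+C|^{-s})$ exists, and then the two residues coincide; this is the asserted equivalence (the whole argument is manifestly symmetric under $T\leftrightarrow T+C$, $C\leftrightarrow-C$). I expect the main obstacle to be the analytic bookkeeping of the middle two paragraphs: making the domination of the tail integral locally uniform in $s$ so that Morera (equivalently, analyticity under the integral sign) genuinely applies, justifying the layer-cake identity in the relevant strip, and keeping careful track of the half-plane $\{\operatorname{Re}s>\som-1\}$ on which every quantity is defined. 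The conceptual input is only the shift estimate $N(\lambda-c)\le M(\lambda)\le N(\lambda+c)$ furnished by Lemma \ref{varie}(2); everything else is Mellin-transform calculus.
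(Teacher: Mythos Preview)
Your argument is correct and takes a genuinely different route from the paper's.

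The paper, like you, starts from Lemma \ref{varie}(2), but then passes to singular values: it converts the shift bound on $\Lambda_T$ into $\l_{|T+C|^{-1}}(s)\le\l_{|T|^{-1}}(\frac{s}{1+cs})$, derives the multiplicative estimate $\m_{|T+C|^{-1}}(t)\le(1-\th)^{-1}\m_{|T|^{-1}}(t)$ for $t\ge\l_{|T|^{-1}}(\th/c)$, integrates via $\t(|T+C|^{-s})=\int_0^\infty\m_{|T+C|^{-1}}(t)^s\,dt$ to obtain $\t(|T+C|^{-s})\le(1-\th)^{-s}\t(|T|^{-s})+O(1)$, and concludes by a $\limsup/\liminf$ sandwich letting $\th\to0$, plus the symmetry $T\leftrightarrow T+C$.

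You instead stay with the counting functions $N,M$ and exploit the \emph{two-sided} additive shift $N(\l-c)\le M(\l)\le N(\l+c)$ directly. Via the layer-cake formula you represent the difference $g(s)=\t(|T|^{-s})-\t(|T+C|^{-s})$ as a single Mellin integral against $N-M$, and the bound $|N-M|\le N(\cdot+c)-N(\cdot-c)$ together with a mean-value estimate shows the tail is dominated by $\t(|T|^{-(\operatorname{Re}s+1)})$, finite for $\operatorname{Re}s>\som-1$. This yields more than the paper states (holomorphy of the difference on a half-plane, not merely equality of residues), and it avoids the auxiliary parameter $\th$ and the separate $\limsup/\liminf$ steps. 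The paper's approach, on the other hand, is phrased entirely in terms of generalized $s$-numbers $\m_T$, which keeps it closer to the Fack--Kosaki formalism used elsewhere in the appendix. Your acknowledged bookkeeping caveats (locally uniform domination for analyticity under the integral sign, the layer-cake identity on the relevant real half-line) are routine and do not hide any gap; for the bare statement only boundedness of $g(s)$ as $s\to\som^+$ along the reals is needed, so the Morera step is in fact optional.
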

\begin{proof} 
It is enough to prove that $(1)\implies (2)$. Set $c:=\| C\|$. From Lemma \ref{varie},
we get $\Lambda _{T+C}(s)\leq\Lambda_{T}(s+c)$ for every $s>0$, hence $\l_{|T+C|^{-1}}(s)\leq\l_{|T|^{-1}}(\frac{s}{1+cs})$. Then, for $0<\th<1$, 
\begin{align*}
\m_{|T+C|^{-1}}(t) & = \inf \{ s\geq0 : \l_{|T+C|^{-1}}(s)\leq t \} \\
& \leq \inf \{ s\geq0 : \l_{|T|^{-1}}(\frac{s}{1+cs})\leq t \}\\
&=\inf\{\frac{h}{1-ch} \geq0 : \l_{|T|^{-1}}(h)\leq t\}\\
&=\inf\{\frac{h}{1-ch} : 0\leq h <c^{-1}, \l_{|T|^{-1}}(h)\leq t\}\\
&\leq\inf\{\frac{h}{1-ch} : 0\leq h \leq\th c^{-1}, \l_{|T|^{-1}}(h)\leq t\}\\
&\leq(1-\th)^{-1}\inf\{h : 0\leq h \leq\th c^{-1}, \l_{|T|^{-1}}(h)\leq t\}\\
&=\begin{cases}
(1-\th)^{-1}\inf\{h\geq0: \l_{|T|^{-1}}(h)\leq t\}, & \text{if }\l_{|T|^{-1}}(c^{-1}\th)\leq t,\\
+\infty, &\text{otherwise,}
\end{cases}
\\
&=\begin{cases}
(1-\th)^{-1}\m_{|T|^{-1}}(t), & \text{if }\l_{|T|^{-1}}(c^{-1}\th)\leq t,\\
+\infty, & \text{otherwise.}
\end{cases}
\end{align*}
As a consequence,
\begin{align*}
\t(|T+C|^{-s})&=\int_0^\infty\m_{|T+C|^{-1}}(t)^s\,dt \\
& \leq \int_0^{\l_{|T|^{-1}}(c^{-1}\th)} \m_{|T+C|^{-1}}(t)^s \, dt + \int_{\l_{|T|^{-1}}(c^{-1}\th)}^{+\infty} (1-\th)^{-s} \m_{|T|^{-1}}(t)^s\, dt \\
&= \int_0^{\l_{|T|^{-1}}(c^{-1}\th)}\left(\m_{|T+C|^{-1}}(t)^s-(1-\th)^{-s} \m_{|T|^{-1}}(t)^s  \right)dt
+(1-\th)^{-s}\t(|T|^{-s})\\
&\leq\left(\|(T+C)^{-1}\|^s+(1-\th)^{-s}\|T^{-1}\|^s\right)\l_{T^{-1}}(c^{-1}\th)+(1-\th)^{-s}\t(|T|^{-s})
<\infty.
\end{align*}
Passing to the residues, we get $\limsup_{s\to \som^+}(s-\som)\t(|T+C|^{-s})\leq (1-\th)^{-\som} \res_{s=\som}\ \t(|T|^{-s})$, hence, by the arbitrariness of $\th$, 
$\limsup_{s\to \som^+}(s-p)\t(|T+C|^{-s})\leq \res_{s=\som}\ \t(|T|^{-s})$. Exchanging $T$ with $T+C$ we get $\res_{s=\som}\ \t(|T|^{-s})\leq \liminf_{s\to \som_+}(s-p)\t(|T+C|^{-s})$, hence the thesis. 
\end{proof}

\begin{proposition}\label{sameRes}
Let $(\cam,\t)$ be a von Neumann algebra with a f.n.s. trace, $T$ a self-adjoint  operator affiliated with $\cam$ with compact resolvent, $C\in\cam_{sa}$. Then, the following are equivalent

\item[$(1)$] exists $\res_{s=\som}\  \t((T^2+1)^{-s/2})=\a\in\br$,

\item[$(2)$] exists $\res_{s=\som}\ \t((T+C)^2+1)^{-s/2})=\a\in\br$. 

In particular, the abscissas of convergence coincide.
\end{proposition}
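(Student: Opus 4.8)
The plan is to reduce the statement to the perturbation result for inverse powers, Lemma~\ref{sameResB}, applied not to $T$ and $T+C$ directly but to the positive operators $R:=(T^2+1)^{1/2}$ and $R':=((T+C)^2+1)^{1/2}$. Both $R$ and $R'$ are self-adjoint, affiliated with $\cam$, and bounded below by $1$, so they have bounded inverses; moreover $R^{-1}=(T^2+1)^{-1/2}$ is $\t$-compact because $T$ has $\t$-compact resolvent. Since $\t((T^2+1)^{-s/2})=\t(R^{-s})$ and $\t(((T+C)^2+1)^{-s/2})=\t(R'^{-s})$, it is enough to show that $\res_{s=\som}\t(R^{-s})$ exists if and only if $\res_{s=\som}\t(R'^{-s})$ exists, and that the two values coincide; equality of the abscissas of convergence will come out of the same argument.

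The only structural input I would then have to supply by hand is a counting-function comparison for $R$ and $R'$ analogous to Lemma~\ref{varie}(2). Writing $\Lambda_T(s)=\t(e_T(-s,s))$ as in the appendix and $c=\|C\|$, one has $\Lambda_T(s)<\infty$ for all $s>0$ (again because $(T^2+1)^{-1}$ is $\t$-compact), hence $\Lambda_{T+C}(s)<\infty$ and $\Lambda_{T+C}(s)\le\Lambda_T(s+c)$ by Lemma~\ref{varie}(2). Since $R,R'\ge 1$, functional calculus gives $\Lambda_R(s)=\Lambda_{R'}(s)=0$ for $s\le 1$, while for $s>1$
\[
\Lambda_R(s)=\t\bigl(e_T(-\sqrt{s^2-1},\sqrt{s^2-1})\bigr)=\Lambda_T\bigl(\sqrt{s^2-1}\bigr),\qquad \Lambda_{R'}(s)=\Lambda_{T+C}\bigl(\sqrt{s^2-1}\bigr).
\]
Combining these with the elementary inequality $\sqrt{s^2-1}+c\le\sqrt{(s+c)^2-1}$ (which follows at once from $\sqrt{s^2-1}\le s$) and the monotonicity of $\Lambda_T$, I get $\Lambda_{R'}(s)\le\Lambda_T\bigl(\sqrt{s^2-1}+c\bigr)\le\Lambda_T\bigl(\sqrt{(s+c)^2-1}\bigr)=\Lambda_R(s+c)$ for every $s>0$.

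This inequality is exactly what the proof of Lemma~\ref{sameResB} extracts from Lemma~\ref{varie}(2) in its opening lines. Since in addition $R$ has bounded $\t$-compact inverse, $R'$ has bounded inverse, and $e_R(\{0\})=e_{R'}(\{0\})=0$, the remainder of that proof goes through verbatim with $T$ and $T+C$ there replaced by $R$ and $R'$: passing from $\Lambda_{R'}(s)\le\Lambda_R(s+c)$ to $\mu_{R'^{-1}}(t)\le(1-\theta)^{-1}\mu_{R^{-1}}(t)$ for $t$ large, integrating to bound $\t(R'^{-s})$ by $\mathrm{const}+(1-\theta)^{-s}\t(R^{-s})$, and then letting $\theta\downarrow 0$ in the residues. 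This yields finiteness of $\t(R'^{-s})$ whenever $\t(R^{-s})<\infty$ (hence, by symmetry under $T\leftrightarrow T+C$, equality of the abscissas of convergence) together with the equality of the residues at $\som$, which is the assertion.

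The point requiring care is precisely that Lemma~\ref{sameResB} cannot be quoted as a black box with ``$C$'' replaced by $R'-R$: that difference is unbounded, because the perturbation acts on $T$ and not on $T^2$. So the genuine content of the argument is the verification that the single ingredient the proof of Lemma~\ref{sameResB} actually uses, the estimate $\Lambda_{R'}(s)\le\Lambda_R(s+c)$, survives for these operators — which is the computation of the second paragraph. Everything else (the identity $\t((T^2+1)^{-s/2})=\t(R^{-s})$, the $\t$-compactness of $R^{-1}$, and the $\mu$-function and integral manipulations) is routine, the latter being already carried out in Lemma~\ref{sameResB}.
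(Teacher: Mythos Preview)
Your argument is correct and takes a genuinely different route from the paper's.

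The paper first invokes Lemma~\ref{sameResA} to replace $\t((T^2+1)^{-s/2})$ by $\t(|T|^{-s}e_{|T|}[t,\infty))$, then observes that the truncated operators $Te_{|T|}[t,\infty)$ and $(T+C)e_{|T+C|}[t,\infty)$ differ by a bounded self-adjoint $C'$ (the low-energy spectral cut-offs absorb the unboundedness), and finally applies Lemma~\ref{sameResB} as a black box to that pair. You instead work directly with $R=(T^2+1)^{1/2}$ and $R'=((T+C)^2+1)^{1/2}$, which are already boundedly invertible, and rather than invoke Lemma~\ref{sameResB} wholesale you isolate the one estimate its proof actually needs, the counting inequality $\Lambda_{R'}(s)\le\Lambda_R(s+c)$, and verify it by functional calculus combined with Lemma~\ref{varie}(2) for $T$ and $T+C$. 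The elementary bound $\sqrt{s^2-1}+c\le\sqrt{(s+c)^2-1}$ is exactly what makes the transfer work. Your approach dispenses with Lemma~\ref{sameResA} and with the truncation device, at the price of opening up the proof of Lemma~\ref{sameResB}; the paper's approach keeps Lemma~\ref{sameResB} sealed but needs the extra reduction step. Both are short; yours is arguably more direct, and it sidesteps the minor awkwardness in the paper's proof that $Te_{|T|}[t,\infty)$ is not literally invertible on the whole space. One small remark: whether $R'-R$ is genuinely unbounded is a delicate operator-Lipschitz question (the function $\sqrt{t^2+1}$ is smooth, unlike $|t|$), but your caution is well placed and in any case irrelevant to the validity of your argument, since you never use that claim.
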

\begin{proof}
By Lemma \ref{sameResA}, the thesis may be rewritten as 
$$
\exists\ \res_{s=\som}\ \t(|T|^{-s}e_{|T|}[t,+\infty)) = \a\in\br \Longleftrightarrow \exists \ \res_{s=\som}\ \t(|T+C|^{-s}e_{|T+C|}[t,+\infty))=\a.
$$
Since the operator
\begin{align*}
C' := &(T+C)e_{|T+C|}[t,+\infty) - Te_{|T|}[t,+\infty) \\
& = (T+C)e_{|T+C|}[0,+\infty) - Te_{|T|}[0,+\infty) - (T+C)e_{|T+C|}[0,t) + Te_{|T|}[0,t) \\
& = C-(T+C)e_{|T+C|}[0,t) +Te_{|T|}[0,t)
\end{align*}
is bounded and self-adjoint, we may apply Lemma \ref{sameResB} to the operators
$(T+C)e_{|T+C|}[t,+\infty)$ and $Te_{|T|}[t,+\infty)$, proving the Proposition. 
\end{proof}

\emph{Acknowledgement}
This work was supported by the following institutions:
the ERC for the Advanced Grant 227458 OACFT ``Operator Algebras and Conformal Field Theory'', the MIUR PRIN ``Operator Algebras, Noncommutative Geometry and Applications'', the INdAM-CNRS GREFI GENCO, and the INdAM GNAMPA.

\end{document}